\documentclass[12pt,reqno,a4paper]{amsart}
\usepackage[centertags]{amsmath}
\usepackage{amsfonts}
\usepackage{amssymb}
\usepackage{amsthm}
\usepackage{amsmath}
\usepackage{times}
\usepackage[top=2.5cm, bottom=2.5cm, left=2.5cm, right=2.5cm]{geometry}
\usepackage{color}
\usepackage{graphicx}
\usepackage{hyperref}
\hypersetup{
    colorlinks=true,
    linkcolor=blue,
    filecolor=magenta,
    urlcolor=cyan,
}

\setlength{\mathsurround}{2pt}

\numberwithin{equation}{section}



\newcommand{\cC}{\mathcal C}
\newcommand{\cB}{\mathcal B}
\newcommand{\cA}{\mathcal A}

\newcommand{\cH}{\mathcal{H}}

\newcommand{\cL}{\mathcal{L}}

\newcommand{\cG}{\mathcal{G}}
\newcommand{\cT}{\mathcal{T}}

\newcommand{\fM}{\mathfrak{M}}

\newcommand{\K}{\mathrm{Op}}
\newcommand{\cM}{{\mathcal M}}

\newcommand{\N}{\mathbb{N}}

\newcommand{\Z}{\mathbb{Z}}

\newcommand{\R}{\mathbb{R}}
\newcommand{\Q}{\mathbb{Q}}
\newcommand{\HD}{\mathbb{HD}}

\renewcommand{\P}{\mathbb{P}}
 
\newcommand{\hull}[1]{\mathrm{co}(#1)}

\newcommand{\atw}{\mathfrak{A}}
\newcommand{\funcforce}{\mathfrak{F}_H}
\newcommand{\func}{\mathfrak{F}}

\newcommand{\Borel}{B}

\renewcommand{\epsilon}{\varepsilon}

\newcommand{\sdist}{\tilde{\mathrm d}}
\newcommand{\dist}{\mathrm{d}}
\newcommand{\distance}{\mathrm{d}}

\newcommand{\loc}{\mathrm{loc}}

\newcommand{\map}{\Psi}

\newcommand{\strictlyincluded}{\subset\subset}

\newcommand{\res}{\mathop{\hbox{\vrule height 7pt width 0.5pt depth 0pt
\vrule height 0.5pt width 6pt depth 0pt}}\nolimits}

\newcommand{\p}{\partial}

\newcommand{\Per}{\mathop{\mathrm{Per}}}

\newcommand{\diam}{\mathop{\mathrm{diam}}}
\renewcommand{\div}{\mathop{\mathrm{div}}}

\theoremstyle{plain}
\newtheorem{theorem}{Theorem}[section]
\newtheorem{lemma}[theorem]{Lemma}
\newtheorem{definition}[theorem]{Definition}
\newtheorem{proposition}[theorem]{Proposition}
\newtheorem{corollary}[theorem]{Corollary}

\theoremstyle{definition}

\newtheorem{remark}[theorem]{Remark}

\pagestyle{plain}

\date{\today}

\begin{document}

\title[GMM for partitions]{Minimizing movements for mean curvature flow
of partitions}

\author[G. Bellettini, Sh.Yu. Kholmatov]{Giovanni
Bellettini$\!^{1,2}$, Shokhrukh Kholmatov$\!^{2,3,4}$}

\address{$^1\!$Universit\'a degli studi di Siena,
Dipartimento di Ingegneria
dell'Informazione e Scienze Matematiche, 
via Roma 56,
53100 Siena, Italy}

\email{$^{1,2}\!$bellettini@diism.unisi.it}

\address{$^2\!$International Centre for Theoretical Physics (ICTP),
Strada Costiera 11, 34151 Trieste, Italy}
\address{$\!^3\!$Scuola Internazionale Superiore di
Studi Avanzati (SISSA)\\
Via Bonomea 265, 34136 Trieste, Italy}
\address{$\!^4\!$Universit\"at Wien 
Fakult\"at f\"ur Mathematik\\
Oskar-Morgenstern-Platz, 1, 1090, Vienna, Austria}

\email{$^{2,3,4}\!$shokhrukh.kholmatov@univie.ac.at}

\begin{abstract}
We prove the existence of a weak global in time mean curvature flow 
of a bounded partition  of 
space using the method of minimizing movements. 
The result is extended to the case 
when suitable driving forces are present. 
We also prove some consistency results for a
minimizing movement solution with smooth and 
viscosity solutions when the evolution 
starts from a partition made by a union of bounded 
sets at a positive distance. In addition, the
motion starting from the union of convex sets at  a positive distance 
agrees with the classical mean curvature flow and is stable with respect to 
the Hausdorff convergence of the initial partitions.
\end{abstract}

\keywords{Mean curvature flow, partitions, minimizing movements}

\maketitle


\jot=12pt

\section{Introduction}

Mean curvature evolution  of partitions became popular in
recent years because of its applications in material science  and
physics, especially evolutions of grain boundaries and 
motion of immiscible fluid systems, 
see e.g. \cite{BKPS:1999,Br:1978,KL:2001,MNPS:2016} 
and references therein.
Behaviour of the motion in the two phase case, i.e. 
in the case of classical motion by mean curvature of a boundary 
as a gradient flow of the area functional,  
is rather well-understood,
see for instance \cite{Bell:2012,CM:book,Eck:2004,GH:86,Giga:06,Hui:84,Man:2011} 
and references therein. 

Mean curvature evolution of interfaces in the multiphase case in general involves
motion of surface junctions in $\R^n,$ or triple and multiple points in the plane,
an already nontrivial problem. 
We refer to the  survey \cite{MNPS:2016} and references therein for
recent results on curvature evolution  of planar networks. 

Not much seems to be known in higher space dimensions; 
short time existence of the motion of subgraph-type partitions  
has been derived in \cite{Fr:2010,Fr:20-2} and
 well-posedness and short time existence
of the motion by mean curvature of three surface clusters have 
been recently shown in \cite{DGK:14}.

Even in the two phase case, 
the classical flow describes the  motion only up to the 
appearance of the first singularity. In order to continue the 
motion through singularities, several notions of generalized solutions
have been suggested: Brakke varifold-solution \cite{Br:1978}, 
the viscosity solution (see \cite{Giga:06} and references therein),
the Almgren-Taylor-Wang \cite{ATW93} 
and Luckhaus-Sturzenhecker 
\cite{LS:95}  solutions, the minimal  barrier solution
(see \cite{Bell:2012} and references therein); we also
refer to \cite{ESS:1992, Il:94} for other types of solutions. 
At the moment the lack of the comparison principle  in the multiphase case  
results in a lot of difficulties to extend 
such notions as viscosity and barrier solutions,  while besides Brakke solution,
some
other generalized solutions have been successfully extended
to partitions. For example, the authors 
of \cite{LO:2016} have proved
the existence of a distributional solution  of mean curvature evolution of 
partitions on the torus using the time thresholding method introduced in
\cite{MBO:1992}, see also \cite{EO:2014,MBO:1994}; furthermore the authors 
of \cite{KT:2016} showed  the existence of a Brakke
flow. 

In \cite{DG:93} De Giorgi generalized the Almgren-Taylor-Wang and 
Luckhaus-Sturzen\-hecker approach
to what he called the minimizing movements method. 
In the present paper, we prove the existence of a generalized minimizing 
movement solution in $\P_b(N+1),$ the collection of all 
partitions of $\R^n,$ $n\ge2,$ having $N+1 \ge2$ components, 
with the first $N$-components
bounded. This is the multiphase generalization
of the evolution of a compact boundary in the two-phase case
($N=1$), for which 
the generalized  minimizing movement solution has 
been introduced and studied in \cite{ATW93,LS:95}.

Let us recall the definition (see \cite{DG:93,DG-96}, 
also \cite{LAln,AGS:05}).

\begin{definition}[\bf Generalized minimizing 
movement for partitions]\label{def:GMM}
Let $\P_b(N+1)$ be the set of all bounded $(N+1)$-partitions 
of $\R^n$ (Definition \ref{def:g_partitions}) 
endowed with the $L^1(\R^n)$-convergence, and let
$\func: \P_b(N+1)\times \P_b(N+1)\times [1,+\infty) 
\to[-\infty,+\infty]$ be defined as 
$$
\func(\cA,\cB;\lambda) = \Per(\cA) +\frac{\lambda}{2}
\sum\limits_{j=1}^{N+1} \int_{A_j\Delta B_j} 
\dist(x,\p B_j)dx,\quad\cA,\cB\in \P_{b}(N+1),
$$
where $\Per(\cA)=\frac12\sum\limits_{j=1}^{N+1} P(A_j)$ is the  
perimeter of the partition  $\cA=(A_1,\ldots,A_{N+1})$ and
$\dist(\cdot,E)$ is the distance function from 
$E\subseteq\R^n.$   We say that a map $\cM:[0,+\infty)\to \P_b(N+1)$
is a generalized minimizing movement (shortly a $GMM$) 
associated to $\func$  starting from
$\cG\in \P_b(N+1)$ and we write $\cM\in GMM(\func,\cG),$ if there exist
$\cL:[1,+\infty)\times \N_0 \to \P_b(N+1)$ and a 
diverging sequence $\{\lambda_h\}$ such that
$$
\lim\limits_{h\to+\infty} 
\cL(\lambda_h,[\lambda_ht]) = \cM(t)\quad \text{in $L^1(\R^n)$
for any $t\ge0,$}
$$
where the bounded partitions $\cL(\lambda,k),$ $\lambda\ge1,$ $k\in\N_0,$ 
are defined  inductively as $\cL(\lambda,0)=\cG$  and
$$
\func(\cL(\lambda,k+1),\cL(\lambda,k);\lambda) = 
\min\limits_{\cA\in \P_b(N+1)} \func(\cA,\cL(\lambda,k);\lambda)
\qquad \forall k \geq 0.
$$
When $GMM(\func,\cG)$ is a singleton, it is called the minimizing movement
starting from $\cG$ and denoted by $MM(\func,\cG).$  
\end{definition}

We shall also consider GMM associated to the functional
$$
\funcforce(\cA,\cB;\lambda) = 
\Per(\cA) + \sum\limits_{j=1}^{N+1} \int_{A_j} H_jdx
+\frac{\lambda}{2}\sum\limits_{j=1}^{N+1} \int_{A_j\Delta B_j} 
\dist(x,\p B_j)dx,\quad\cA,\cB\in \P_{b}(N+1) 
$$
for suitable driving forces $H_i,$ $i=1,\ldots,N+1$ 
(see Section \ref{sec:prescribed_curvature}).

Our main result is the following (see Theorems 
\ref{teo:existence_GMM} and  \ref{teo:existence_GMM_2}
for the precise 
statements): %

\begin{theorem}\label{teo:main_introduction}
For any $\cG\in \P_b(N+1),$ $GMM(\func,\cG)$ is nonempty, i.e. 
there exists a generalized minimizing movement starting 
from $\cG.$ Moreover, 
\begin{itemize}
 \item[1)] any such movement $\cM(t) = (M_1(t),\ldots, M_{N+1}(t))$ is locally 
$\frac{1}{n+1}$-H\"older continuous in time;
\item[2)] $\bigcup\limits_{j=1}^N M_j(t)$ is contained in the closed convex envelope of 
the union $\bigcup\limits_{j=1}^N G_j$ of the bounded components of $\cG$ for any $t>0.$
\end{itemize}
Finally, similar results are valid for $\funcforce.$ 
\end{theorem}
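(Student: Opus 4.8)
The plan is to establish the three assertions through the standard implicit–Euler (Almgren–Taylor–Wang) scheme adapted to partitions, treating $F$ first and then indicating the modifications for $F_H$.

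First, for existence of the incremental minimizers $w(\lambda,k+1)$, I would fix $\lambda\geq 1$ and $k\geq 0$ and minimize $\mathcal{A}\mapsto F(\mathcal{A},w(\lambda,k);\lambda)$ over $\mathcal{P}_b(N+1)$. The direct method applies: take a minimizing sequence $\mathcal{A}^{(m)}$; the perimeter term bounds $\sum_j P(A_j^{(m)})$, and since the dissipation term $\tfrac{\lambda}{2}\sum_j\int_{A_j^{(m)}\Delta B_j}\dist(x,\partial B_j)\,dx$ is finite with $B_j = w(\lambda,k)_j$ having the first $N$ components bounded, the symmetric differences $A_j^{(m)}\Delta B_j$ must be (quantitatively) confined to a bounded region, so the first $N$ components stay uniformly bounded; hence by $BV$-compactness in $L^1_{\loc}$ we extract a limit partition $\mathcal{A}^{(\infty)}\in\mathcal{P}_b(N+1)$. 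Lower semicontinuity of the perimeter of a partition under $L^1_{\loc}$ convergence and Fatou (the distance weight is nonnegative and lower semicontinuous against $L^1$ convergence of the characteristic functions) give that $\mathcal{A}^{(\infty)}$ is a minimizer. This defines $w(\lambda,\cdot)$ inductively. A comparison of $F(w(\lambda,k+1),w(\lambda,k);\lambda)$ with $F(w(\lambda,k),w(\lambda,k);\lambda)=\Per(w(\lambda,k))$ yields the fundamental energy–dissipation inequality
\[
\Per(w(\lambda,k+1)) + \frac{\lambda}{2}\sum_{j=1}^{N+1}\int_{w(\lambda,k+1)_j\Delta w(\lambda,k)_j}\dist(x,\partial w(\lambda,k)_j)\,dx \;\leq\; \Per(w(\lambda,k)),
\]
so $\Per(w(\lambda,k))$ is nonincreasing in $k$, bounded by $\Per(\mathcal{G})$, and the total dissipation is summable.

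Next, for the Hölder estimate (1), I would follow the now-classical argument: the dissipation bound $\sum_j\int_{w(\lambda,k+1)_j\Delta w(\lambda,k)_j}\dist(x,\partial w(\lambda,k)_j)\,dx$ controls, via an isoperimetric/De Giorgi-type inequality relating $\int_{E\Delta E'}\dist(x,\partial E')\,dx$ to $\|\chi_E-\chi_{E'}\|_{L^1}^{(n+2)/(n+1)}$ (using the uniform perimeter bound), the $L^1$-distance between consecutive steps by a power of the one-step dissipation; summing over the steps between times $s<t$ with Cauchy–Schwarz (or Hölder) against the discrete time increments gives $\|w(\lambda,[\lambda t])-w(\lambda,[\lambda s])\|_{L^1}\leq C(t-s)^{1/(n+1)}$ uniformly in $\lambda$. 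This equicontinuity, together with the uniform perimeter bound (hence $L^1_{\loc}$-precompactness of $\{w(\lambda,[\lambda t])\}$ for each $t$), lets me invoke a refined Ascoli–Arzelà / diagonal argument to extract a diverging sequence $\lambda_j$ along which $w(\lambda_j,[\lambda_j t])$ converges for all $t\geq 0$ to some $\mathcal{M}(t)$; thus $GMM(F,\mathcal{G})\neq\emptyset$ and $\mathcal{M}$ inherits the same local $\tfrac{1}{n+1}$-Hölder bound in time.

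For the convex-envelope confinement (2), the key observation at the discrete level is a comparison/truncation argument: if $\mathcal{C} := \mathrm{co}\big(\bigcup_{j=1}^N G_j\big)$ is the closed convex hull of the bounded components of the initial datum, then replacing a competitor $\mathcal{A}$ by the partition obtained by intersecting its bounded components with $\mathcal{C}$ (and giving the extra mass to the unbounded component $A_{N+1}$) does not increase $\Per$ — convex truncation does not increase the perimeter of a partition — and strictly decreases the dissipation term unless $\bigcup_{j\le N} A_j\subseteq\mathcal{C}$, because $\dist(x,\partial B_j)$ is integrated over the symmetric difference and $B_j\subseteq\mathcal{C}$ inductively. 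Hence each minimizer satisfies $\bigcup_{j\le N} w(\lambda,k)_j\subseteq\mathcal{C}$ for all $k$, and this containment passes to the $L^1_{\loc}$ limit, giving $\bigcup_{j\le N} M_j(t)\subseteq\mathcal{C}$. Finally, for $F_H$, the added forcing terms $\sum_j\int_{A_j}H_j\,dx$ are continuous under $L^1$-convergence when the $H_j$ are, say, bounded (or in $L^1_{\loc}$ with suitable growth), so the direct method still yields minimizers; the energy inequality acquires lower-order terms controlled by $\|H_j\|_\infty\,|w(\lambda,k+1)_j\Delta w(\lambda,k)_j|$, which are absorbed for $\lambda$ large and do not affect the Hölder exponent, and the truncation argument for (2) still works since $\int_{A_j}H_j$ only decreases (for the bounded components) or can be re-balanced into $A_{N+1}$ without increasing the total. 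The main obstacle I anticipate is the lower semicontinuity and compactness for \emph{partitions} rather than single sets — one must be careful that $L^1_{\loc}$-limits of partitions are again partitions with the prescribed boundedness of the first $N$ components, and that the partition perimeter $\Per$ (with the factor $\tfrac12$ counting each interface once) is lower semicontinuous; this is where the geometry genuinely differs from the two-phase case and where the dissipation term's role in preventing escape of mass to infinity must be used carefully.
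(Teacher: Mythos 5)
The high-level skeleton of your proposal matches the paper's strategy (implicit Euler scheme, direct method for each step, convex truncation for confinement, dissipation-based equicontinuity followed by a diagonal argument), and the truncation argument for item~2) is essentially the one the paper uses in Theorem~\ref{teo:existence_of_minimizers}. However, there is a genuine gap in the way you derive the H\"older estimate. You invoke an ``isoperimetric/De~Giorgi-type inequality'' of the form $\int_{E\Delta E'}\dist(x,\p E')\,dx \gtrsim |E\Delta E'|^{(n+2)/(n+1)}$ under a uniform perimeter bound on $E,E'$. Such an inequality is false: take $E'=B_1$ and $E=B_{1+\epsilon}$, so that $E\Delta E'$ is an annulus of width $\epsilon$; then the left side is of order $\epsilon^2$ while $|E\Delta E'|^{(n+2)/(n+1)}$ is of order $\epsilon^{(n+2)/(n+1)}$, and for $n\ge 1$ one has $(n+2)/(n+1)<2$, so the claimed bound fails as $\epsilon\to 0^+$. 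Even ignoring this, a bound with exponent $(n+2)/(n+1)$ summed over $\sim\lambda|t-t'|$ steps with H\"older's inequality would not produce a $\lambda$-uniform estimate.

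What is actually needed, and what the paper supplies, is a \emph{lower perimeter density estimate} on the boundaries $\p L_i(\lambda,k-1)$: that is the hypothesis $\cH^{n-1}(C\cap B_r(x))\ge\theta r^{n-1}$ in the Almgren--Taylor--Wang volume-distance inequality (Proposition~\ref{prop:ATW}), which converts a small dissipation $\lambda\int_A\dist(\cdot,C)$ into a small $|A|$. Establishing this density estimate uniformly in $k$ and $\lambda$ is the technical heart of the argument: the paper first shows minimizers of $F(\cdot,\cA;\lambda)$ are $(\Lambda,r_0)$-minimizers of $\Per$ in $\P(N)$ (with $\Lambda\sim\lambda\diam\hull\cA$) and then proves density estimates for $(\Lambda,r_0)$-minimizers of partitions (Theorems~\ref{teo:density_est} and~\ref{teo:density_est_ATW}) by a cut-and-fill competitor argument adapted to the multiphase setting. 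This entire layer is missing from your proposal, and without it the H\"older equicontinuity --- and hence the compactness/Ascoli step and the nonemptiness of $GMM$ --- cannot be justified. A secondary, minor point: for $F_H$ the paper works with $H_i\in L^p_\loc$, $p>n$, which forces one to work with almost-minimizers having an error term $|\cA\Delta\cB|^{1-1/p}$ (Remark~\ref{rem:strange_almost_min}) and to confine to the convex hull of $\hull{\cG}\cup B_R(0)$ rather than of $\hull{\cG}$ alone; your $\|H_j\|_\infty$ bound and unchanged confinement set are correct only under strictly stronger hypotheses than the paper's.
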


To prove Theorem \ref{teo:main_introduction}
we establish  uniform density estimates for  minimizers of 
$\func $ and $\funcforce.$ A lower-type density estimate 
for minimizers of $\func $
could be proven using the slicing method for currents as 
in the thesis \cite{Car:Th}, or also using the infiltration technique of 
\cite[Lemma 4.6]{LT:2002} (see also \cite[Section 30.2]{Mag12}).
In Section \ref{sec:partitions} we prove that $(\Lambda,r_0)$-minimizers
of $\Per$ in $\R^n$ (Definition \ref{def:almost_min}) satisfy uniform 
density estimates using 
the method of cutting out and filling in with balls, 
an argument of \cite{LS:95}.  

Some consistency results of GMM starting from disjoint partitions 
(Definition \ref{def:disjoint}) with other notions of solutions 
are shown in Section 6. In particular we have:

\begin{theorem}\label{teo:consistency_intro}
a) Let $\cG\in\P_b(N+1)$ be a disjoint partition and
suppose that for each $i=1,\ldots,N$ there exists a family of smooth sets 
$L_i(t),$ $t\in [0,t_o),$
whose boundaries evolve smoothly by mean curvature in $[0,t_o)$ such that 
$L_i(0)=G_i.$ Then for any $\cM\in GMM(\func,\cG)$ we have
$$
M_i(t) = L_i(t),\qquad t\in [0,t_o).
$$
\smallskip

b) Let $\cG\in\P_b(N+1)$ be a disjoint partition such that
for each $i=1,\ldots,N,$  $|\p G_i|=0,$ 
and suppose that the viscosity solution $v_i$ \cite{Ch:2005} of 
$$
\frac{\p u}{\p t} = |\nabla u|\,\div\,\frac{\nabla u}{|\nabla u|}
$$
starting from $\chi_{G_i^c} - \chi_{G_i}$ is unique. Then 
$GMM(\func,\cG)=\{\cM\}$ is a singleton and 
$$
v_i(x,t) = \chi_{M_i(t)^c}(x) - \chi_{M_i(t)}(x)
\qquad \text{ for every $(x,t)\in \R^n\times[0,+\infty).$}
$$ 
\end{theorem}

In Theorem  \ref{teo:disjoint_initial_part} 
we also show the following  stability result.

\begin{theorem}\label{teo:convex_introduction}
Suppose that 
$\cC = (C_1,\dots,C_{N+1})\in \P_b(N+1),$
where $C_1, \ldots,C_N$ are convex sets whose closures are disjoint. 
Then the GMM 
associated to $\func $ and starting from $\cC$ 
is the minimizing movement $\{\cM\}=MM(\func,\cC),$ and writing 
$$
\cM(t) = (M_1(t),\ldots,M_{N+1}(t)),
$$
we have that each $M_i(t)$ 
agrees  with the classical mean curvature flow starting from $C_i$,
up to the  extinction time. Moreover, if a sequence $\{\cG^{(k)}\}\subset 
\P_b(N+1)$
converges to $\cC\in \P_b(N+1)$ in the Hausdorff distance,
then any $\cM^{(k)}\in GMM(\func,\cG^{(k)})$ 
converges to  $\{\cM\}=MM(\func,\cC)$ in the Hausdorff distance at  
every time $t\ge0.$ 
\end{theorem}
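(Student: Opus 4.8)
The plan is to separate the two assertions of Theorem~\ref{teo:convex_introduction} and to treat the consistency statement first, since the stability statement will exploit the uniqueness of $MM(F,\cC)$. As $C_1,\dots,C_N$ are bounded, convex and have pairwise disjoint closures, set $\delta:=\min_{i\ne j}\distance(\overline{C_i},\overline{C_j})>0$. By the classical theory of the mean curvature flow of convex hypersurfaces \cite{Hui:84,GH:86}, for each $i\le N$ the flow $t\mapsto C_i(t)$ starting from $C_i$ is smooth and strictly convex on an interval $[0,T_i)$ and collapses to a point as $t\to T_i^{-}$; comparing with the stationary supporting half-spaces of $C_i$ gives $C_i(t)\subseteq C_i$ for all $t$, so the sets $C_1(t),\dots,C_N(t)$ remain convex, pairwise disjoint and at mutual distance $\ge\delta$, and $\cC(t):=(C_1(t),\dots,C_N(t),\R^n\setminus\bigcup_{i\le N}C_i(t))$ (with $C_i(t):=\emptyset$ for $t\ge T_i$) is a well-defined curve in $\P_b(N+1)$. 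I would then prove the following \emph{decoupling lemma}: there is $\lambda_0=\lambda_0(\cC)$ such that for every $\lambda\ge\lambda_0$ and every ``separated convex'' partition $\cB=(B_1,\dots,B_{N+1})$ with $B_i\subseteq C_i$ convex for $i\le N$, the unique minimizer of $F(\cdot,\cB;\lambda)$ over $\P_b(N+1)$ is again of this form, its $i$-th component being the unique minimizer of the two-phase functional $P(\cdot)+\lambda\int_{\cdot\,\Delta B_i}\dist(x,\p B_i)\,dx$ and contained in $B_i$.

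The decoupling lemma has two ingredients. First, comparing the minimizer $\cA$ with $\cB$ gives $F(\cA,\cB;\lambda)\le F(\cB,\cB;\lambda)=\Per(\cB)$, hence $\sum_i\int_{A_i\Delta B_i}\dist(x,\p B_i)\,dx\le 2\Per(\cB)/\lambda$; since intersecting with a convex set does not increase perimeter one also gets $A_i\subseteq B_i$ for $i\le N$, and the uniform density estimates of Section~\ref{sec:partitions} upgrade the previous smallness to a Hausdorff bound, so that for $\lambda\ge\lambda_0$ one has $d_{\mathcal H}(A_i,B_i)\le\delta/4$ and in particular the $A_i$, $i\le N$, stay pairwise disjoint at mutual distance $\ge\delta/2$. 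Second, on such separated configurations one has $P(A_{N+1})=\sum_{i\le N}P(A_i)$ and $A_{N+1}\Delta B_{N+1}=\bigsqcup_{i\le N}(A_i\Delta B_i)$ with $\dist(x,\p B_{N+1})=\dist(x,\p B_i)$ on $A_i\Delta B_i$, so that $F(\cdot,\cB;\lambda)$ restricted to separated configurations splits into a sum of $N$ independent two-phase functionals, each of which has a unique convex minimizer contained in $B_i$ by the two-phase theory (Almgren--Taylor--Wang \cite{ATW93}). Iterating the lemma, all iterates of the scheme of Definition~\ref{def:GMM} started from $\cC$ are separated convex partitions of this type and decouple phase-by-phase; applying the two-phase convergence result of \cite{ATW93} to each phase, as $\lambda\to+\infty$ the $i$-th component converges to $C_i(\cdot)$ on $[0,T_i)$ and stays empty afterwards. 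Since the limit does not depend on the chosen diverging sequence, $GMM(F,\cC)=\{\cM\}$ with $\cM(t)=\cC(t)$.

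For the stability statement, let $\cG^{(k)}\to\cC$ in the Hausdorff distance and $\cM^{(k)}\in GMM(F,\cG^{(k)})$, realised by iterates $w^{(k)}(\lambda,\cdot)$ along diverging sequences $\{\lambda^{(k)}_j\}_j$. I would argue by compactness and uniqueness. The uniform density estimates together with the uniform perimeter bound obtained by telescoping $F(w^{(k)}(\lambda,k+1),w^{(k)}(\lambda,k);\lambda)\le F(w^{(k)}(\lambda,k),w^{(k)}(\lambda,k-1);\lambda)$ give $L^1(\R^n)$-compactness of all the iterates, uniformly in $k$, while the local $\tfrac1{n+1}$-H\"older estimate of Theorem~\ref{teo:main_introduction} is uniform in $k$ and yields equicontinuity in $t$ of the maps $t\mapsto\cM^{(k)}(t)$. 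Continuity of the one-step minimization with respect to the datum $\cB$ (lower semicontinuity of $\Per$, continuity of the penalization along $L^1$-converging, uniformly density-estimated sequences, and coercivity) then allows a diagonal argument: after extracting subsequences, $w^{(k)}$ converges to a family of iterates of the scheme started from $\cC$ along a diverging sequence, so every subsequential limit $\cM^{\infty}$ of $\{\cM^{(k)}\}$ lies in $GMM(F,\cC)$. But $GMM(F,\cC)=\{\cM\}$ by the previous step, hence $\cM^{\infty}=\cM$, and since every subsequence of $\{\cM^{(k)}\}$ admits a further subsequence converging to $\cM$, the whole sequence converges. Finally, the convergence is upgraded from $L^1$ to Hausdorff: each $\cM(t)$ consists of smooth convex sets at positive mutual distance, and the uniform lower density estimate prevents the $\cM^{(k)}(t)$ from developing vanishing ``tentacles'', so that $L^1$-convergence to a limit with this structure is equivalent to Hausdorff convergence, uniform in $t$ on compact intervals by the equicontinuity.

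The main obstacle is the decoupling lemma, and specifically the uniform-in-$\lambda$ separation of the components of the minimizer: a priori a configuration in which two phases touch has \emph{smaller} partition perimeter, so the absence of junctions in the minimizer is not formal and genuinely relies on the density estimates to trap each $A_i$ next to $B_i$, together with the strength of the distance penalty for large $\lambda$; once the separation is known, the exact splitting of $F$ and the reduction to \cite{ATW93} are routine. In the stability argument the delicate point is, as usual, the diagonal interchange of the limits $k\to+\infty$ and $\lambda^{(k)}_j\to+\infty$, which again rests on the uniformity in the initial datum of the density and H\"older estimates.
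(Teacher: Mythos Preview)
Your consistency argument follows the paper's overall strategy (decouple into $N$ independent Almgren--Taylor--Wang problems and invoke \cite{BCCN:05}), but the decoupling step has a gap. You assert that ``since intersecting with a convex set does not increase perimeter one also gets $A_i\subseteq B_i$ for $i\le N$''. Replacing each $A_i$ by $A_i\cap B_i$ forces the excess $\bigcup_i(A_i\setminus B_i)$ into the $(N{+}1)$-th phase, and the comparison $P(E\cap K)\le P(E)$ for convex $K$ says nothing about $P(A_{N+1}')$; there is no reason the total $\Per$ decreases under this operation when the original $A_j$ share boundary with each other. The paper does \emph{not} obtain $A_i\subseteq B_i$ prior to decoupling. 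It instead proves an $L^\infty$ bound $\sup_{G_i(\lambda)^c\cap G_i}\dist(\cdot,\partial G_i)\le C\lambda^{-1/2}$ (Lemma~\ref{lem:L_infty_estimate}), via a competitor that fills a ball into phase $i$ and exploits the favourable sign of the signed-distance terms there, and then shows separately (Theorem~\ref{teo:disjoint_initial_part}) that no piece of $G_i(\lambda)$ can lie in the $\epsilon_0/4$-neighbourhood of a different $G_j$, by moving such a ball to phase $j$ and gaining strictly in the $\sigma$-term. Only after this separation does $F$ split; the inclusion $C_i(\lambda)\subseteq C_i$ then comes from \cite{BCCN:05} applied to the decoupled two-phase problem. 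Your proposed shortcut ``density estimates upgrade the $\sigma$-smallness to a Hausdorff bound'' is also not directly available: the $(\Lambda,r_0)$-density estimates of Theorem~\ref{teo:density_est} hold only for radii $r\lesssim 1/\lambda$, far below the fixed scale $\delta/4$ you need.

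Your stability argument is a genuinely different route from the paper's, and as written it does not go through. Two issues. First, the H\"older constant $\widehat c(N,n,\cG)$ in Theorem~\ref{teo:existence_GMM} depends on $\Per(\cG)$, which Hausdorff convergence $\cG^{(k)}\to\cC$ does not control, so your claimed equicontinuity is not available in general. Second, and more seriously, the diagonal step ``every subsequential limit $\cM^\infty$ of $\{\cM^{(k)}\}$ lies in $GMM(F,\cC)$'' is precisely the stability statement to be proved: the iterates $w^{(k)}(\lambda_j^{(k)},\cdot)$ start from $\cG^{(k)}$ and are taken along $k$-dependent values of $\lambda$, and the one-step minimization $\cB\mapsto\argmin F(\cdot,\cB;\lambda)$ is set-valued without uniqueness, so there is no continuous dependence to invoke. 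The paper avoids this entirely by a comparison/sandwich argument: it first traps $G_i^{(k)}\subset A_i$ for a slightly larger convex disjoint $\cA$, then shows (Claim~1 in the proof of Theorem~\ref{teo:stability}, combining Corollary~\ref{cor:coming_to_ATW} with the two-phase maximal/minimal minimizer comparison of \cite{BH:2016}) that all iterates stay inside the $A_i$, hence decouple, and finally applies the two-phase Hausdorff stability of \cite{BCCN:05} and \cite{BH:2016} phase by phase. This relies on convexity and comparison rather than compactness, and it is what makes the interchange of limits unnecessary.
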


The proof of the consistency with the classical mean curvature flow
relies on the results of 
\cite{BCCN:05}, while for the stability in the Hausdorff distance
we employ the comparison results from \cite{BH:2016,CMP:2015}.

Our results do not apply to the case when at least two components of
a partition are unbounded, since in this case they have infinite
perimeter, and it also may happen that the right hand side of 
\eqref{eq:pro_distance},  which allows to replace 
$\int_{E_i \Delta F_i}\dist(x,\p F_i)dx$ with the signed
distance function, is not well-defined.

The plan of the paper is the following.

In Section \ref{sec:notation} we set the notation and recall  some results from
the theory of finite perimeter sets. Section \ref{sec:partitions} 
is devoted to the definitions of partitions and 
density estimates for $(\Lambda,r_0)$-minimizers.
In Section \ref{sec:existence_GMM} we prove the existence of minimizers
of $\func $  in $\P_b(N+1)$ (Theorem \ref{teo:existence_of_minimizers}),
the density estimates (Theorem \ref{teo:density_est_ATW}), 
and -- one of our main results -- the existence of $GMM$ for $\func $ 
(Theorem \ref{teo:existence_GMM}).
The existence of $GMM$ for $\funcforce$ is shown in 
Section \ref{sec:prescribed_curvature}.
Finally, in Section \ref{sec:evolution_disjoint_part} 
we show that any GMM starting from a disjoint partition is also disjoint 
and prove Theorem \ref{teo:consistency_intro} -- 
the consistency result with smooth mean curvature flow.
As a nontrivial application 
of these facts, we show the consistency and stability 
results stated in Theorem \ref{teo:convex_introduction}.

\section{Notation and preliminaries}\label{sec:notation}

In this section we introduce the notation 
 and collect 
some important properties of sets of 
locally finite perimeter. The standard references 
for $BV$-functions and sets of finite perimeter are 
\cite{AFP:00,Gius84}.

We use $\N_0$ to denote the set of all nonnegative integers. 
Given a finite subset $I\subset \N_0,$ we write $|I|$ for 
the number of elements of $I.$ 
The symbol $B_r(x)$  stands for the open ball in 
$\R^n$  centered at $x\in\R^n$ 
of radius $r>0.$ The characteristic function 
of a Lebesgue measurable set $F$
is denoted by $\chi_F$ and its Lebesgue measure
by $|F|;$ we set also $\omega_n:=|B_1(0)|.$
We denote by $E^c$ the complement of $E$ in $\R^n.$

$\K(\R^n)$ (resp. $\K_b(\R^n)$) is  the collection of all 
open (resp. open and bounded) subsets of $\R^n.$ 
The set of $L_\loc^1(\R^n)$-functions
having locally bounded total variation in $\R^n$ is denoted by 
$BV_\loc(\R^n)$ and the elements of 
$$
BV_\loc(\R^n,\{0,1\}):=\{E\subseteq\R^n:\,\, \chi_E\in BV_\loc(\R^n)\}
$$
are called locally finite perimeter sets. 
Given a $E\in BV_\loc(\R^n,\{0,1\})$ we denote by
\begin{itemize}
\item[a)]  $P(E,\Omega):=\int_\Omega|D\chi_E|$ the perimeter of $E$
in $\Omega\in \K(\R^n);$
\item[b)] $\p E$ the measure-theoretic boundary of $E:$
$$
\p E:=\{x\in \R^n:\,\, 0<|B_\rho\cap E|<|B_\rho|\quad\forall \rho>0\};
$$
\item[c)] $\p^*E$ the reduced boundary of $E;$
\item[d)] $\nu_E$   the outer generalized unit normal to $\p^*E.$
\end{itemize}
For simplicity, we set $P(E):=P(E,\R^n)$  provided $E\in BV(\R^n,\{0,1\}).$
Further, given a Lebesgue measurable set $E\subseteq\R^n$ and 
$\alpha\in [0,1]$ we define 
$$
E^{(\alpha)}:=\left\{x\in\R^n:\,\,\lim\limits_{\rho\to0^+}
\frac{|B_\rho(x)\cap E|}{|B_\rho(x)|} =\alpha\right\}.
$$
Unless otherwise stated, we always suppose that any locally finite perimeter 
set $E$ we consider coincides 
with $E^{(1)}$ (so that by \cite[Proposition 3.1]{Gius84} 
$\p E$ coincides with the topological boundary). We recall that $\overline{\p^*E}=\p E$  and 
$D\chi_E = \nu_E d\cH^{n-1}\res \p^*E,$  where $\cH^{n-1}$ is the $(n-1)$-dimensional
Hausdorff measure in $\R^n$ and $\res$ is the symbol of restriction.
Given a nonempty set $E\subseteq\R^n,$  
$\dist(\cdot,E)$ stands for the distance function from $E$
and 
$$
\sdist(x,\p E) = \dist(x,E) - \dist(x,\R^n\setminus E)
$$
is the signed distance function from $\p E,$ negative inside $E.$  
We also write $\dist(A,B)$ to denote the distance between 
$A, B\subset\R^n.$

\begin{theorem}\cite{DG:61}\label{cor:density}
Let $E\in BV_\loc(\R^n,\{0,1\}).$ Then for any $x\in \p^*E$
$$
\lim\limits_{\rho\to0^+} \frac{|E\cap B_\rho(x)|}{|B_\rho(x)|}
=\frac12,\qquad 
\lim\limits_{\rho\to0^+} \frac{P(E,B_\rho(x))}{\omega_{n-1}r^{n-1}} =1.
$$
\end{theorem}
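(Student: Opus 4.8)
This is the classical structure theorem of De Giorgi, and the plan is to prove it by a blow-up argument centered at $x.$ First I would use the translation invariance of the perimeter to assume $x=0,$ and set $\nu:=\nu_E(0),$ which is a unit vector precisely because $0\in\p^*E.$ Let $H:=\{y\in\Rn:\ y\cdot\nu\le0\},$ so that $|H\cap B_1(0)|=\tfrac12\omega_n$ and $|D\chi_H|(B_1(0))=\omega_{n-1}.$ For $\rho>0$ I would introduce the rescaled set $E_\rho:=\rho^{-1}E=\{y\in\Rn:\ \rho y\in E\};$ the scaling of the total variation gives $|D\chi_{E_\rho}|(B_R(0))=\rho^{1-n}P(E,B_{\rho R}(0))$ and $D\chi_{E_\rho}(B_R(0))=\rho^{1-n}D\chi_E(B_{\rho R}(0))$ for every $R>0.$ The point of this reduction is that it suffices to prove $\chi_{E_\rho}\to\chi_H$ in $L^1_\loc(\Rn)$ as $\rho\to0^+$: testing with $B_1(0)$ then gives $|E\cap B_\rho(0)|/|B_\rho(0)|=|E_\rho\cap B_1(0)|/\omega_n\to\tfrac12,$ which is the first assertion.

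Next I would record the two-sided density bound $c_1\rho^{n-1}\le P(E,B_\rho(0))\le c_2\rho^{n-1}$ for all small $\rho,$ with constants $c_1,c_2>0.$ For the upper bound, for a.e.\ $\rho$ one restricts $E$ to $B_\rho(0)$ and uses that the total gradient of a bounded finite-perimeter set vanishes to get $|D\chi_E(B_\rho(0))|\le\cH^{n-1}(\p B_\rho(0))=n\omega_n\rho^{n-1};$ coupling this with the defining property of $\p^*E,$ which gives $|D\chi_E(B_\rho(0))|\ge\tfrac12|D\chi_E|(B_\rho(0))$ for $\rho$ small, bounds $P(E,B_\rho(0))=|D\chi_E|(B_\rho(0))$ from above. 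The lower bound is less elementary and rests on the relative isoperimetric inequality together with a differential inequality for $\rho\mapsto|E\cap B_\rho(0)|,$ using $0\in\p E$ (see \cite{AFP:00,Gius84}). Granting this, the sets $E_\rho$ have perimeters uniformly bounded on every ball, so by the $BV$ compactness theorem any sequence $\rho_j\downarrow0$ admits a subsequence, not relabelled, along which $\chi_{E_{\rho_j}}\to\chi_F$ in $L^1_\loc(\Rn)$ for some $F\in BV_\loc(\Rn,\{0,1\}),$ with $D\chi_{E_{\rho_j}}\rightharpoonup D\chi_F$ weakly-$*$ as $\Rn$-valued Radon measures.

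The hard part will be to identify this blow-up, namely to show $F=H$ up to a Lebesgue null set. The mechanism is that the rescaled gradient measures have an asymptotically constant direction: since $D\chi_E(B_\rho(0))/|D\chi_E|(B_\rho(0))\to\nu$ and hence $|D\chi_E(B_\rho(0))|/|D\chi_E|(B_\rho(0))\to1,$ I would pass to the limit to conclude that the generalized normal $\nu_F$ equals the constant unit vector $\nu$ at $|D\chi_F|$-a.e.\ point, so that $F$ is, up to a null set, a halfspace with boundary orthogonal to $\nu.$ The genuine obstacle is precisely this passage to the limit: lower semicontinuity only yields $|D\chi_F|(B_R(0))\le\liminf_j|D\chi_{E_{\rho_j}}|(B_R(0)),$ so one must upgrade the weak-$*$ convergence of $D\chi_{E_{\rho_j}}$ to convergence of the total variations $|D\chi_{E_{\rho_j}}|$ on balls --- that is, rule out loss of perimeter in the blow-up --- and it is here that the two-sided bound above and the direction estimate are used together. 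Finally, the lower bound $P(E,B_\rho(0))\ge c_1\rho^{n-1}$ forces $|D\chi_{E_\rho}(B_1(0))|$ to stay bounded away from $0,$ which excludes $F=\emptyset$ and $F=\Rn$ and so makes $F$ a genuine halfspace, while $0\in\p E$ pins its boundary through the origin; hence $F=H.$ Since every subsequential limit equals the same $H,$ it follows that $\chi_{E_\rho}\to\chi_H$ in $L^1_\loc(\Rn)$ as $\rho\to0^+,$ and the first assertion is proved.

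For the second assertion I would argue directly from $\chi_{E_\rho}\to\chi_H.$ Lower semicontinuity of the perimeter gives $\liminf_{\rho\to0^+}\rho^{1-n}P(E,B_\rho(0))\ge|D\chi_H|(B_1(0))=\omega_{n-1}.$ For the matching upper estimate note that $\cH^{n-1}\big(\p B_1(0)\cap\{y\cdot\nu=0\}\big)=0,$ so $|D\chi_H|(\p B_1(0))=0$ and the weak-$*$ convergence yields $D\chi_{E_\rho}(B_1(0))\to D\chi_H(B_1(0)),$ whence $\rho^{1-n}|D\chi_E(B_\rho(0))|=|D\chi_{E_\rho}(B_1(0))|\to|D\chi_H(B_1(0))|=\omega_{n-1};$ combined with $|D\chi_E(B_\rho(0))|/|D\chi_E|(B_\rho(0))\to1$ this gives $\rho^{1-n}P(E,B_\rho(0))\to\omega_{n-1},$ which is the second assertion.
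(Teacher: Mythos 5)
The paper does not prove this theorem; it is De Giorgi's classical structure theorem, invoked via the citation \cite{DG:61} with no argument given. There is therefore no in-paper proof to compare against, and the task is just to assess your sketch on its own terms.

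Your blow-up outline is the standard textbook route (cf.\ \cite{AFP:00,Gius84,Mag12}): rescale by $\rho^{-1}$, extract a limit by compactness using two-sided perimeter density bounds, identify the limit as the halfspace $H$ through the direction estimate $D\chi_E(B_\rho(x))/|D\chi_E|(B_\rho(x))\to\nu_E(x)$ combined with upgrading weak-$*$ convergence of the gradient measures to convergence of total variations, and then read off both density limits. The outline is correct and you correctly flag the genuinely delicate step (ruling out loss of perimeter in the blow-up). One place where the exposition is a little loose is the final paragraph: the implication ``weak-$*$ convergence of $D\chi_{E_\rho}$ plus $|D\chi_H|(\p B_1(0))=0$ gives $D\chi_{E_\rho}(B_1(0))\to D\chi_H(B_1(0))$'' is not valid for vector-valued measures by itself --- a sequence such as $\delta_{x_j}-\delta_{y_j}$ with $x_j\in B_1,$ $y_j\notin B_1,$ both tending to a point of $\p B_1,$ converges weakly-$*$ to $0$ (whose total variation charges no mass to $\p B_1$) while the evaluations on $B_1$ do not converge to $0.$ What makes the step legitimate here is precisely the extra information $|D\chi_{E_\rho}|\rightharpoonup^* |D\chi_H|$ that you established in the previous paragraph. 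Once that is in hand it is cleaner to avoid the detour entirely: since $|D\chi_{E_\rho}|$ are nonnegative measures converging weakly-$*$ to $|D\chi_H|$ and $|D\chi_H|(\p B_1(0))=0,$ one has directly $\rho^{1-n}P(E,B_\rho(0))=|D\chi_{E_\rho}|(B_1(0))\to|D\chi_H|(B_1(0))=\omega_{n-1},$ without needing to pass through $|D\chi_E(B_\rho)|/|D\chi_E|(B_\rho)\to1$ a second time.
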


\begin{theorem}\cite[Theorem 3.61]{AFP:00} \label{cor:support_of_Dchi}
For every  $E\in BV_\loc(\R^n,\{0,1\})$ 
$$
\cH^{n-1}(\R^n\setminus (E^{(0)} \cup E  \cup \p^*E)) = 0.
$$
Moreover, $\cH^{n-1}(E^{(1/2)}\setminus \p^*E)=0.$
\end{theorem}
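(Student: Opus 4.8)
\emph{Proof idea.} Since throughout the paper one identifies $E$ with $E^{(1)}$, the first assertion is equivalent to $\cH^{n-1}(\partial^{\mathrm e}E\setminus\p^*E)=0$, where $\partial^{\mathrm e}E:=\R^n\setminus(E^{(0)}\cup E^{(1)})$ is the essential boundary of $E$ (note $\p^*E\subseteq\partial^{\mathrm e}E$, since at every point of $\p^*E$ the density equals $\tfrac12$ by Theorem \ref{cor:density}). The plan is to compare on $\partial^{\mathrm e}E$ the two locally finite Radon measures $\cH^{n-1}$ and the perimeter measure $\mu:=P(E,\cdot)$, exploiting that, by De Giorgi's structure theorem $D\chi_E=\nu_E\,d\cH^{n-1}\res\p^*E$, the measure $\mu$ is carried by $\p^*E$, i.e.\ $\mu(\R^n\setminus\p^*E)=0$, together with the relative isoperimetric inequality in balls.

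First I would record an elementary fact: for every $x\in\partial^{\mathrm e}E$ there exist $t_x\in(0,\tfrac12]$ and a sequence $\rho_j\downarrow0$ with
$$
\min\bigl\{|E\cap B_{\rho_j}(x)|,\ |B_{\rho_j}(x)\setminus E|\bigr\}\ \geq\ t_x\,|B_{\rho_j}(x)|\qquad\text{for all }j.
$$
Indeed $\rho\mapsto|E\cap B_\rho(x)|/|B_\rho(x)|$ is continuous on $(0,+\infty)$, and since $x\notin E^{(0)}\cup E^{(1)}$ its limit as $\rho\to0^+$ either does not exist or lies in $(0,1)$; in either case continuity (together with the intermediate value theorem in the extreme case) yields radii $\rho_j\downarrow0$ along which this ratio stays in a compact subinterval of $(0,1)$. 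Hence $\partial^{\mathrm e}E=\bigcup_{k\ge1}N_k$, where $N_k:=\bigl\{x:\ \limsup_{\rho\to0^+}\min\{|E\cap B_\rho(x)|,|B_\rho(x)\setminus E|\}\geq\tfrac1k|B_\rho(x)|\bigr\}$ is a Borel set.

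On each $N_k$ I would upgrade this volume lower bound to a perimeter lower bound via the relative isoperimetric inequality: there is a dimensional constant $C(n)$ such that $\min\{|E\cap B_\rho(x)|,|B_\rho(x)\setminus E|\}^{(n-1)/n}\le C(n)\,P(E,B_\rho(x))$ for every ball, whence for $x\in N_k$
$$
\limsup_{\rho\to0^+}\ \frac{\mu(B_\rho(x))}{\rho^{\,n-1}}\ \geq\ c(n,k)\ >\ 0
$$
for a positive constant $c(n,k)$ depending only on $n$ and $k$. I would then invoke the standard measure--density comparison lemma: if a Radon measure $\mu$ satisfies $\limsup_{\rho\to0^+}\mu(B_\rho(x))/\rho^{\,n-1}\ge c>0$ for all $x$ in a Borel set $A$, then $\cH^{n-1}(A)\le \const(n)\,c^{-1}\,\mu(A)$, proved by covering $A$ with balls realizing (up to a fixed factor) the above $\limsup$, extracting a disjoint subfamily by the Vitali $5r$-lemma, and using outer regularity of $\mu$. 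Applying this with $A=N_k\setminus\p^*E$ and recalling $\mu(N_k\setminus\p^*E)=0$ gives $\cH^{n-1}(N_k\setminus\p^*E)=0$; by subadditivity over $k$ we obtain $\cH^{n-1}(\partial^{\mathrm e}E\setminus\p^*E)=0$, which is the first assertion.

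The ``moreover'' is then immediate: $E^{(1/2)}\subseteq\R^n\setminus(E^{(0)}\cup E^{(1)})=\partial^{\mathrm e}E$, so $E^{(1/2)}\setminus\p^*E\subseteq\partial^{\mathrm e}E\setminus\p^*E$ is $\cH^{n-1}$-negligible. Beyond the structure theorem, which is already at our disposal, the only ingredients are the relative isoperimetric inequality and the covering lemma; the step requiring most care is getting the hypotheses and direction of the density comparison lemma right---namely that a \emph{lower} density bound for $\mu$ on a set $A$ forces $\cH^{n-1}(A)$ to be bounded \emph{above} by a multiple of $\mu(A)$---combined with the structural fact that the perimeter measure charges only $\p^*E$.
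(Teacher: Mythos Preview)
The paper does not give its own proof of this statement: it is simply quoted as \cite[Theorem~3.61]{AFP:00} (Federer's theorem on the essential boundary) and used as a black box. Your proposal is a correct outline of the standard proof found in that reference: one shows that at every point of the essential boundary the perimeter measure has positive lower $(n-1)$-density (via the relative isoperimetric inequality), and then invokes the density comparison lemma to conclude that any subset of $\partial^{\mathrm e}E$ which is null for $\mu=P(E,\cdot)$ is also $\cH^{n-1}$-null; since $\mu$ is carried by $\p^*E$, this gives $\cH^{n-1}(\partial^{\mathrm e}E\setminus\p^*E)=0$. So there is nothing to compare---your argument is the textbook one the paper is citing.
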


\begin{remark}
Given $E\in BV_\loc(\R^n,\{0,1\})$ the map 
$\Omega\in \K(\R^n)\mapsto P(E,\Omega)$ extends to a Borel measure in $\R^n,$
so that $P(E,\Borel) = \cH^{n-1}(\Borel\cap \p^*E)$ for every Borel set $\Borel\subseteq\R^n.$
\end{remark}

\begin{theorem}\cite[Theorem 16.3]{Mag12}\label{teo:Gauss-green_meas}
If $E$ and $F$ are sets of locally finite perimeter, and we let
$$
\{\nu_E = \nu_F \} = \{x \in \p^*E \cap \p^*F:\,\, \nu_E(x) = \nu_F(x)\},
$$
$$
\{\nu_E = -\nu_F \} = \{x \in \p^*E \cap \p^*F:\,\, \nu_E(x) = - \nu_F(x)\},
$$
then $E \cap F,$ $E \setminus F$ and $E \cup F$ are locally finite perimeter sets
with
\begin{equation}\label{ess_intersection}
\p^*(E \cap F) \approx \big(F  \cap \p^* E\big) \cup 
\big(E  \cap \p^* F\big) \cup \big\{\nu_E = \nu_F\big\}, 
\end{equation}
\begin{equation}\label{ess_differense}
\p^*(E \setminus F) \approx \big(F^{(0)} \cap \p^* E\big) \cup 
\big(E  \cap \p^* F\big) \cup \big\{\nu_E = -\nu_F\big\}, 
\end{equation}
\begin{equation}\label{ess_union}
\p^*(E \cup F) \approx \big(F^{(0)} \cap \p^* E\big) \cup 
\big(E^{(0)} \cap \p^* F\big) \cup \big\{\nu_E = \nu_F\big\},
\end{equation}
where $A\approx B$  means   $\cH^{n-1}(A\Delta B) =0.$
Moreover, for every Borel set $\Borel \subseteq R^n$
\begin{equation}\label{per_intersection}
P(E \cap F, \Borel) = P(E, F  \cap \Borel) + P(F, E  \cap \Borel)+ 
H^{n-1}\big(\{\nu_E = \nu_F\} \cap \Borel\big), 
\end{equation}
\begin{equation}\label{per_difference}
P(E \setminus F, \Borel) = P(E, F^{(0)} \cap \Borel) + P(F, E  \cap \Borel)+ 
H^{n-1}\big(\{\nu_E = -\nu_F\} \cap \Borel\big), 
\end{equation}
\begin{equation}\label{per_union}
P(E \cup F, \Borel) = P(E, F^{(0)} \cap \Borel) + P(F, E^{(0)} \cap \Borel)+ 
H^{n-1}\big(\{\nu_E = \nu_F\} \cap \Borel\big). 
\end{equation}
\end{theorem}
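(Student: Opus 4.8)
\emph{Proof proposal.} This is the classical structure theorem for Boolean combinations of Caccioppoli sets, stated here as \cite[Theorem 16.3]{Mag12}; I would reconstruct it as follows. First I would reduce everything to the single identity for $E\cap F$. Indeed $E\setminus F=E\cap F^c$ and $E\cup F=(E^c\cap F^c)^c$, and for any locally finite perimeter set $G$ one has $\p^*(G^c)=\p^*G$, $\nu_{G^c}=-\nu_G$, $(G^c)^{(0)}=G^{(1)}$ and $(G^c)^{(1)}=G^{(0)}$, while $P(G^c,\cdot)=P(G,\cdot)$; hence \eqref{ess_differense}, \eqref{ess_union}, \eqref{per_difference} and \eqref{per_union} follow from \eqref{ess_intersection} and \eqref{per_intersection} by replacing $F$ with $F^c$ (and then also $E$ with $E^c$) and relabelling. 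Local finiteness of the perimeter of $E\cap F$ is immediate from $\chi_{E\cap F}=\min\{\chi_E,\chi_F\}$ together with the stability of $BV_\loc(\Rn)$ under pointwise minima.

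To prove \eqref{ess_intersection} I would argue pointwise, $\cH^{n-1}$-almost everywhere. By Federer's theorem (Theorem \ref{cor:support_of_Dchi}) I may discard an $\cH^{n-1}$-negligible set and assume that, at the point $x$, each of $E$ and $F$ either has density $0$, has density $1$, or lies on its reduced boundary; in the last case De Giorgi's blow-up theorem gives convergence of the rescalings $(G-x)/r$ to the half-space with inner normal $-\nu_G(x)$. Running through the combinations: if $x\in E^{(0)}\cup F^{(0)}$ then $x\in(E\cap F)^{(0)}$ and $x\notin\p^*(E\cap F)$; if $x\in E^{(1)}\cap F^{(1)}$ then $x\in(E\cap F)^{(1)}$ and $x\notin\p^*(E\cap F)$; if $x\in E=E^{(1)}$ and $x\in\p^*F$ (or symmetrically) then locally $E$ differs from $\Rn$ by a set of density $0$, so $E\cap F$ has the same blow-up as $F$, whence $x\in\p^*(E\cap F)$ with $\nu_{E\cap F}(x)=\nu_F(x)$, and these points make up exactly $E\cap\p^*F$ (resp.\ $F\cap\p^*E$) modulo $\cH^{n-1}$-null sets; finally, if $x\in\p^*E\cap\p^*F$, then at $\cH^{n-1}$-a.e.\ such $x$ the approximate tangent planes of the two rectifiable sets coincide, i.e.\ $\nu_E(x)=\pm\nu_F(x)$, and the blow-up of $E\cap F$ is the intersection of the two half-spaces: it is again a half-space of density $1/2$, with normal $\nu_E(x)$, when $\nu_E(x)=\nu_F(x)$, and it has density $0$ when $\nu_E(x)=-\nu_F(x)$. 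Adding up the contributions gives $\p^*(E\cap F)\approx(F\cap\p^*E)\cup(E\cap\p^*F)\cup\{\nu_E=\nu_F\}$, which is \eqref{ess_intersection}. For \eqref{per_intersection} I would note that these three sets are pairwise $\cH^{n-1}$-disjoint — their points have respectively $F$-density $1$, $E$-density $1$, and both $E$- and $F$-density $1/2$, and no two of these are compatible on a set of positive $\cH^{n-1}$-measure — so, using $P(G,\Borel)=\cH^{n-1}(\Borel\cap\p^*G)$ (the Remark above), for every Borel set $\Borel\subseteq\Rn$ the perimeter splits additively as $\cH^{n-1}(\Borel\cap F\cap\p^*E)+\cH^{n-1}(\Borel\cap E\cap\p^*F)+\cH^{n-1}(\Borel\cap\{\nu_E=\nu_F\})=P(E,F\cap\Borel)+P(F,E\cap\Borel)+\cH^{n-1}(\{\nu_E=\nu_F\}\cap\Borel)$.

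The one step that really needs care is the claim used above that $\nu_E(x)=\pm\nu_F(x)$ for $\cH^{n-1}$-a.e.\ $x\in\p^*E\cap\p^*F$. This is where the $(n-1)$-rectifiability of the reduced boundary is essential: two $C^1$-rectifiable hypersurfaces meeting along a set of positive $\cH^{n-1}$-measure must share the same approximate tangent plane at $\cH^{n-1}$-a.e.\ point of that set, and the normals there agree up to sign. Once this is granted, the only further ingredients are Federer's theorem, the blow-up theorem, and the elementary computation of the density at $0$ of the intersection of two half-spaces through the origin, all of which are routine.
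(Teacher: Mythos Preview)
The paper does not prove this statement; it is quoted verbatim from \cite[Theorem~16.3]{Mag12} and used as a black box. Your reconstruction is the standard argument and matches the proof in Maggi's book: reduce to $E\cap F$ by complementation, invoke Federer's structure theorem to restrict attention to points where each of $E,F$ has density $0$, $1$, or lies on the reduced boundary, and compute the blow-up of $E\cap F$ case by case, with rectifiability forcing $\nu_E=\pm\nu_F$ at $\cH^{n-1}$-a.e.\ point of $\p^*E\cap\p^*F$. There is nothing to correct.
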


Finally, recall that for every $E,F\in BV_\loc(\R^n,\{0,1\})$ and $\Omega\in \K(\R^n)$
\begin{equation}\label{famfor}
P(E\cap F,\Omega) + P(E\cup F,\Omega) \le P(E,\Omega) + P(F,\Omega).
\end{equation}
 
\section{Partitions}\label{sec:partitions}

Now we give the notions of partition,
$(\Lambda,r_0)$-minimizer  and boun\-ded partition.
The main result  of this section is represented by the 
density estimates for $(\Lambda,r_0)$-minimizers
(Theorem \ref{teo:density_est}).

\begin{definition}[\bf Partition]\label{def:partitions}
Given an integer  $N\ge2,$  an $N$-tuple  $\cC=(C_1,\ldots,\allowbreak C_{N})$
of subsets of  $\R^n$ is called an {\bf $N$-partition} 
  of $\R^n$ (a partition, for short) if 
\begin{itemize}
\item[(a)] $C_i\in BV_\loc(\R^n,\{0,1\})$ for every $i=1,\ldots, N,$
\item[(b)] $\sum\limits_{i=1}^{N} |C_i\cap K| =|K|$  for each 
compact $K\subset \R^n.$
\end{itemize}
\end{definition}

The collection of all $N$-partitions of $\R^n$ 
is denoted by $\P(N).$
Our assumptions $C_i=C_i^{(1)}$ implies $C_i\cap C_j=\emptyset$
for $i\ne j.$ 
Notice also that we do not exclude the case $C_i=\emptyset.$

The elements of $\P(N)$ are denoted by calligraphic 
letters  $\cA,\cB,\cC,\ldots$ and the
components of $\cA\in \P(N)$ by the
corresponding roman letters 
$(A_1,\ldots,A_N).$ 
 The functional 
$$
(\cA,\Omega)\in \P(N)\times \K(\R^n)\mapsto \Per(\cA,\Omega):= 
  \frac12\sum\limits_{j=1}^{N} P(A_j,\Omega)
$$
is called the perimeter  of the partition $\cA$ in $\Omega.$ For simplicity,
we write $\Per(\cA):=\Per(\cA,\R^n).$ 
We set
$$
\cA\Delta\cB:= \bigcup\limits_{j=1}^{N} A_j\Delta B_j 
$$
and
\begin{equation}\label{L_1dist}
|\cA\Delta \cB|:= \sum\limits_{j=1}^{N} |A_j\Delta B_j|, 
\end{equation}
where $\Delta $ is the symmetric difference of sets, i.e. 
$E\Delta F = (E\setminus F)\cup (F\setminus E).$

We say that the sequence $\{\cA^{(k)}\}\subseteq\P(N)$ 
{\it converges} to $\cA\in \P(N)$ in $L_\loc^1(\R^n)$  if 
$$
|(\cA^{(k)}\Delta \cA)\cap K|:= \sum\limits_{j=1}^{N} |(A_j^{(k)}\Delta A_j)\cap K| \to 0
\qquad\text{as $k\to+\infty$}
$$
for every compact set $K\subseteq \R^n.$
Since $E\in BV_\loc(\R^n,\{0,1\})\mapsto P(E,\Omega)$
is $L_\loc^1(\R^n)$-lower semicontinuous 
for any $\Omega\in \K(\R^n),$ the map $\cA\in \P(N)\mapsto \Per(\cA,\Omega)$ 
is $L_\loc^1(\R^n)$-lower semicontinuous.
The following compactness result  can be proven 
using  \cite[Theorem 3.39]{AFP:00} and a diagonal argument.
 
\begin{theorem}[\bf Compactness]\label{teo:compactness}
Let $\{\cA^{(l)}\}\subset \P(N)$  be a sequence of partitions such that 
\begin{equation}\label{uniform_p}
\sup\limits_{l\ge1} \Per(\cA^{(l)},\Omega)<+\infty\qquad\forall \Omega\in\K_b(\R^n). 
\end{equation}
Then there exist a partition $\cA\in \P(N)$ and a subsequence
$\{\cA^{(l_k)}\}$ converging
to $\cA$ in $L_\loc^1(\R^n)$ as $k\to+\infty.$ 
\end{theorem}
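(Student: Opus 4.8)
The plan is to reduce the statement to the classical $BV$-compactness theorem applied to each of the $N$ components separately, and then to check that the partition constraint survives passage to the $L_\loc^1(\R^n)$-limit.

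First I would fix an exhaustion of $\R^n$ by the balls $B_m:=B_m(0)$, $m\in\N$. For a fixed component index $i\in\{1,\dots,N\}$ and a fixed $m$, the functions $\chi_{A_i^{(l)}}$ satisfy $\|\chi_{A_i^{(l)}}\|_{L^1(B_m)}\le|B_m|$ and, by the hypothesis \eqref{uniform_p}, $\sup_l P(A_i^{(l)},B_m)\le 2\sup_l\Per(\cA^{(l)},B_m)<+\infty$; hence $\{\chi_{A_i^{(l)}}\}_l$ is bounded in $BV(B_m)$. By \cite[Theorem 3.39]{AFP:00} together with a diagonal procedure over $m$, one extracts a subsequence along which $\chi_{A_1^{(l)}}$ converges in $L_\loc^1(\R^n)$; repeating this finitely many times for $i=2,\dots,N$, I obtain a single subsequence $\{\cA^{(l_k)}\}$ such that $\chi_{A_i^{(l_k)}}\to f_i$ in $L_\loc^1(\R^n)$ as $k\to+\infty$ for every $i$, with $f_i\in BV_\loc(\R^n)$.

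Next I would identify the limits and verify that they form a partition. Passing to a further (unrelabeled) subsequence the convergence is also pointwise a.e., so each $f_i$ takes values in $\{0,1\}$ a.e.; setting $A_i:=\{f_i=1\}$ and replacing it by $A_i^{(1)}$ (which alters it only on a Lebesgue-null set, following the convention of Section \ref{sec:notation}) yields $\chi_{A_i^{(l_k)}}\to\chi_{A_i}$ in $L_\loc^1(\R^n)$. The $L_\loc^1(\R^n)$-lower semicontinuity of the perimeter gives $P(A_i,\Omega)\le\liminf_k P(A_i^{(l_k)},\Omega)<+\infty$ for every $\Omega\in\K_b(\R^n)$, so condition (a) of Definition \ref{def:partitions} holds; and since $\sum_{i=1}^N\chi_{A_i^{(l_k)}}=1$ a.e. for every $k$, restricting to an arbitrary compact $K$ and letting $k\to+\infty$ in $L^1(K)$ gives $\sum_{i=1}^N\chi_{A_i}=1$ a.e. on $K$, i.e. condition (b). Thus $\cA:=(A_1,\dots,A_N)\in\P(N)$ and $\cA^{(l_k)}\to\cA$ in $L_\loc^1(\R^n)$.

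The argument is essentially routine; the only point requiring a little care is the combined extraction, namely producing one subsequence that works simultaneously for all $N$ components and on every ball $B_m$ of the exhaustion — this is exactly where the finiteness of $N$ and the standard diagonal trick over $\{B_m\}$ enter. I do not anticipate any genuine obstacle beyond this bookkeeping and the (standard) passages to a.e.-convergent subsequences used to identify the limits as characteristic functions.
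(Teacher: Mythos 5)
Your proof is correct and follows exactly the route the paper indicates: apply the $BV$-compactness theorem \cite[Theorem 3.39]{AFP:00} componentwise on an exhaustion by balls, run a diagonal argument to obtain a single subsequence, and check that the two defining conditions of a partition pass to the $L^1_\loc$-limit. No discrepancy with the paper's (sketched) argument.
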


The next result is proven for the convenience of the reader.

\begin{proposition}[\bf Boundaries of ``neighboring'' sets]\label{teo:boundary_neighbor}
Let $\cA\in \P(N).$ Then 
$$
\cH^{n-1}\Big(\p^*A_i\setminus  
\bigcup\limits_{j=1,\,j\ne i}^N \p^*A_j \Big) =0\qquad \forall i=1,\ldots,N.
$$
\end{proposition}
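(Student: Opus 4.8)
The plan is to fix an index $i\in\{1,\dots,N\}$ and show that the ``bad set''
$$
E:=\p^*A_i\setminus\bigcup_{j=1,\,j\ne i}^{N}\p^*A_j
$$
is $\cH^{n-1}$-negligible; the whole argument is just a bookkeeping of Lebesgue densities. First I would invoke Theorem \ref{cor:support_of_Dchi}, applied to each of the finitely many components $A_j$ with $j\ne i$: the set
$$
Z:=\bigcup_{j=1,\,j\ne i}^{N}\Big(\R^n\setminus\big(A_j^{(0)}\cup A_j\cup\p^*A_j\big)\Big)
$$
satisfies $\cH^{n-1}(Z)=0$, so it is enough to prove that $E\setminus Z=\emptyset$.

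So let $x\in E\setminus Z$. Since $x\in\p^*A_i$, Theorem \ref{cor:density} gives $\lim_{\rho\to0^+}|B_\rho(x)\cap A_i|/|B_\rho(x)|=\tfrac12$, and hence also $\lim_{\rho\to0^+}|B_\rho(x)\setminus A_i|/|B_\rho(x)|=\tfrac12$. On the other hand, for each $j\ne i$ we have $x\notin\p^*A_j$, and since $x\notin Z$ this forces $x\in A_j^{(0)}\cup A_j$ (recall the standing convention $A_j=A_j^{(1)}$). I would next rule out the case $x\in A_j$: if it held, the density of $A_j$ at $x$ would equal $1$, and since the components of $\cA$ are pairwise disjoint this would force the density of $A_i$ at $x$ to be $0$, contradicting the previous line. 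Hence $x\in A_j^{(0)}$ for every $j\ne i$, and therefore
$$
\limsup_{\rho\to0^+}\frac{\big|B_\rho(x)\cap\bigcup_{j\ne i}A_j\big|}{|B_\rho(x)|}\le\sum_{j=1,\,j\ne i}^{N}\lim_{\rho\to0^+}\frac{|B_\rho(x)\cap A_j|}{|B_\rho(x)|}=0.
$$
But by Definition \ref{def:partitions}(b) together with the disjointness of the components, $\R^n\setminus\bigcup_{j=1}^{N}A_j$ is Lebesgue-null, so $\bigcup_{j\ne i}A_j$ and $\R^n\setminus A_i$ differ by a Lebesgue-null set and thus have the same density at $x$, namely $\tfrac12$. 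This contradicts the displayed estimate, so $E\setminus Z=\emptyset$ and $\cH^{n-1}(E)\le\cH^{n-1}(Z)=0$, which is the assertion.

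I do not expect a genuine obstacle here; the only mild points requiring care are that all the ``$\cH^{n-1}$-a.e.'' exceptional sets involve only the finitely many indices $j\ne i$, so their union remains $\cH^{n-1}$-null, and that replacing $\bigcup_{j\ne i}A_j$ by $\R^n\setminus A_i$ up to a Lebesgue-null set leaves pointwise densities unchanged.
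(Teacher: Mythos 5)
Your proof is correct, and it takes a genuinely different route from the paper's. You invoke Federer's structure theorem (Theorem \ref{cor:support_of_Dchi}) to reduce the question to a pointwise count of Lebesgue densities: outside the $\cH^{n-1}$-null Federer exceptional set, each point of $\p^*A_i$ that lies in no other $\p^*A_j$ would have to lie in $A_j^{(0)}\cup A_j^{(1)}$ for every $j\ne i$, and a short density bookkeeping (using De Giorgi's $\tfrac12$-density at reduced boundary points, disjointness, and the partition property) yields a contradiction. The paper instead works through the \emph{topological} boundaries $\p A_j$ in a four-step argument: Step 1 shows that a point of $\p^*A_1$ can lie in at most one other reduced boundary; Steps 2--3 show that points of the bad set $\Sigma^{(s)}$ must lie in $\p A_k\setminus\p^*A_k$ for at least two indices $k$; Step 4 then kills $\cH^{n-1}(\Sigma^{(s)})$ by a Vitali covering argument, exploiting that $\sum_{j\ge2}P(A_j,\Sigma^{(s)})=0$ to choose an open $U\supseteq\Sigma^{(s)}$ of small perimeter mass and bounding the Hausdorff premeasure of $\Sigma^{(s)}$ through De Giorgi's perimeter-density lower bound for $A_1$. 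Your approach is shorter and arguably cleaner, at the cost of leaning on the full strength of Federer's theorem, which the paper cites but does not use at this point; the paper's covering argument uses only De Giorgi's density theorem, making it more self-contained, and its Step 1 delivers as a by-product the \emph{at-most-one-neighbor} statement that underlies Remark \ref{rem:boundary_neighbor}. Both proofs are complete and valid.
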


\begin{proof}
If $N=2,$ then 
\begin{equation}\label{nteng2}
\p^*A_1 = \p^*(\R^n\setminus A_1)  = \p^*A_2, 
\end{equation}
hence we suppose $N\ge3.$ 
There is no loss of generality in assuming $i=1.$ 
By virtue of \eqref{ess_union}, 
there exists an $\cH^{n-1}$-negligible set 
$Z_{2;3}^{}\subset\p A_2\cup\p A_3$ such that 
$$
\p^*(A_2\cup A_3) = Z_{2;3}^{}\cup \Big(A_2^{(0)}\cap \p^*A_3\Big)
\cup \Big(A_3^{(0)}\cap \p^*A_2\Big)\cup \{\nu_{A_2} = \nu_{A_3}\}.
$$
Therefore, 
$$
\p^*(A_2\cup A_3)\subseteq Z_{2;3}^{}\cup \Big(\p^*A_2 \cup \p^*A_3\Big), 
$$
and by an induction argument, for any $j\in \{3,\ldots,N\}$
there exists an $\cH^{n-1}$-negligible set 
$Z_{2,\ldots,j-1;j}^{}\subset \p 
\Big(\bigcup\limits_{h=2}^{j-1}A_h\Big)\cup 
\p A_j$ such that 
$$
\p^*\Big(\bigcup\limits_{j=2}^N A_j\Big)
\subseteq \Big(\bigcup\limits_{j=3}^{N} Z_{2,\ldots,j-1;j}^{}\Big)\cup
\Big(\bigcup\limits_{j=2}^N \p^*A_j \Big).
$$
Hence
\begin{equation}\label{birlashmalar}
\p^*A_1 \setminus  \bigcup\limits_{j=2}^N \p^*A_j \subseteq 
\Big(\bigcup\limits_{j=3}^{N} Z_{2,\ldots,j-1;j}^{}\Big)\cup 
\p^*A_1\setminus \p^*\Big(\bigcup\limits_{j=2}^N A_j\Big).
\end{equation}
In view of \eqref{nteng2}, we have
$$
\p^*\Big(\bigcup\limits_{j=2}^N A_j\Big) = \p^*(\R^n\setminus A_1) = \p^*A_1,
$$
whence from \eqref{birlashmalar},
$$
\cH^{n-1}\Big(\p^*A_1 \setminus  \bigcup\limits_{j=2}^N \p^*A_j\Big)\le 
\sum\limits_{j=3}^N \cH^{n-1}(Z_{2,\ldots,j-1;j}^{})=0. 
$$
\end{proof}

\begin{remark}\label{rem:boundary_neighbor}
From Proposition \ref{teo:boundary_neighbor} it follows that 
\begin{align*}
\Per(\cA,\Omega)= & \frac12\sum\limits_{j=1}^N \cH^{n-1}(\Omega\cap \p^*A_j)
= \sum\limits_{j=2}^N \sum\limits_{i=1}^{j-1}
\cH^{n-1}(\Omega\cap \p^*A_j\cap \p^*A_i).
\end{align*}
Since $\cH^{n-1}(\Omega\cap \p^*A_j\cap \p^*A_i)$ is the $(n-1)$-dimensional 
area of the interface
between the phases $A_i$ and $A_j,$ 
$\Per(\cA,\Omega)$ measures the total perimeter  of the
interfaces in $\Omega.$ 
\end{remark}

\subsection{$(\Lambda,r_0)$-minimizers}
In order to prove Theorem \ref{teo:density_est_ATW} it is 
convenient to give the following definition.

\begin{definition}[\bf $(\Lambda,r_0)$-minimizers]\label{def:almost_min}
Given $\Lambda\ge0$ and $r_0\in(0,+\infty]$
we say that a partition $\cA\in \P(N)$ is a $(\Lambda,r_0)$-minimizer 
of $\Per$  in $\R^n$ (a $(\Lambda,r_0)$-minimizer, for short)
if 
$$
\Per(\cA,B_r(x)) \le \Per(\cB,B_r(x)) +\Lambda |\cA\Delta\cB|
$$
whenever $x\in\R^n,$ $\cB\in \P(N),$ 
$\cA\Delta\cB\strictlyincluded B_r(x),$ and $r\in(0,r_0).$
\end{definition}

The crucial technical tool is the following.

\begin{theorem}[\bf Density estimates for $(\Lambda,r_0)$-
minimizers]\label{teo:density_est}
Let  $\cA\in \P(N)$ be a $(\Lambda,r_0)$-minimizer, 
$i\in\{1,\ldots,N\}$ and 
$\hat r_0:= \min\{r_0,\frac{n}{4(N-1)\Lambda}\} $ if $\Lambda>0$
and $\hat r_0:= r_0$ if $\Lambda=0.$
Then for any $x\in   \p A_i$  and $r\in (0, \hat r_0)$ 
the following density estimates hold:
\begin{equation}\label{volume_density}
\Big(\dfrac{1}{2N}\Big)^n \le \dfrac{|A_i\cap B_r(x)|}{|B_r(x)|}
\le 1- \dfrac{1}{2^n}\Big(1- \dfrac{1}{2(N-1)}\Big)^n,
\end{equation}
\begin{equation}\label{perimeter_density}
c_{n,N} \le \dfrac{P(A_i,B_r(x))}{r^{n-1}} \le  \frac{2N-1}{2(N-1)}\,n\omega_n,  
\end{equation}
where 
\begin{equation}\label{maaqweod}
c_{n,N} := \frac{n\omega_n^{1/n}(2^{1/n} -1 )}{2^{n+1/n} N^{n-1} }. 
\end{equation}
Moreover, 
\begin{equation}\label{cover}
\sum\limits_{i=1}^N\cH^{n-1}(\p A_i\setminus \p^* A_i)=0. 
\end{equation}
\end{theorem}

\begin{proof}
We may suppose  $i=1.$  Moreover, 
since $ \overline{\p^*A_1}=\p A_1,$ it suffices to show
\eqref{volume_density}-\eqref{perimeter_density} whenever
$x\in \p^* A_1.$  Writing $B_\rho:=B_\rho(x)$ for $\rho>0,$ 
we will show that for a.e. $r\in (0,\hat r_0)$ one has
\begin{equation}\label{eeerrr}
P(\R^n\setminus A_1,B_r) \le \cH^{n-1}((\R^n\setminus A_1) \cap \p B_r) +2
\Lambda |(\R^n\setminus A_1) \cap B_r|. 
\end{equation}

Choose $r\in(0,\hat r_0)$ satisfying
\begin{equation}\label{eqqq111}
\sum\limits_{j=1}^N \cH^{n-1}(\p B_r\cap \p^* A_j)=0 
\end{equation}
and define the competitor $\cB\in \P(N)$ as
$$
\cB:=(A_1\cup B_r,A_2\setminus B_r,\ldots,A_N\setminus B_r).
$$ 
Then $\cA\Delta\cB\strictlyincluded B_s$ for every $s\in (r,\hat r_0)$ 
and thus, by $(\Lambda,r_0)$-minimality,
\begin{equation}\label{rytur}
\begin{aligned}
0\le & 2\Per(\cB,B_s) - 2\Per(\cA,B_s) +2\Lambda|\cA\Delta\cB| =  
P(A_1\cup B_r,B_s) - P(A_1,B_s)\\
& +\sum\limits_{j=2}^N \Big(P(A_j\setminus B_r,B_s) - P(A_j,B_s)\Big) 
+2\Lambda|B_r\setminus A_1| + 2\Lambda\sum\limits_{j=2}^N |A_j\cap B_r|.
\end{aligned} 
\end{equation}
By the  disjointness of the $A_j$\!'s we have 
\begin{equation}\label{aaeee}
\sum\limits_{j=2}^N |A_j\cap B_r| = |B_r\setminus A_1|. 
\end{equation}
Moreover, recalling that $A_j^{(1)}=A_j,$ 
from the relation \eqref{per_difference}, \eqref{eqqq111} and 
$\cH^{n-1}(B_s\cap \{\nu_{A_j}=-\nu_{B_r}\})=0,$
we get 
\begin{equation}\label{set_operation12}
P(A_j\setminus B_r,B_s) =  P(A_j,B_s\setminus \overline{B_r}) +
\cH^{n-1}(A_j \cap \p B_r)\qquad\forall j\in \{2,\ldots,N\}.
\end{equation}
Thus, 
\begin{equation*}
\sum\limits_{j=2}^N P(A_j\setminus B_r,B_s) = \sum\limits_{j=2}^N
P(A_j,B_s\setminus \overline{B_r}) + \sum\limits_{j=2}^N
\cH^{n-1}(A_j \cap \p B_r).
\end{equation*}
By the  disjointness of the $A_j$\!'s, Theorem \ref{cor:support_of_Dchi}
and the choice of $r$ in \eqref{eqqq111}, 
$$
\sum\limits_{j=2}^N \cH^{n-1}(A_j \cap \p B_r) = 
\cH^{n-1}(A_1^{(0)}\cap \p B_r)=\cH^{n-1}((\R^n\setminus A_1) \cap \p B_r).
$$
Therefore,
\begin{equation}\label{aee}
\sum\limits_{j=2}^N P(A_j\setminus B_r,B_s) = \sum\limits_{j=2}^N
P(A_j,B_s\setminus \overline{B_r}) + 
\cH^{n-1}((\R^n\setminus A_1) \cap \p B_r). 
\end{equation}
Finally, since $\cH^{n-1}(B_s\cap \{\nu_{A_1} = \nu_{B_r}\})=0$
by \eqref{eqqq111}, from \eqref{per_union} we deduce
\begin{equation}\label{aaaeee}
P(A_1\cup B_r,B_s) = P(A_1, B_s\setminus \overline{B_r}) +
\cH^{n-1}((\R^n\setminus A_1)\cap \p B_r). 
\end{equation}
Now inserting \eqref{aaeee}, \eqref{aee} and \eqref{aaaeee}  in \eqref{rytur}
we get 
\begin{equation}\label{rtzz}
P(A_1,B_r) + \sum\limits_{j=2}^N P(A_j,B_r) \le 
2\cH^{n-1}((\R^n\setminus A_1) \cap \p B_r) +
4\Lambda |(\R^n\setminus A_1) \cap B_r|. 
\end{equation}
Applying \eqref{famfor} and using the  disjointness
 of the  $A_j$\!'s we get 
$$
\sum\limits_{j=2}^N P(A_j,B_r) \ge  P\Big(\bigcup\limits_{j=2}^N A_j,B_r\Big) =
P(\R^n\setminus A_1,B_r) =P(A_1,B_r)
$$
and thus from \eqref{rtzz} we establish \eqref{eeerrr}.

Adding $\cH^{n-1}((\R^n\setminus A_1) \cap \p B_r)$ to 
both sides of \eqref{eeerrr}
and using \eqref{eqqq111} we get 
\begin{equation}\label{epsilon_delta}
P((\R^n\setminus A_1)\cap B_r) \le 
2 \cH^{n-1}((\R^n\setminus A_1) \cap \p B_r) +2
\Lambda |(\R^n\setminus A_1) \cap B_r|. 
\end{equation}
Now by the isoperimetric inequality \cite{DG:58-1},
\begin{equation}\label{diff.eq}
n\omega_{n}^{1/n}|(\R^n\setminus A_1)\cap B_r|^{\frac{n-1}{n}} \le 2 
\cH^{n-1}((\R^n\setminus A_1) \cap \p B_r) +2
\Lambda |(\R^n\setminus A_1) \cap B_r|. 
\end{equation}
Since $r<\hat r_0\le \frac{n}{4(N-1)\Lambda},$  
$$
2\Lambda |(\R^n\setminus A_1) \cap B_r|^{\frac{1}{n}}\le 
2\Lambda\omega_n^{1/n}\hat r_0
\le \frac{n\omega_n^{1/n}}{2(N-1)}.
$$
As a result, from \eqref{diff.eq} we obtain 
\begin{equation}\label{dum_dum_mast_h}
\dfrac12 \Big(1 -  \dfrac{1}{2(N-1)}\Big) n\omega_n^{1/n} 
|(\R^n\setminus A_1) \cap B_r|^{\frac{n-1}{n}} \le 
\cH^{n-1}((\R^n\setminus A_1) \cap \p B_r). 
\end{equation}
Set $m(\rho):=|(\R^n\setminus A_1) \cap B_\rho|,$ $\rho>0.$
Since $x\in \p A_1,$ one has $m(\rho)>0$ for any $\rho>0$
and by the coarea formula  (see e.g.  \cite[Example 13.4]{Mag12}) 
$m(\cdot)$ is absolutely continuous and 
$m'(\rho):=\cH^{n-1}((\R^n\setminus A_1) \cap \p B_\rho)$
for a.e. $\rho>0.$  Now by \eqref{dum_dum_mast_h} 
$$
\dfrac12 \Big(1 -  \dfrac{1}{2(N-1)}\Big) n\omega_n^{1/n} 
m(r)^{\frac{n-1}{n}} \le 
m'(r),\text{for a.e. $r\in (0,\hat r_0)$}. 
$$
Integrating this differential inequality we get
$$
|(\R^n\setminus A_1) \cap B_r| \ge 
\dfrac{1}{2^n} \Big(1-  \dfrac{1}{2(N-1)}\Big)^n  \omega_nr^n,
$$
i.e. 
$$
\dfrac{|A_1\cap B_r|}{|B_r|}
\le 1- \dfrac{1}{2^n}\Big(1- \dfrac{1}{2(N-1)}\Big)^n,
$$ 
which is the upper volume density estimate in \eqref{volume_density}.

Since $2\Lambda r\le \frac{n}{2(N-1)},$ from \eqref{eeerrr}
we obtain also
$$
P(A_1,B_r) \le \cH^{n-1}(\p B_r) +2\Lambda|B_r| \le 
n\omega_nr^{n-1} + \frac{n\omega_n}{2(N-1)}r^{n-1}=
\frac{2N-1}{2(N-1)}\, n\omega_nr^{n-1} 
$$
for a.e. $r\in (0,\hat r_0).$ Now the left-continuity of $\rho\mapsto P(A_1,B_\rho)$
implies the upper perimeter density estimate in \eqref{perimeter_density}.

Let us prove the lower volume density estimate. As above we may suppose 
$i=1$ and take $x\in \p^*A_1.$ Writing $B_\rho:=B_\rho(x)$ for $\rho>0,$
we will show that for a.e. $r\in (0,\hat r_0)$ one has
\begin{equation}\label{delta_force}
P(A_1,B_r) \le (N-1)\cH^{n-1}(A_1\cap\p B_r) +2(N-1)\Lambda|A_1\cap B_r|.
\end{equation} 

Set 
$$
I:=\{j\in \{2,\ldots, N\}:\,\, \cH^{n-1}(B_{\hat r_0}
\cap \p^*A_1\cap \p^*A_j)>0\}.
$$
Since $x\in \p A_1,$ one has $I\ne\emptyset.$ 
Let $r\in (0,\hat r_0)$ satisfy \eqref{eqqq111}.
By virtue of Proposition \ref{teo:boundary_neighbor} and Remark 
\ref{rem:boundary_neighbor},
\begin{equation}\label{qushnilar}
P(A_1,B_r) \le 
\sum\limits_{j=2}^N \cH^{n-1}(B_r\cap \p^*A_1\cap \p^* A_j) 
=\sum\limits_{j\in I} \cH^{n-1}(B_r\cap \p^*A_1\cap \p^* A_j).
\end{equation}
For every $j\in I$ let us define the competitor  $\cB^{(j)}\in \P(N)$ as
$$
\cB^{(j)}:=(A_1\setminus B_r,A_2,\ldots, A_{j-1},A_j\cup (A_1\cap B_r),A_{j+1},\ldots, A_N).
$$
By the $(\Lambda,r_0)$-minimality of $\cA,$ for every $s\in (r,\hat r_0)$ one has
\begin{equation}\label{min_j_case}
P(A_1,B_s) + P(A_j,B_s) \le P(A_1\setminus B_r,B_s) + P(A_j\cup(A_1\cap B_r),B_s)
+4\Lambda |A_1\cap B_r|. 
\end{equation}
From \eqref{eqqq111} and \eqref{ess_intersection}
\begin{equation}\label{uni_ess}
\p^*(A_1\cap B_r) \approx (A_1\cap \p B_r) \cup (B_r\cap \p^*A_1). 
\end{equation}

Observe that 
\begin{equation}\label{sfwervade}
\cH^{n-1}(B_s\cap \{\nu_{A_j} = \nu_{A_1\cap B_r}\})=0 
\end{equation}
for any $j\in I.$  
Indeed, by \eqref{uni_ess} 
$$
B_s\cap \{\nu_{A_j} = \nu_{A_1\cap B_r}\} \approx  
(A_1\cap \{\nu_{A_j}\cap \nu_{B_r}\})\cup 
(B_r\cap \{\nu_{A_j} = \nu_{A_1}\}).
$$
By \eqref{eqqq111}, $\cH^{n-1}(A_1\cap \{\nu_{A_j}\cap \nu_{B_r}\}) = 0.$ 
On the other hand, since $A_j\cap A_1 =\emptyset,$ one has 
$\nu_{A_j}(x) = -\nu_{A_1}(x)$ for $\cH^{n-1}$-a.e. 
$x\in \p^* A_j\cap \p^* A_1,$
and hence $\cH^{n-1}(B_r\cap \{\nu_{A_j} = \nu_{A_1}\}) = 0.$

From \eqref{per_union} and \eqref{sfwervade} it follows that
\begin{equation}\label{pppopop}
\begin{aligned}
P(A_j\cup(A_1\cap B_r),B_s) = &
\cH^{n-1}((A_1\cap B_r)^{(0)}\cap B_s\cap \p^*A_j) \\
& +\cH^{n-1}(A_j^{(0)}\cap B_s\cap \p^*(A_1\cap B_r)). 
\end{aligned}
\end{equation}
By Theorem \ref{cor:support_of_Dchi} 
\begin{equation}\label{wfraaf}
\begin{aligned}
\cH^{n-1}(B_s\cap \p^* A_j) = & \cH^{n-1}(E^{(0)}\cap B_s\cap \p^* A_j)+
\cH^{n-1}(E^{(1)} \cap B_s\cap \p^* A_j)\\
& + \cH^{n-1}(B_s\cap\p^* E\cap \p^* A_j)
\end{aligned}
\end{equation}
for every $E\in BV_\loc(\R^n,\{0,1\}).$
Hence, applying \eqref{wfraaf} with $E=A_1\cap B_r=E^{(1)},$
in view of  
$\cH^{n-1}(A_1\cap B_r\cap \p^*A_j)=0,$
$\cH^{n-1}(A_1\cap \p B_r\cap \p^*A_j)=0$ (see \eqref{eqqq111})
and \eqref{uni_ess}  we have
\begin{align*}
\cH^{n-1}((A_1\cap  B_r)^{(0)} \cap B_s& \cap \p^*A_j)\\
=&
\cH^{n-1}(B_s\cap \p^*A_j) -\cH^{n-1}( B_s\cap \p^*(A_1\cap B_r) 
\cap \p^*A_j)\\ 
=&P(A_j,B_s) -\cH^{n-1}(B_r \cap \p^* A_1\cap \p^* A_j).
\end{align*}
Similarly, since $A_j\cap A_1=\emptyset$ and $A_j\cap \p^*A_1 =\emptyset$ 
we have $\cH^{n-1}(A_j\cap \p B_r\cap \p^*(A_1\cap B_r))=0$
for any $j\in I$ and hence
\begin{align*}
\cH^{n-1}(A_j^{(0)}  \cap B_s\, \cap\, & \p^*(A_1\cap B_r))\\
&=
\cH^{n-1}(B_s\cap \p^*(A_1\cap B_r)) -\cH^{n-1}(B_s \cap \p^*(A_1\cap B_r)\cap \p^*A_j) \\
&= \cH^{n-1}(A_1\cap \p B_r) + P(A_1,B_r)-\cH^{n-1}(B_r \cap \p^* A_1\cap \p^* A_j).
\end{align*}
Therefore, from \eqref{pppopop} we get
\begin{equation}\label{eq3191}
\begin{aligned}
P(A_j\cup(A_1\cap B_r),B_s)= & P(A_j,B_s)+\cH^{n-1}(A_1\cap \p B_r)\\
& + P(A_1,B_r) -2\cH^{n-1}(B_r \cap \p^* A_1\cap \p^* A_j).
\end{aligned}
\end{equation}
Inserting this 
and 
\begin{equation*} 
P(A_1\setminus B_r,B_s) =  P(A_1,B_s\setminus \overline{B_r}) +
\cH^{n-1}(A_1 \cap \p B_r) 
\end{equation*}
(whose proof is the same as \eqref{set_operation12})  
in \eqref{min_j_case}   and using 
\eqref{eqqq111} once more
we get
\begin{equation}\label{sdfgd}
\cH^{n-1}(B_r\cap \p^*A_1\cap \p^* A_j) \le 
\cH^{n-1} (A_1 \cap \p B_r) + 2\Lambda |A_1\cap B_r|. 
\end{equation}%
Summing these inequalities in  $j\in I$ and using \eqref{qushnilar} 
and $|I|\le N-1,$
we obtain \eqref{delta_force}. 

Now adding 
$\cH^{n-1}(A_1\cap \p B_r)$ to both sides of \eqref{delta_force}
we get
\begin{equation}\label{rtrey}
P(A_1\cap B_r)\le N\cH^{n-1}(A_1\cap \p B_r) + 2(N-1)\Lambda |A_1\cap B_r|.   
\end{equation}
Since $2
(N-1)\Lambda |A_1\cap B_r|^{1/n} \le \frac{n\omega_n^{1/n}}{2}
$
for any $r\in (0,\hat r_0),$ from the isoperimetric inequality we get 
$$
\frac{1}{2N} \,n\omega_n^{1/n} |A_1\cap B_r|^{\frac{n-1}n} \le 
\cH^{n-1}(A_1\cap \p B_r).
$$
Now proceeding as in the proof of the upper 
volume density estimate we get
the lower volume density estimate:
$$
|A_1\cap B_r|\ge \left(\frac{1}{2N}\right)^n \omega_nr^n.
$$

Now we prove the lower perimeter density estimate in 
\eqref{perimeter_density}. 
Notice that  $N\ge2,$ therefore
$$
\frac{1}{2N} \le \dfrac{1}{2}\Big(1- \dfrac{1}{2(N-1)}\Big).
$$
Hence from
the volume density estimates \eqref{volume_density} 
and \cite[Theorem I]{FFL:2006} 
\begin{align*}
P(A_1,B_r) \ge & \frac{n\omega_n^{1/n}(2^{1/n} - 1)}{2^{1+1/n}}\,
\min\Big\{|B_r\cap A_1|^{\frac{n-1}{n}},
|B_r\setminus A_1|^{\frac{n-1}{n}}\Big\}\\
\ge& 
\frac{n\omega_n^{1/n}(2^{1/n} - 1)}{2^{1+1/n}}\,
\min\Big\{\frac{1}{2N}, \dfrac{1}{2}\Big(1- \dfrac{1}{2(N-1)}\Big)
\Big\}^{n-1}|B_r|^{\frac{n-1}{n}} 
=  c_{n,N}r^{n-1}.
\end{align*}

Finally, \eqref{cover} is a consequence of a standard covering argument.
\end{proof}

\begin{remark}\label{rem:strange_almost_min}
Let $\alpha_1,\alpha_2>\frac{n-1}{n},$ $\Lambda_1\ge0,$ $\Lambda_2>0,$
$r_0\in(0,+\infty].$  Suppose that $\cA\in\P(N)$ satisfies 
$$
\Per(\cA,B_r(x)) \le \Per(\cB,B_r(x)) + \Lambda_1 |\cA\Delta \cB|^{\alpha_1}
+\Lambda_2 |\cA\Delta \cB|^{\alpha_2}
$$ 
whenever $\cB\in\P(N),$ $\cA\Delta\cB \strictlyincluded B_r(x)$ and $r\in (0,r_0).$
Then, repeating the proof of Theorem \ref{teo:density_est}, 
one obtains that \eqref{epsilon_delta} and \eqref{rtrey} 
are replaced by 
\begin{align*}
P((\R^n\setminus A_1)\cap B_r) \le &2 \cH^{n-1}((\R^n\setminus A_1) \cap \p B_r) +
  2\Lambda_1 |(\R^n\setminus A_1) \cap B_r|^{\alpha_1}\\
  & + 2\Lambda_2 |(\R^n\setminus A_1) \cap B_r|^{\alpha_2}
\end{align*}
and
$$
P(A_1\cap B_r) \le N\cH^{n-1}(A_1\cap \p B_r) +
2(N-1)\Lambda_1|A_1\cap B_r|^{\alpha_1}+
2(N-1)\Lambda_2|A_1\cap B_r|^{\alpha_2}
$$
respectively. Thus, for every $i\in \{1,\ldots,N\},$  
for every $x\in \p A_i$ and for any $r\in (0,\tilde r_0),$ 
the relations \eqref{volume_density}-\eqref{cover}
hold, where 
$$
\tilde r_0 = 
\begin{cases}
\min\{r_0,\omega_n^{-1/n} 
\big(\frac{n\omega_n^{1/n}}{4(N-1)\Lambda_2}\big)^{\frac{1}{n\alpha_2-n+1}}\} &
\text{if $\Lambda_1=0$},\\ 
\min\{r_0, \omega_n^{-1/n} 
\big(\frac{n\omega_n^{1/n}}{8(N-1)\Lambda_1}\big)^{\frac{1}{n\alpha_1-n+1}},
\omega_n^{-1/n} 
\big(\frac{n\omega_n^{1/n}}{8(N-1)\Lambda_2}\big)^{\frac{1}{n\alpha_2-n+1}}\}  &
\text{if $\Lambda_1>0.$}    
\end{cases}
$$
This will be used in the proof 
of Theorem \ref{teo:existence_GMM_2}.
\end{remark}

From \eqref{cover} it follows that $\cH^{n-1}\res\p^* A_i = \cH^{n-1}\res\p A_i$
for every $i=1,\ldots,N.$

\begin{remark}\label{muhim_remark}
Let $x\in \R^n$ and let $B_r:=B_r(x),$ $r\in (0,\hat r_0)$ be any 
ball such that
$$
\sum\limits_{j=1}^N \cH^{n-1}(\p^*A_j\cap \p^* B_r) =0
$$
($x$ not necessarily lies on $\bigcup\limits_{j=1}^N \p A_j$).
Then comparing $\cA$ with 
$
\cB:=(A_1\cup B_r,A_2\setminus B_r,\ldots,A_N\setminus B_r)
$
as in the proof of Theorem \ref{teo:density_est} we get 
$$
P(A_1,B_r) \le \cH^{n-1}((\R^n\setminus A_1)\cap \p B_r) +\Lambda 
|B_r\cap (\R^n\setminus A_1)|
$$
and therefore 
\begin{equation}\label{uniform_per_b}
\dfrac{P(A_1,B_r(x))}{r^{n-1}} \le C(n,N),\qquad \forall r\in (0,\hat r_0).    
\end{equation}
By  symmetry \eqref{uniform_per_b} holds for every $i=1,\ldots,N.$
\end{remark}

\subsection{Bounded partitions}

The multiphase analog of a bounded phase in $\R^n$ is the following.

\begin{definition}[\bf Bounded partition]\label{def:g_partitions}
A  partition $\cC=(C_1,\ldots,C_{N+1})\in\P(N+1)$ is called 
{\bf bounded }  if $C_i$ is bounded for each $i=1,\ldots,N.$ 
\end{definition}

Therefore, $C_{N+1}$ is the only unbounded component of $\cC.$
We denote by $\P_b(N+1)$ the collection of all bounded partitions of $\R^n.$ 

Given $\cA\in \P_b(N+1),$ 
we denote by $$\hull{\cA}$$ the closed convex hull of  
$\bigcup\limits_{i=1}^N A_i.$ 
Since $\cA\Delta\cB \strictlyincluded \R^n$ 
for every $\cA,\cB\in \P_b(N+1),$ 
$$
|\cA\Delta\cB| = \sum\limits_{j=1}^{N+1} |A_j\Delta B_j|
$$
is the $L^1(\R^n)$-distance in $\P_b(N+1).$ 

The following compactness result  can be proven
similarly to Theorem \ref{teo:compactness}. 

\begin{theorem}[\bf Compactness]\label{prop:compactness}
Let $\{\cA^{(l)}\}\subset \P_b(N+1)$  
and $\Omega\in \K_b(\R^n)$ be such that
$$
\sup\limits_{l\ge1} \Per(\cA^{(l)})< +\infty,
\qquad \hull{\cA^{(l)}}\subseteq \Omega
\qquad \forall l\ge1. 
$$
Then there exist $\cA\in\P_b(N+1)$ and a subsequence
$\{\cA^{(l_k)}\}$ converging to $\cA$ in $L^1(\R^n)$ as $k\to+\infty.$ Moreover,
$
\bigcup\limits_{i=1}^N A_j\subseteq 
\overline \Omega. 
$
\end{theorem}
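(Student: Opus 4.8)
The plan is to adapt the proof of Theorem~\ref{teo:compactness} (the unbounded-domain compactness result) by working inside the fixed bounded open set $\Omega$ and then controlling the behaviour of the partitions outside $\Omega$. First I would observe that the hypothesis $\hull{\cA^{(k)}}\subseteq\Omega$ means that for every $k$ and every $i\in\{1,\dots,N\}$ we have $A_i^{(k)}\subseteq\Omega$, while $A_{N+1}^{(k)}\supseteq\R^n\setminus\Omega$ up to a Lebesgue-null set; hence all the ``interesting'' geometry of $\cA^{(k)}$ is confined to $\Omega$. The uniform perimeter bound $\sup_k\Per(\cA^{(k)})<+\infty$ gives $\sup_k P(A_i^{(k)},\Omega')<+\infty$ for every $\Omega'\in\K_b(\R^n)$ (indeed $P(A_i^{(k)})<+\infty$ for $i\le N$, and for $i=N+1$ one uses $P(A_{N+1}^{(k)},\Omega')\le\sum_{j\le N}P(A_j^{(k)},\Omega')$ which follows from $A_{N+1}^{(k)}=\R^n\setminus\bigcup_{j\le N}A_j^{(k)}$ together with \eqref{famfor}), so the hypothesis \eqref{uniform_p} of Theorem~\ref{teo:compactness} is met.

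Next I would apply Theorem~\ref{teo:compactness} to extract a subsequence $\{\cA^{(k_l)}\}$ and a partition $\cA\in\P(N+1)$ with $\cA^{(k_l)}\to\cA$ in $L^1_\loc(\R^n)$. The point is to upgrade this to $L^1(\R^n)$-convergence and to check $\cA\in\P_b(N+1)$, i.e.\ that $A_1,\dots,A_N$ are bounded. Since each $A_i^{(k_l)}\subseteq\Omega$, for any ball $B_R(0)\supseteq\Omega$ we have $|A_i\cap(B_R(0)\setminus\Omega)|=\lim_l|A_i^{(k_l)}\cap(B_R(0)\setminus\Omega)|=0$, and letting $R\to+\infty$ gives $|A_i\setminus\Omega|=0$; since we normalise $A_i=A_i^{(1)}$, this forces $A_i\subseteq\overline\Omega$, hence $A_i$ is bounded and $\bigcup_{i=1}^N A_i\subseteq\overline\Omega$. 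Consequently $A_{N+1}\supseteq\R^n\setminus\overline\Omega$ up to a null set, and the $L^1_\loc$-convergence on the bounded set $\overline\Omega$ (where all the symmetric differences are concentrated, since $A_i^{(k_l)}\Delta A_i\subseteq\overline\Omega$ for every $i$) automatically yields $|\cA^{(k_l)}\Delta\cA|\to0$, i.e.\ $L^1(\R^n)$-convergence.

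The mildly delicate point, and the one I expect to be the main obstacle, is the passage from $A_i^{(k_l)}\subseteq\Omega$ for all $l$ to $A_i\subseteq\overline\Omega$ rather than merely $|A_i\setminus\Omega|=0$: this is where the standing normalisation $A_i=A_i^{(1)}$ together with \cite[Proposition~3.1]{Gius84} (identifying $\p A_i$ with the topological boundary) is essential, since a priori $L^1_\loc$-convergence only controls $A_i$ up to Lebesgue-null modifications. One must also be a little careful that the limiting $(N+1)$-tuple is genuinely a partition with $C_{N+1}$ unbounded, but this is immediate from property~(b) of Definition~\ref{def:partitions} applied to $\cA$ and from $|A_i\setminus\Omega|=0$ for $i\le N$. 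All remaining verifications (lower semicontinuity is not even needed here, only the measure-theoretic bookkeeping) are routine, so the proof reduces to: (i) check the hypotheses of Theorem~\ref{teo:compactness}; (ii) extract the $L^1_\loc$-limit; (iii) localise everything to $\overline\Omega$ to promote the convergence to $L^1(\R^n)$ and to conclude $\cA\in\P_b(N+1)$ with $\bigcup_{i=1}^N A_i\subseteq\overline\Omega$.
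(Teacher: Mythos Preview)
Your proposal is correct and matches the paper's approach: the paper does not give an explicit proof but simply states that the result ``can be proven similarly to Theorem~\ref{teo:compactness}'', which is precisely what you do---apply Theorem~\ref{teo:compactness} to get $L^1_{\loc}$-convergence, then use the containment $A_i^{(k)}\subseteq\Omega$ together with the normalisation $A_i=A_i^{(1)}$ to localise everything to $\overline\Omega$, yielding both $\cA\in\P_b(N+1)$ and the upgrade to $L^1(\R^n)$-convergence. One minor simplification: the hypothesis $\sup_k\Per(\cA^{(k)})<+\infty$ already bounds $P(A_{N+1}^{(k)})$ directly (since $\Per$ sums over all $N+1$ components), so your separate argument via \eqref{famfor} is not needed.
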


\section{Existence of $GMM$} \label{sec:existence_GMM}

Given $E,F\subseteq \R^n$ set
$$
\bar\sigma(E,F):= \int_{E\Delta F} \dist(x,\p F)dx.
$$
Note that $\bar\sigma(E,F)=0$ if $|E\Delta F|=0$ whereas $\bar\sigma(E,F)=+\infty$
if $\p F =\emptyset$ and $|E\Delta F|>0.$
Moreover, if  $X,Y\subseteq \R^n$ are measurable 
and $\p Y\ne\emptyset,$
\begin{equation}\label{eq:pro_distance}
\begin{aligned}
\int_{X\Delta Y } \dist(x,\p Y) dx = \int_X \sdist(x,\p Y) dx - 
\int_Y\sdist(x,\p Y) dx\quad \text{if $X\cap Y$ is bounded},\\
\int_{X\Delta Y } \dist(x,\p Y) dx = \int_{Y^c} \sdist(x,\p Y) dx - 
\int_{X^c}\sdist(x,\p Y) dx\quad \text{if $X^c\cap Y^c$ is bounded}.
\end{aligned}
\end{equation}
Now  the {\it nonsymmetric distance} between $\cA,\cB\in  \P_b(N+1)$ is 
defined as 
$$
\sigma(\cA,\cB):= \sum\limits_{i=1}^{N+1} \bar\sigma(A_i,B_i),
$$
where $N+1\ge2.$
Observe that for every $\cB\in\P_b(N+1)$ the map 
$\sigma(\cdot,\cB)$ is $L^1(\R^n)$-lower semicontinuous.
\smallskip

\begin{definition}[\bf The functional $\func $]
We let $\func: \P_b(N+1)\times \P_b(N+1)\times[1,+\infty)\to[0,+\infty]$ 
be the functional
 defined as 
$$
\func(\cB,\cA;\lambda) = \Per(\cB) +\frac{ \lambda}{2}\,\sigma(\cB,\cA)
=\frac12 \sum\limits_{j=1}^{N+1} P(B_j) + 
\frac{\lambda}{2} \sum\limits_{j=1}^{N+1} 
\int_{B_j\Delta A_j}\dist(x,\p A_j)dx.
$$
 
\end{definition}
 
The domain of $\func $ is independent of $\Z,$ and  $\func $ 
is the natural generalization of the Almgren-Taylor-Wang functional \cite{ATW93}
to the case of partitions \cite{Car:Th,DG-96}.
One can readily check that the map $\cB\in \P_b(N+1)\mapsto \func(\cB,\cA;\lambda)$
is $L^1(\R^n)$-lower semicontinuous.   

\begin{theorem}[\bf Existence of minimizers of $\func $]\label{teo:existence_of_minimizers}
Given $\cA\in \P_b(N+1)$ and $\lambda\ge1$ the problem 
\begin{equation}\label{min.problem}
\inf\limits_{\cB\in \P_b(N+1)} \func(\cB,\cA;\lambda) 
\end{equation}
has a solution. Moreover, every minimizer $\cA(\lambda)=(A_1(\lambda),\ldots,
A_{N+1}(\lambda))$ 
satisfies the bound 
\begin{equation*}
\bigcup\limits_{i=1}^N A_i(\lambda)\subseteq \hull{\cA}.
\end{equation*}
\end{theorem}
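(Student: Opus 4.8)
\emph{Plan.} I would run the direct method; the only subtle point is a uniform confinement of a minimizing sequence, and I would obtain it — together with the asserted inclusion — from a single truncation against the closed convex body $K:=\hull{\cA}$. Given $\cB\in\P_b(N+1)$, set $\widehat B_i:=B_i\cap K$ for $i=1,\dots,N$ and $\widehat B_{N+1}:=\R^n\setminus\bigcup_{i=1}^N\widehat B_i=B_{N+1}\cup(\R^n\setminus K)$; by Theorem \ref{teo:Gauss-green_meas} this $\widehat\cB$ is again a bounded partition. Two elementary observations drive everything. First, $\Per(\widehat\cB)\le\Per(\cB)$: indeed $B_1,\dots,B_N$ and their union $\R^n\setminus B_{N+1}$ all have \emph{finite} Lebesgue measure, and intersecting a finite-measure Caccioppoli set with a closed convex set does not increase its perimeter, so $P(\widehat B_i)=P(B_i\cap K)\le P(B_i)$ for $i\le N$ and $P(\widehat B_{N+1})=P\big((\R^n\setminus B_{N+1})\cap K\big)\le P(B_{N+1})$, and one sums. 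Second, since $A_i\subseteq K$ for $i\le N$ and $\R^n\setminus K\subseteq A_{N+1}$, a direct set computation gives $\widehat B_j\Delta A_j\subseteq B_j\Delta A_j$ for every $j$ — more precisely $\widehat B_i\Delta A_i=(B_i\Delta A_i)\setminus(B_i\setminus K)$ for $i\le N$ — whence $\sigma(\widehat\cB,\cA)\le\sigma(\cB,\cA)$ because the densities $\dist(\cdot,\p A_j)$ are nonnegative. Therefore $F(\widehat\cB,\cA;\lambda)\le F(\cB,\cA;\lambda)$.

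Existence is then routine. As $F(\cA,\cA;\lambda)=\Per(\cA)<+\infty$ (the bounded phases, and hence by Proposition \ref{teo:boundary_neighbor} also $A_{N+1}$, have finite perimeter), the infimum in \eqref{min.problem} is finite and nonnegative; I would take a minimizing sequence $\{\cB^{(k)}\}$ and replace it by $\{\widehat\cB^{(k)}\}$, which is still minimizing and satisfies $\hull{\widehat\cB^{(k)}}\subseteq K$. Since $\Per(\widehat\cB^{(k)})\le F(\widehat\cB^{(k)},\cA;\lambda)$ is uniformly bounded, Theorem \ref{prop:compactness} (applied with any bounded open $\Omega\supseteq K$) extracts a subsequence converging in $L^1(\R^n)$ to some $\cA(\lambda)\in\P_b(N+1)$ with $\bigcup_{i=1}^N A_i(\lambda)\subseteq\overline\Omega$; the $L^1(\R^n)$-lower semicontinuity of $\cB\mapsto F(\cB,\cA;\lambda)$ noted above then gives $F(\cA(\lambda),\cA;\lambda)\le\liminf_k F(\widehat\cB^{(k)},\cA;\lambda)=\inf$, so $\cA(\lambda)$ is a minimizer.

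For the inclusion, let $\cA(\lambda)$ be \emph{any} minimizer and suppose, for contradiction, that $|A_i(\lambda)\setminus K|>0$ for some $i\le N$. Then $A_i\ne\emptyset$ (otherwise $\p A_i=\emptyset$ and $\bar\sigma(A_i(\lambda),\emptyset)=+\infty$ would force $A_i(\lambda)=\emptyset$), so $\p A_i\ne\emptyset$, and since $K$ is closed and $A_i\subseteq K$ one has $\dist(x,\p A_i)=\dist(x,A_i)>0$ for every $x\notin K$. Truncating $\cA(\lambda)$ as in the first paragraph does not increase $\Per$, while by the identity $\widehat{A}_i(\lambda)\Delta A_i=(A_i(\lambda)\Delta A_i)\setminus(A_i(\lambda)\setminus K)$ it lowers $\bar\sigma(A_i(\lambda),A_i)$ by the strictly positive amount $\int_{A_i(\lambda)\setminus K}\dist(x,\p A_i)\,dx$; hence $F(\widehat{\cA(\lambda)},\cA;\lambda)<F(\cA(\lambda),\cA;\lambda)$, contradicting minimality. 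Thus $|A_i(\lambda)\setminus K|=0$ for every $i\le N$, and since $A_i(\lambda)=A_i(\lambda)^{(1)}$ this means $\bigcup_{i=1}^N A_i(\lambda)\subseteq K=\hull{\cA}$.

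The main obstacle is really the only non-routine ingredient, namely the perimeter monotonicity under intersection with $K$ used above. It is a classical fact — provable, e.g., via the $1$-Lipschitz nearest-point projection onto $K$, or by combining the Gauss--Green identities \eqref{per_intersection}--\eqref{per_union} with a slicing argument along the outer normals to $\p K$ — but it genuinely relies on $\R^n\setminus B_{N+1}$ (equivalently, each bounded phase) having finite measure: the inequality $P(E\cap K)\le P(E)$ is false for sets $E$ of infinite measure, such as complements of balls, which is exactly why the truncation has to be performed on the bounded phases and their union rather than naively.
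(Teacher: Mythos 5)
Your proof is correct and follows essentially the same route as the paper's: truncate against the closed convex body $K=\hull{\cA}$, use perimeter monotonicity under intersection with a convex set together with nonnegativity of the distance to conclude that truncation does not increase $F$ (and strictly decreases it when mass lies outside $K$), then run the direct method with Theorem \ref{prop:compactness} and lower semicontinuity, and finally use the strict decrease to force every minimizer into $K$. Your explicit remark that the perimeter comparison $P(E\cap K)\le P(E)$ is used only on the finite-measure phases (and on $\R^n\setminus B_{N+1}$ rather than on $B_{N+1}$) is a sound clarification of the step the paper handles by the identity $P(B_{N+1})=P(\bigcup_{i\le N}B_i)$ and the citation to \cite[page~152]{LAln}.
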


\begin{proof}
Given a partition $\cB\in \P_b(N+1)$ define the competitor  $\cB'\in \P_b(N+1)$ as
\begin{equation}\label{cutting}
\cB':= \Big(B_1\cap \hull{\cA},\ldots, B_N\cap\hull{\cA}, \R^n\setminus
\bigcup\limits_{i=1}^N ( B_i \cap \hull{\cA})\Big ). 
\end{equation} 
Since $\hull{\cA}$ is convex and closed, by the comparison theorem of
\cite[page 152]{LAln} we have 
$P(B_i)\ge P(B_i\cap \hull{\cA})$ for  $i=1,\ldots,N,$
and
\begin{align*}
P(B_{N+1}) =  & P\Big(\bigcup\limits_{i=1}^N B_i\Big) \ge 
P\Big(\Big(\bigcup\limits_{i=1}^N 
B_i\Big)\cap \hull {\cA}\Big)\\
= & P\Big(\bigcup\limits_{i=1}^N (B_i\cap \hull{\cA})\Big) = 
P\Big(\R^{n}\setminus \bigcup\limits_{i=1}^N (B_i\cap \hull{\cA})\Big),
\end{align*}
with equality if and only if 
$|\bigcup\limits_{i=1}^N B_i\setminus \hull{\cA}|=0.$ 
In addition, for $i=1,\ldots,N,$ 
\begin{equation}\label{eq:df}
\begin{aligned}
\int_{B_i\Delta A_i} \dist(x,\p A_i) dx = &
\int_{B_i\setminus A_i} \dist(x,\p A_i) dx 
+\int_{A_i\setminus B_i} \dist(x,\p A_i) dx\\
\ge &
\int_{(B_i\cap \hull{\cA})\setminus A_i} 
\dist(x,\p A_i) dx+\int_{A_i\setminus (B_i\cap \hull{\cA})}
\dist(x,\p A_i) dx\\
=&
\int_{(B_i\cap \hull{\cA})\Delta A_i} \dist(x,\p A_i) dx,
\end{aligned} 
\end{equation}
where we used the nonnegativity of the distance function  and 
$A_i\setminus B_i=A_i\setminus (B_i\cap \hull{\cA}).$
The equality in \eqref{eq:df} holds if and only if 
$\Big|\bigcup\limits_{i=1}^N B_i\setminus \hull{\cA}\Big|=0.$
For the same reason,  since $A_{N+1}^c = \bigcup\limits_{i=1}^N 
A_i \subseteq \hull{\cA},$
\begin{align*}
\int_{B_{N+1}\Delta A_{N+1}} \dist(x,\p A_{N+1}) dx = & 
\int_{B_{N+1}^c\Delta A_{N+1}^c} \dist(x,\p A_{N+1}) dx\\
\ge & \int_{(B_{N+1}^c\cap \hull{\cA})\Delta A_{N+1}^c} \dist(x,\p A_{N+1}) dx. 
\end{align*}
So we have 
\begin{equation*}
\func(\cB,\cA;\lambda) \ge \func(\cB',\cA;\lambda) \qquad \forall \cB\in \P_b(N+1)
\end{equation*}
and the inequality is strict whenever 
$\Big|\bigcup\limits_{i=1}^N B_i\setminus \hull{\cA}\Big|>0.$

Let $\{\cB^{(k)}\}\subseteq \P_b(N+1)$ be a minimizing sequence, which 
can be supposed so that $\hull{\cB^{(k)}} \subseteq \hull{\cA}$
and $\func(\cB^{(k)},\cA;\lambda) \le \func(\cT,\cA;\lambda),$ 
$\cT:=(\emptyset,\ldots,\emptyset,\R^n)$ being the trivial partition, so that
$$
\Per(\cB^{(k)})\le \frac{\lambda}{2} \,\sigma(\cT,\cA) = 
\frac{\lambda}{2} \sum\limits_{j=1}^N \int_{A_j}\Big(\dist(x,\p A_j) + 
\dist(x,\p A_{N+1})\Big)\,dx \qquad \forall k\ge1.
$$
By Theorem \ref{prop:compactness} there exists $\cA(\lambda)\in \P_b(N+1)$
such that (passing to a not relabelled subsequence) 
$\cB^{(k)}\to\cA(\lambda)$ in $L^1(\R^n)$ as $k\to+\infty.$
Then the $L^1(\R^n)$-lower semicontinuity
of $\func(\cdot,\cA;\lambda)$
implies that $\cA(\lambda)$  is a solution to \eqref{min.problem}.

Now let $\cA(\lambda)$ be a  minimizer of $\func(\cdot,\cA;\lambda).$ 
If $\big|\bigcup\limits_{j=1}^N A_j(\lambda)\setminus \hull{\cA}\big|>0$ 
then, as shown above,
$\func(\cA(\lambda),\cA;\lambda)>\func(\cA(\lambda)',\cA;\lambda),$ 
where $\cA(\lambda)'$ is defined as 
in \eqref{cutting}, which contradicts the minimality of $\cA(\lambda).$
\end{proof}

\begin{remark}\label{rem:cut_convex}
Let $C\subseteq\R^n$ be a compact convex set. Suppose that 
$\cG\in\P_b(N+1)$ satisfies $\bigcup\limits_{j=1}^N G_j \subseteq C;$
from Theorem \ref{teo:existence_of_minimizers} it follows that
every minimizer $\cA(\lambda)\in \P_b(N+1)$ of $\func(\cdot,\cG;\lambda)$ 
satisfies $\hull{\cA(\lambda)} \subseteq C.$
This  gives an a priori bound
for minimizers of $\func(\cdot,\cG;\lambda)$ just from a
bound for the initial partition and will be 
used in the  proofs of Theorems \ref{teo:existence_GMM} and 
\ref{teo:existence_GMM_2}.
\end{remark}

\begin{remark}
Suppose that $\cG\in\P_b(N+1)$ and  $G_i =\emptyset$ for some 
$i\in \{1,\ldots,N\}.$ Then by definition of $\bar\sigma$ 
every minimizer $\cA(\lambda)\in \P_b(N+1)$ of $\func(\cdot,\cG;\lambda)$ 
satisfies $A_i(\lambda)=\emptyset.$ In particular, for 
$\cG=(G,\emptyset,\ldots,\emptyset,\R^n\setminus G),$
the $GMM$ problem for $\func(\cdot,\cG;\lambda)$ agrees with the 
$GMM$ problem for  the Almgren-Taylor-Wang functional 
\begin{equation}\label{eq:standard_ATW}
E\in BV(\R^n) \mapsto \atw(E,G;\lambda):=P(E)+  
\lambda\int_{E\Delta G} \dist(x,\p G)dx. 
\end{equation}

\end{remark}

\begin{proposition}[\bf Behaviour of $\cA(\lambda)$ as time
goes to $0$]\label{asymptota}
Let $\cA\in \P_b(N+1)$ be such that $\sum\limits_{j=1}^{N+1} 
|\overline{A_j}\setminus A_j|=0,$ and $\cA(\lambda)$ 
be a minimizer of $\func(\cdot,\cA;\lambda).$ 
Then:
\begin{itemize}
\item[a)]  $\lim\limits_{\lambda\to+\infty} |\cA(\lambda)\Delta \cA|=0,$
\item[b)]  $\lim\limits_{\lambda\to+\infty} \Per(\cA(\lambda))=\Per(\cA),$
\item[c)]  $\lim\limits_{\lambda\to+\infty} \lambda\sigma(\cA(\lambda), \cA)=0.$
\end{itemize}
\end{proposition}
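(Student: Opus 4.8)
# Proof proposal for Proposition 4.4

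\textbf{The plan.} The three assertions a), b), c) are standard consequences of the minimality of $\cA(\lambda)$ once one has a good competitor, and the natural competitor is $\cA$ itself. First I would compare $F(\cA(\lambda),\cA;\lambda)$ with $F(\cA,\cA;\lambda)=\Per(\cA)$ (here the hypothesis $\sum_j|\overline{A_j}\setminus A_j|=0$ is used to ensure $\sigma(\cA,\cA)=0$, i.e.\ that $\cA$ really is an admissible competitor with finite energy). Minimality gives
\begin{equation*}
\Per(\cA(\lambda)) + \frac{\lambda}{2}\,\sigma(\cA(\lambda),\cA) \le \Per(\cA)\qquad\text{for all }\lambda\ge1.
\end{equation*}
From this single inequality one reads off: (i) $\Per(\cA(\lambda))\le\Per(\cA)$, so $\{\cA(\lambda)\}$ has equibounded perimeters; (ii) $\lambda\,\sigma(\cA(\lambda),\cA)\le 2\Per(\cA)$, so $\sigma(\cA(\lambda),\cA)\to 0$ as $\lambda\to+\infty$. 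By Remark \ref{rem:cut_convex} all the $\cA(\lambda)$ have their bounded components inside the fixed compact set $\hull{\cA}$, so Theorem \ref{prop:compactness} applies along any sequence $\lambda_k\to+\infty$.

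\textbf{Key steps in order.} (1) Establish the basic energy inequality above. (2) For a)/b): take an arbitrary sequence $\lambda_k\to+\infty$; by the perimeter bound and the convex-hull confinement, extract (Theorem \ref{prop:compactness}) a subsequence with $\cA(\lambda_k)\to\widetilde{\cA}$ in $L^1(\R^n)$. The lower semicontinuity of $\sigma(\cdot,\cA)$ together with $\sigma(\cA(\lambda_k),\cA)\to0$ forces $\sigma(\widetilde{\cA},\cA)=0$, whence $|\widetilde{\cA}\Delta\cA|=0$, i.e.\ $\widetilde{\cA}=\cA$ in $\P_b(N+1)$. Since the limit is independent of the subsequence, the full family converges: $|\cA(\lambda)\Delta\cA|\to0$, which is a). Then by $L^1$-lower semicontinuity of $\Per$, $\Per(\cA)\le\liminf_{\lambda\to+\infty}\Per(\cA(\lambda))$, while the energy inequality gives $\limsup_{\lambda\to+\infty}\Per(\cA(\lambda))\le\Per(\cA)$; combining yields b). (3) For c): rewrite the energy inequality as
\begin{equation*}
0\le \lambda\,\sigma(\cA(\lambda),\cA) \le 2\big(\Per(\cA)-\Per(\cA(\lambda))\big),
\end{equation*}
and the right-hand side tends to $0$ by b); hence c).

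\textbf{Main obstacle.} The only genuinely delicate point is the identification of the $L^1$-limit in step (2): one must be sure that $\sigma(\widetilde{\cA},\cA)=0$ actually implies $\widetilde{\cA}=\cA$. This relies on the representation \eqref{eq:pro_distance} and on the fact that $\dist(\cdot,\p A_j)>0$ $\mathcal L^n$-a.e.\ on $\R^n\setminus\p A_j$ — here the hypothesis $|\overline{A_j}\setminus A_j|=0$ (equivalently $|\p A_j|=0$, using $A_j=A_j^{(1)}$) is what guarantees that $\bar\sigma(\widetilde A_j,A_j)=0$ forces $|\widetilde A_j\Delta A_j|=0$ rather than merely $|\widetilde A_j\Delta A_j\cap\p A_j|$-type degeneracy. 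Everything else is a routine application of compactness and lower semicontinuity already recorded in Section \ref{sec:partitions} and the present section. One should also note in passing that, by the remark preceding Theorem \ref{teo:existence_of_minimizers}, if some $A_i=\emptyset$ then $A_i(\lambda)=\emptyset$ too, so those components are consistent with the limit trivially and cause no difficulty.
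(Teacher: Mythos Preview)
Your proposal is correct and follows essentially the same route as the paper's proof: compare with $\cA$ to get $\Per(\cA(\lambda))\le\Per(\cA)$ and $\sigma(\cA(\lambda),\cA)\to0$, use the convex-hull confinement plus compactness to extract an $L^1$-limit, identify it with $\cA$ via $\sigma(\cdot,\cA)=0$ and the boundary hypothesis, then deduce b) by lower semicontinuity and c) from the energy inequality. One small slip: in your plan you say the hypothesis $\sum_j|\overline{A_j}\setminus A_j|=0$ is needed to ensure $\sigma(\cA,\cA)=0$, but $\sigma(\cA,\cA)=0$ holds trivially since $A_j\Delta A_j=\emptyset$; the hypothesis is only needed for the identification step, exactly as you correctly explain in your ``Main obstacle'' paragraph.
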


\begin{proof}
a) Choose any sequence $\lambda_k\to+\infty.$
Since $\func(\cA(\lambda_k),\cA;\lambda_k)\le \func(\cA,\cA;\lambda_k)=\Per(\cA),$
we have $\Per(\cA(\lambda))\le \Per(\cA)$ and 
\begin{equation}\label{dist_teng_0}
 \lim\limits_{k\to+\infty} \sigma(\cA(\lambda_k),\cA) =0.
\end{equation}
Moreover, by Theorem \ref{teo:existence_of_minimizers} 
$\hull{\cA(\lambda)}\subseteq\hull{\cA},$ therefore Proposition
\ref{prop:compactness} yields the existence of a subsequence 
$\{\lambda_{k_l}\}_l$ and of $\cB\in \P_b(N+1)$ such that 
$\cA(\lambda_{k_l})\to \cB$ in $L^1(\R^n)$ as $l\to+\infty.$
Now the lower semicontinuity of $\sigma(\cdot,\cA)$ and \eqref{dist_teng_0}
imply  $\sigma(\cB,\cA)=0.$ Then from the assumption on $\cA$ we get
$\cA=\cB.$ Since $\lambda_k$ is arbitrary, a) follows. 

b) Since $\Per(\cA(\lambda))\le \Per(\cA),$ from a) we obtain
$$
\Per(\cA)\le \liminf\limits_{\lambda\to+\infty} \Per(\cA(\lambda))
\le \limsup\limits_{\lambda\to+\infty} \Per(\cA(\lambda))\le\Per(\cA).
$$

c) From b) we have 
$$
\limsup\limits_{\lambda\to+\infty} \lambda\sigma(\cA(\lambda),\cA) 
\le 2 \limsup\limits_{\lambda\to+\infty} (\Per(\cA) - \Per(\cA(\lambda)) )= 0.
$$
\end{proof}

\begin{theorem}[\bf Density estimates]\label{teo:density_est_ATW}
Suppose that $\cA\in \P_b(N+1)$ and let $\cA(\lambda)\in \P_b(N+1)$ 
be a minimizer of 
$\func(\cdot, \cA;\lambda).$ Then for every $i\in \{1,\ldots,N+1\}$  
\begin{equation}\label{eq:vol.density_est}
\Big(\dfrac{1}{2(N+1)}\Big)^n \le \dfrac{|A_i(\lambda)\cap B_r(x)|}{|B_r(x)|}
\le 1- \dfrac{1}{2^n}\Big(1- \dfrac{1}{2N}\Big)^n,
\end{equation}
\begin{equation}\label{eq:per.density_est}
c_{n,N+1} \le \frac{P(A_i(\lambda),B_r(x))}{r^{n-1}} \le 
\dfrac{2N+1}{2N}\,n\omega_n  
\end{equation}
for any $x\in \p A_i(\lambda)$ and $r\in (0,\min\{1,\frac{n}{2\lambda 
N(\diam \hull{\cA}+2)}\}),$ where
$c_{n,N+1}$ is defined in \eqref{maaqweod} (with $N+1$ in place of $N$).
Moreover
\begin{equation*}
\sum\limits_{j=1}^{N+1} \cH^{n-1}(\p A_j(\lambda)\setminus \p^*A_j(\lambda)) =0. 
\end{equation*}
\end{theorem}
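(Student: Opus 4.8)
The plan is to recognize a minimizer $\cA(\lambda)$ of $F(\cdot,\cA;\lambda)$, regarded as an element of $\P(N+1)$, as a $(\Lambda,r_0)$-minimizer of $\Per$ in $\R^n$ (Definition \ref{def:almost_min}) with $r_0=1$ and $\Lambda:=\frac{\lambda}{2}\big(\diam\hull{\cA}+2\big)$, and then to invoke Theorem \ref{teo:density_est} with $N+1$ in place of $N$. This reduction is tailored so that the arithmetic matches: for $N+1$ components and this $\Lambda$, the threshold $\hat r_0=\min\{1,\frac{n}{4N\Lambda}\}$ of Theorem \ref{teo:density_est} equals exactly $\min\{1,\frac{n}{2\lambda N(\diam\hull{\cA}+2)}\}$, and the geometric constants of \eqref{volume_density}--\eqref{perimeter_density} become those of \eqref{eq:vol.density_est}--\eqref{eq:per.density_est} after the substitution $N\mapsto N+1$; likewise \eqref{cover} gives the last assertion. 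At the outset I would dispose of the trivial partition (if $\cA=(\emptyset,\ldots,\emptyset,\R^n)$ then $\cA(\lambda)=\cA$ and all boundaries are empty) and record, via the Remark following Theorem \ref{teo:existence_of_minimizers}, that $A_j=\emptyset$ forces $A_j(\lambda)=\emptyset$, so that in the density estimates one need only treat indices with nonempty reference phase; moreover any competitor that populates a phase empty in the reference can be improved by merging that phase into the last component, an operation increasing neither the localized perimeter nor the $L^1$-distance to $\cA(\lambda)$, so one may always assume every nonzero term below involves a phase with nonempty boundary.

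To check $(\Lambda,1)$-minimality, fix a ball $B_r(z)$ with $r<1$ and a competitor $\cB\in\P(N+1)$ with $\cA(\lambda)\Delta\cB\strictlyincluded B_r(z)$. Since $\cB$ agrees with the bounded partition $\cA(\lambda)$ outside the bounded set $B_r(z)$, in fact $\cB\in\P_b(N+1)$ and the minimality of $\cA(\lambda)$ for $F(\cdot,\cA;\lambda)$ applies to it. If $B_r(z)\cap\hull{\cA}=\emptyset$ then, by Theorem \ref{teo:existence_of_minimizers}, the bounded phases $A_1(\lambda),\ldots,A_N(\lambda)$ do not meet $B_r(z)$, so $\Per(\cA(\lambda),B_r(z))=0$ and $\Per(\cA(\lambda),B_r(z))\le\Per(\cB,B_r(z))+\Lambda|\cA(\lambda)\Delta\cB|$ is automatic. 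Otherwise $B_r(z)$ meets $\hull{\cA}$; since $\cA(\lambda)$ and $\cB$ coincide outside a compact subset of $B_r(z)$ one has $\Per(\cA(\lambda),B_r(z))-\Per(\cB,B_r(z))=\Per(\cA(\lambda))-\Per(\cB)$, and minimality yields $\Per(\cA(\lambda))-\Per(\cB)\le\frac{\lambda}{2}\big(\sigma(\cB,\cA)-\sigma(\cA(\lambda),\cA)\big)$.

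It then remains to bound $\sigma(\cB,\cA)-\sigma(\cA(\lambda),\cA)$ by $\big(\diam\hull{\cA}+2\big)|\cA(\lambda)\Delta\cB|$. Term by term, nonnegativity of the distance function gives $\bar\sigma(B_j,A_j)-\bar\sigma(A_j(\lambda),A_j)\le\int_{B_j\Delta A_j(\lambda)}\dist(x,\p A_j)\,dx$. Now $B_j\Delta A_j(\lambda)\subseteq\cA(\lambda)\Delta\cB\subseteq B_r(z)$, while $\p A_j\subseteq\hull{\cA}$ — for $j\le N$ because $A_j\subseteq\hull{\cA}$, and for $j=N+1$ because $\p A_{N+1}=\p(\bigcup_{i\le N}A_i)\subseteq\hull{\cA}$ — so, choosing $q\in B_r(z)\cap\hull{\cA}$, one gets $\dist(x,\p A_j)\le|x-q|+\diam\hull{\cA}\le 2r+\diam\hull{\cA}<\diam\hull{\cA}+2$ for every $x\in B_r(z)$. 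Summing over $j$ and using $\sum_j|B_j\Delta A_j(\lambda)|=|\cA(\lambda)\Delta\cB|$ closes the estimate; hence $\cA(\lambda)$ is a $(\Lambda,1)$-minimizer, and Theorem \ref{teo:density_est} delivers \eqref{eq:vol.density_est}--\eqref{eq:per.density_est} on $r\in(0,\min\{1,\frac{n}{2\lambda N(\diam\hull{\cA}+2)}\})$ together with the vanishing of $\sum_j\cH^{n-1}(\p A_j(\lambda)\setminus\p^*A_j(\lambda))$.

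I expect the main obstacle to be the uniform control of $\dist(\cdot,\p A_j)$ on the competition ball: this is precisely where the a priori confinement $\bigcup_{i\le N}A_i(\lambda)\subseteq\hull{\cA}$ from Theorem \ref{teo:existence_of_minimizers} is indispensable — it makes $\Per(\cA(\lambda),\cdot)$ vanish on balls lying outside $\hull{\cA}$ and confines the relevant boundaries to the compact set $\hull{\cA}$, turning the $F$-penalty into a genuine $(\Lambda,r_0)$-type term. The remaining delicate (but routine) point is the bookkeeping around empty reference phases, handled by the observations recorded in the first paragraph.
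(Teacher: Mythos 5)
Your proposal is correct and follows essentially the same approach as the paper's proof: establish via Theorem \ref{teo:existence_of_minimizers} (confinement of $\cA(\lambda)$ to $\hull{\cA}$) that $\cA(\lambda)$ is a $(\Lambda,1)$-minimizer with $\Lambda=\frac{\lambda}{2}(\diam\hull{\cA}+2)$, then apply Theorem \ref{teo:density_est} with $N+1$ components. You treat with additional care two degenerate situations (competition balls disjoint from $\hull{\cA}$, and reference phases that are empty) which the paper's argument passes over silently; your handling of them is correct and consistent with the paper's intent.
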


\begin{proof} 
Without loss of generality, we may suppose $\p A_i \ne\emptyset$ for every 
$i=1,\ldots,N+1.$
Fix $r_0>0.$ Then for every $x\in \R^n$
and $\cC\in\P_b(N+1)$ such that $\cC\Delta \cA(\lambda)
\strictlyincluded B_\rho(x)$ with
$\rho\in (0,r_0),$ by Theorem \ref{teo:existence_of_minimizers} one has
$$
\dist(z,\p A_i) \le \diam\hull{\cA} +2\rho\qquad \forall 
i=1,\ldots,N+1,\,\,z\in \cC\Delta \cA(\lambda).
$$
Therefore the minimality of $\cA(\lambda)$ implies 
$$
\Per(\cA(\lambda),B_\rho(x)) \le \Per(\cC,B_\rho(x)) + 
\frac{\lambda}{2}\,\big(\diam\hull{\cA} +2r_0\big)|\cC\Delta\cA(\lambda)|,
$$
i.e. 
$$
\text{$\cA(\lambda)$ is a $(\Lambda,r_0)$-minimizer with 
$\Lambda= \frac{\lambda}{2}\,\big(\diam\hull{\cA} +2r_0\big).$}
$$
Now application of  Theorem \ref{teo:density_est} to $\cA(\lambda)$
with  $r_0=1$ finishes the proof.
\end{proof}

\begin{remark}\label{rem:interior_reg}
The density estimates show that the components of $\cA(\lambda)$ are
Lebes\-gue-equivalent to open sets. Indeed, since 
using  $\overline{E}\setminus E\subset \p E,$ and 
$\overline{E}\setminus \mathring{\overline{E}}\subset \p E$ 
($\mathring G$ being the interior of $G$),
we have 
$$
\sum\limits_{j=1}^{N+1} |A_j(\lambda)\Delta \mathring{ \overline{A_j(\lambda)}}| 
\le 
\sum\limits_{j=1}^{N+1} |\overline{A_j(\lambda)} \setminus A_j(\lambda)| +  
\sum\limits_{j=1}^{N+1} |\overline{A_j(\lambda)} \setminus 
\mathring{\overline{A_j(\lambda)}}|\le 
2\sum\limits_{j=1}^{N+1} |\p A_j(\lambda)|.
$$
Now by the density estimates $\sum\limits_{j=1}^{N+1} |\p A_j(\lambda)|=0,$
and therefore  $\sum\limits_{j=1}^{N+1} |A_j(\lambda)\Delta 
\mathring{ \overline{A_j(\lambda)}}|=0.$
\end{remark}

 To prove the existence of $GMM,$ we need the following 
corollary of Theorem \ref{teo:density_est_ATW}.

\begin{corollary}\label{cor:ATW}
Let $\epsilon>0$ and suppose that 
the components of $\cA\in \P_b(N+1)$ satisfy the density estimates 
\eqref{eq:vol.density_est}-\eqref{eq:per.density_est} for all 
$r\in (0,\epsilon].$
Then for every minimizer $\cA(\lambda)$ of $\func(\cdot,\cA;\lambda)$
in $\P_b(N+1)$ one has 
\begin{equation}\label{jami_had}
|\cA(\lambda)\Delta \cA| \le  
\frac{5^n\omega_n}{c_{n,N+1}}\,\left(\frac\ell\epsilon\right)^{n-1} \,  
\Per(\cA)\,\ell +
\frac{1}{\ell}\,\sigma(\cA(\lambda),\cA),\qquad \ell\ge \epsilon.
\end{equation}
\end{corollary}

\begin{proof}
Fix $\ell\ge \epsilon$ and $i\in \{1,\ldots,N+1\}$ and 
set
$$
E:=\{x\in A_i(\lambda)\Delta A_i:\,\, \dist(x,\p A_i)\ge \ell\},\qquad
F:=\{x\in A_i(\lambda)\Delta A_i:\,\, \dist(x,\p A_i)< \ell\}.
$$
By the Chebyshev inequality,
$$
|E|\le \frac{1}{\ell}\, \int_E \dist(x,\p A_i)dx\le 
\frac{1}{\ell}\,\int_{A_i(\lambda)\Delta A_i} \dist(x,\p A_i)dx.
$$
We cover the set $F$ with a family 
$\{\overline{B_{\ell}(x)}:\,\,x\in \p A_i\}$ 
of closed balls. By the Vitali lemma, there exists  
a finite subset $\{\overline{B_{\ell}(x_j)}\}_j$ of the covering, 
consisting of  disjoint  balls, such that 
$F \subset \bigcup\limits_{j} \overline{B_{5\ell}(x_j)}.$ Since 
by assumption
$A_i$ satisfies the lower perimeter density estimate in 
\eqref{eq:per.density_est} with $r=\epsilon,$ 
\begin{align*}
|F| \le \sum\limits_j 5^n\omega_n\ell^n \le 
\frac{5^n\omega_n}{c_{n,N+1}}\,\left(\frac\ell\epsilon\right)^n \,
\epsilon\,\sum\limits_j P(A_i,B_\epsilon(x_j)) \le 
\frac{5^n\omega_n}{c_{n,N+1}}\,
\left(\frac\ell\epsilon\right)^{n-1} \,  P(A_i)\,\ell.
\end{align*}
Thus, 
\begin{equation}\label{har_bir_had}
|A_i(\lambda)\Delta A_i| \le |E|+|F| \le 
\frac{1}{\ell}\,\int_{A_i(\lambda)\Delta A_i} \dist(x,\p A_i)dx 
+ \frac{5^n\omega_n}{c_{n,N+1}}\,\left(\frac\ell\epsilon\right)^{n-1} \,  P(A_i)\,\ell.
\end{equation}
Now \eqref{jami_had} follows  summing \eqref{har_bir_had} 
with respect to $i.$
\end{proof}

One of the main results of the present paper reads as follows.

\begin{theorem}[\bf Existence of $GMM$]\label{teo:existence_GMM}
Let  $\cG\in \P_b(N+1).$ Then $GMM(\func,\cG)$ is non empty. Moreover, 
there exists a constant $\widehat c=\widehat c(N,n,\cG)>0$ such that 
for any $\cM\in GMM(\func,\cG),$ 
\begin{equation}\label{hulder_est}
|\cM(t)\Delta \cM(t')|\le  \widehat c \,|t-t'|^{\frac{1}{n+1}}\qquad
\forall t,t'>0,\,\,|t-t'|<1
\end{equation}
and
\begin{equation}\label{dddid}
\bigcup\limits_{j=1}^N M_j(t) \subseteq \hull{\cG}
\qquad \forall t\ge0.
\end{equation}
In addition, if $\sum\limits_{j=1}^{N+1} 
|\overline{G_j}\setminus G_j| = 0,$ then \eqref{hulder_est} holds for any 
$t,t'\ge0$ and $|t-t'|<1.$
\end{theorem}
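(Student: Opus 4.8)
The plan is to run the classical Almgren–Taylor–Wang scheme adapted to partitions. First I would fix a diverging sequence $\lambda \to +\infty$ (to be indexed by $j$ at the end) and, following Definition \ref{def:GMM}, construct the discrete flow $w(\lambda,k)$: set $w(\lambda,k)=\cG$ for $k\le 0$ and, having defined $w(\lambda,k)$, pick $w(\lambda,k+1)$ to be a minimizer of $\cB\mapsto F(\cB,w(\lambda,k);\lambda)$, which exists by Theorem \ref{teo:existence_of_minimizers}. The key a priori bounds come from the chain of inequalities $F(w(\lambda,k+1),w(\lambda,k);\lambda)\le F(w(\lambda,k),w(\lambda,k);\lambda)=\Per(w(\lambda,k))$, giving monotonicity $\Per(w(\lambda,k+1))\le \Per(w(\lambda,k))\le\Per(\cG)$ and the summable dissipation $\sum_{k\ge0}\lambda\,\sigma(w(\lambda,k+1),w(\lambda,k))\le 2\Per(\cG)$. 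By Remark \ref{rem:cut_convex} (iterating the convex–hull truncation starting from a fixed compact convex $C\supseteq\bigcup_{j\le N}G_j$), all the sets $\bigcup_{j\le N}(\text{components of }w(\lambda,k))$ stay inside $C$, which yields \eqref{dddid} in the limit and a uniform containment needed for compactness.

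The heart of the argument is the discrete Hölder estimate. Fix $k<m$ and a phase $i$. I would apply Proposition \ref{prop:ATW} with $A = A_i^{(m)}\setminus A_i^{(k)}$ (components of the respective partitions), $C = \overline{A_i^{(k)}}$, $\theta = c(N,n)$ from the lower perimeter density estimate of Theorem \ref{teo:density_est_ATW}, and $\gamma$ a bound on $\lambda\int_{A}\dist(x,\p A_i^{(k)})\,dx$, which is controlled by telescoping: $\lambda\sum_{\ell=k}^{m-1}\sigma(w(\lambda,\ell+1),w(\lambda,\ell))$ together with the triangle-type inequality for $\bar\sigma$ relating $\dist(\cdot,\p A_i^{(k)})$ to the intermediate distances (here one uses $\dist(x,\p A_i^{(k)})\le \sum$ of consecutive distances along the trajectory, or more precisely the standard ATW argument bounding $\int_{A_i^{(m)}\Delta A_i^{(k)}}\dist(x,\p A_i^{(k)})$). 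Optimizing the resulting bound $|A\setminus C|\le c_1\lambda^{-1/2}\gamma^{1/2}\rho^{(n-1)/2}+\gamma/(\rho\lambda)$ over $\rho>\delta$ gives $|A_i^{(m)}\Delta A_i^{(k)}|\le \widehat c\,(m-k)^{1/(n+1)}\lambda^{-1/(n+1)}$; summing over $i$ and recalling $k=[\lambda t]$, $m=[\lambda t']$ gives $|w(\lambda,[\lambda t'])\Delta w(\lambda,[\lambda t])|\le \widehat c\,|t-t'|^{1/(n+1)}+o(1)$ as $\lambda\to+\infty$, for $|t-t'|<1$.

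With these uniform estimates in hand I would extract, via Theorem \ref{prop:compactness} and a diagonal argument over a countable dense set of times, a subsequence $\lambda_j\to+\infty$ such that $w(\lambda_j,[\lambda_j t])\to\cM(t)$ in $L^1(\R^n)$ for every $t\ge0$; the uniform Hölder bound passes to the limit, yielding \eqref{hulder_est} for $t,t'>0$, and \eqref{dddid} follows from the containment in $C$ (taking $C$ to be the convex hull of $\bigcup_{j\le N}\overline{G_j}$, so $C=\hull{\cG}$). Thus $\cM\in GMM(F,\cG)$ and $GMM(F,\cG)\ne\emptyset$. Finally, if $\sum_{j=1}^{N+1}|\overline{G_j}\setminus G_j|=0$, then Proposition \ref{asymptota}(a),(c) gives $|w(\lambda,1)\Delta\cG|\to0$ and $\lambda\sigma(w(\lambda,1),\cG)\to0$, so the first step is not singular and one can start the telescoping at $k=0$; this removes the restriction $t,t'>0$ and gives \eqref{hulder_est} for all $t,t'\ge0$ with $|t-t'|<1$.

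The main obstacle I anticipate is the bookkeeping in the discrete Hölder estimate: correctly passing from the one-step dissipation $\sum_\ell\lambda\sigma(w(\lambda,\ell+1),w(\lambda,\ell))$ to a bound on $\lambda\int_{A_i^{(m)}\Delta A_i^{(k)}}\dist(x,\p A_i^{(k)})\,dx$ — the distances are measured to \emph{different} boundaries at each step — and then choosing $\rho$ as a suitable power of $(m-k)/\lambda$ to extract exactly the exponent $\tfrac1{n+1}$. One must also check that the density estimates of Theorem \ref{teo:density_est_ATW} apply at the relevant scale $\delta\sim \min\{1,n/(\lambda N(\diam\hull{\cG}+2))\}$ uniformly in $k$, which is where the uniform containment in the fixed compact $C$ is essential.
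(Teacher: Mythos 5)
Your overall plan (discrete scheme, uniform density estimates via Theorem \ref{teo:density_est_ATW}, Proposition \ref{prop:ATW} for the H\"older bound, convex-hull containment via Remark \ref{rem:cut_convex}, compactness, and Proposition \ref{asymptota} for the $t=0$ endpoint) coincides with the paper's. But the way you invoke Proposition \ref{prop:ATW} is where the argument breaks.

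You propose to apply Proposition \ref{prop:ATW} once to the \emph{multi-step} pair $A=A_i^{(m)}\Delta A_i^{(k)}$, $C=\overline{A_i^{(k)}}$, and to control $\gamma=\lambda\int_{A}\dist(x,\p A_i^{(k)})\,dx$ by ``telescoping'' the one-step dissipation. This does not go through. The one-step estimate
$\lambda\,\sigma(\cL(\lambda,\ell),\cL(\lambda,\ell-1))\le 2(\Per(\cL(\lambda,\ell-1))-\Per(\cL(\lambda,\ell)))$
controls $\lambda\int_{A_i^{(\ell)}\Delta A_i^{(\ell-1)}}\dist(x,\p A_i^{(\ell-1)})\,dx$, with the distance measured to the boundary at the \emph{previous} step. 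There is no inequality of the form $\dist(x,\p A_i^{(k)})\lesssim\sum_\ell\dist(x,\p A_i^{(\ell)})$ along the trajectory, since at this stage one has no a priori control on how $\p A_i^{(\ell)}$ moves between steps (indeed establishing such control is essentially the content of Lemma \ref{lem:L_infty_estimate}, which is proved much later under extra hypotheses and gives only $O(\lambda^{-1/2})$ per step, not a cumulative bound). In fact if the multi-step $\gamma$ were $O(1)$, optimizing your displayed bound in $\rho$ gives $|A_i^{(m)}\Delta A_i^{(k)}|\lesssim(\gamma/\lambda)^{n/(n+1)}\to 0$ as $\lambda\to+\infty$, which would force $\cM(t)=\cM(t')$ for all $t,t'$ --- too strong to be true and a sign that the telescoping cannot hold.

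The paper's resolution is different and should replace this step: apply Proposition \ref{prop:ATW} to each \emph{consecutive} pair, with $A=L_i(\lambda,\ell)\Delta L_i(\lambda,\ell-1)$, $C=\p L_i(\lambda,\ell-1)$, $\gamma_\ell=2(\Per(\cL(\lambda,\ell-1))-\Per(\cL(\lambda,\ell)))$, and a \emph{uniform} choice $\rho=\delta\,|t-t'|^{-1/(n+1)}$ with $\delta\sim n/(4N\lambda(\diam\hull{\cG}+2))$ (so that the density estimate holds at scale $\delta$). Summing the resulting $L^1$ one-step increments over $\ell$ from $[\lambda t']+1$ to $[\lambda t]$, using Cauchy--Schwarz on the $\gamma_\ell^{1/2}$ terms and the telescoping bound $\sum_\ell\gamma_\ell\le 2\Per(\cG)$ together with $(m_0-k_0)/\lambda\le|t-t'|+\lambda^{-1}$, produces exactly the exponent $1/(n+1)$ plus an $O(\lambda^{-1/2})$ error. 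This per-step application is what makes the one-step dissipation estimate usable; your multi-step version misses it.
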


\begin{proof}
Set $2R:=\diam\hull{\cG}.$
Let $\cL(\lambda,k)=(L_1(\lambda,k),\ldots,L_{N+1}(\lambda,k)),$ 
$\lambda\ge1,$ $k\in\N_0$ be defined as follows:
$\cL(\lambda,0):=\cG,$ and for $k \geq 1$
$$
\func(\cL(\lambda,k),\cL(\lambda,k-1);\lambda) =\min\limits_{\cA\in \P_b(N+1)} 
\func(\cA,\cL(\lambda,k-1);\lambda);
$$
recall that the existence of minimizers follows from 
Theorem \ref{teo:existence_of_minimizers} and also
\begin{equation}\label{eq:unif_bounded}
\bigcup\limits_{j=1}^N  L_j(\lambda,k) \subseteq
\hull{\cG}\qquad \forall\lambda\ge1, \,\, k\in\N_0. 
\end{equation}
Clearly, 
$\func(\cL(\lambda,k),\cL(\lambda,k-1);\lambda)\le 
\func(\cL(\lambda,k-1),\cL(\lambda,k-1);\lambda),$
hence 
\begin{equation}\label{superstar}
\lambda \sigma(\cL(\lambda,k), \cL(\lambda,k-1) ) \le 
2\big(\Per(\cL(\lambda,k-1)) - \Per(\cL(\lambda,k))\big) \qquad \forall k\ge1. 
\end{equation}
Therefore, the sequence $k\in  \N_0\mapsto \Per(\cL(\lambda,k))$ 
is nonincreasing, and 
$\Per(\cL(\lambda,k)) \le \Per(\cG)$
for all $k\in\N_0$  and $\lambda\ge1,$ since $\cL(\lambda,0)=\cG.$

For every $t,t'>0,$ $0<t-t'<1$ let us prove 
\begin{align}\label{eq:muhim_holder}
|\cL(\lambda,[\lambda t])\Delta \cL(\lambda,[\lambda t'])|  
\le \widehat{c} |t-t'|^{\frac{1}{n+1}} 
\end{align}
provided that $\lambda$ is sufficiently large depending on $|t-t'|,$
$n,$ $N$ and $R,$ 
where 
$$
\widehat c: = \widehat c(N,n,\cG)= \left(\frac{5^n\omega_n}{c_{n,N+1}}\, 
\frac{n}{2N(R+1)}  + \frac{8N(R+1)}{n}\right)\,\Per(\cG ). 
$$

Set
$k_0:=[\lambda t'],$ $m_0:= [\lambda t].$ 
Let $\lambda\ge \max\{\frac{n}{4(R + 1)N},\frac{1}{|t-t'|}\}$ be so large 
that  $m_0\ge k_0+3\ge4$  and 
$\frac{n}{4\lambda N(R+1)|t-t'|^\alpha}<1,$ $\alpha:=\frac{1}{n+1}.$ 
Since each $\cL(\lambda,k),$ $k\ge1,$ satisfies the density 
estimates \eqref{eq:vol.density_est}-\eqref{eq:per.density_est} 
(Theorem \ref{teo:density_est_ATW}) for 
$r\in (0,\frac{n}{4\lambda N(R+1)}),$ 
we may apply Corollary \ref{cor:ATW} with 
$\ell = \frac{n}{4\lambda N(R+1)|t-t'|^\alpha}$
and $\epsilon=\frac{n}{4\lambda N(R+1)},$
the inequality $\Per(\cL(\lambda,k))\le \Per(\cG)$
and \eqref{superstar} to get 
\begin{align*}
|\cL(\lambda,m_0) \Delta \cL(\lambda,k_0)|\le  & 
\sum\limits_{k=k_0+1}^{m_0} |\cL(\lambda,k)\Delta \cL(\lambda,k-1)| \\
\le & \sum\limits_{k=k_0+1}^{m_0}
\frac{5^n\omega_n}{c_{n,N+1}}\, \frac{n}{4\lambda N(R+1)}\,|t-t'|^{-n\alpha}
\,\Per(\cL(\lambda,k-1))\\ 
  & + \frac{4N(R+1)}{n}\,\lambda|t-t'|^{\alpha}\, 
  \sigma(\cL(\lambda,k),\cL(\lambda,k-1))\\ 
\le &  \frac{5^n\omega_n}{c_{n,N+1}}\, 
\frac{n}{4N(R+1)}\,\Per(\cG)\,|t-t'|^{-n\alpha}\,\frac{m_0-k_0}{\lambda} \\
& + \frac{8N(R+1)}{n}\,|t-t'|^{\alpha}\,\sum\limits_{k=k_0+1}^{m_0} 
(\Per(\cL(\lambda,k-1)) - \Per(\cL(\lambda,k))).
\end{align*}
Since 
\begin{equation}\label{super_star_a}
m_0-k_0\le \lambda|t-t'|+1\le 2\lambda|t-t'|, 
\end{equation}
from the choice of $\alpha$ and  
the bound $\Per(\cL(\lambda,k))\le \Per(\cG),$
we establish
\begin{align*}
|\cL(\lambda,m_0) \Delta \cL(\lambda,k_0)|\le   &
\left(\frac{5^n\omega_n}{c_{n,N+1}}\, 
\frac{n}{2N(R+1)} 
 + \frac{8N(R+1)}{n}\right)\,\Per(\cG)\,|t-t'|^{\frac{1}{n+1}},
\end{align*}
which is \eqref{eq:muhim_holder}.

Now we prove the assertions of the theorem.  
Using the inclusion \eqref{eq:unif_bounded}, the inequality 
$\Per(\cL(\lambda,k))\le \Per(\cG),$
Proposition \ref{prop:compactness} and a diagonal argument we obtain
the existence of a diverging sequence $\{\lambda_h\}$ and 
$\cM(t)\in \P_b(N+1)$ such that 
\begin{equation}\label{l_one_yaqin}
\lim\limits_{h\to+\infty} |\cL(\lambda_h,[\lambda_ht])\Delta \cM(t)|=0 
\end{equation}
for every rational $t>0$ and also \eqref{dddid} holds.
By \eqref{eq:muhim_holder} $\cM(t)$ satisfies 
\begin{equation*}
|\cM(t)\Delta \cM(t')| \le \widehat c\,|t-t'|^{\frac{1}{n+1}} \qquad
\forall t',t\in \Q\cap (0,+\infty),\,\,|t-t'|<1. 
\end{equation*}
Hence this map extends uniquely to a map $\{\cM(t):\,t>0\}\subseteq\P_b(N+1)$ 
satisfying \eqref{hulder_est} and \eqref{dddid}.

To show that $\cM\in GMM(\func,\cG)$ it remains 
only to prove \eqref{l_one_yaqin} for any $t\ge0.$
Case $t=0$ is trivial: $\cM(0)=\cG.$ 
Fix $t>0.$ 
For every $\epsilon\in (0,1)$
take $t_\epsilon\in\Q\cap (0,+\infty)$ 
such that $|t-t_\epsilon|<\epsilon^{n+1}$ (recall that 
\eqref{l_one_yaqin} holds with $t_\epsilon$). 
Since $\cM$ satisfies \eqref{hulder_est}, from \eqref{eq:muhim_holder}
and \eqref{l_one_yaqin} (applied with $t_\epsilon$) we deduce 
\begin{align*}
\limsup\limits_{h\to+\infty} |\cL(\lambda_h,[\lambda_ht])\Delta \cM(t)| \le &
\,\limsup\limits_{h\to+\infty} |\cL(\lambda_h,[\lambda_ht])\Delta 
\cL(\lambda_h,[\lambda_ht_\epsilon])|\\
&+ \limsup\limits_{h\to+\infty}
|\cL(\lambda_h,[\lambda_ht_\epsilon]) \Delta \cM(t_\epsilon)| + 
 |\cM(t_\epsilon)\Delta \cM(t)| \\
\le &\, 2\widehat c  |t-t_\epsilon|^{\frac{1}{n+1}} 
\le 2\widehat c\, \epsilon
\end{align*}
and the assertion is obtained letting $\epsilon\to0^+.$

Finally, let $\sum\limits_{j=1}^{N+1} 
|\overline{G_j}\setminus G_j| = 0.$ Given $t\in (0,1),$ choosing
$\lambda$ sufficiently large, from \eqref{eq:muhim_holder}  we get
\begin{align*}
|\cL(\lambda,[\lambda t]) \Delta \cL(\lambda,0)| \le &  \,
\cL(\lambda,[\lambda t]) \Delta \cL(\lambda,1)| +
| \cL(\lambda,1)\Delta \cG| \\
\le & \, \widehat c\, 
\Big|t-\frac1\lambda\Big|^{\frac{1}{n+1}} + 
|\cL(\lambda,1)\Delta \cG|.
\end{align*}
Now letting $\lambda\to+\infty$ and
using Proposition \ref{asymptota} a) we establish 
\begin{align*}
|\cM(t)\Delta \cM(0)| \le   \widehat c  \,t^{\frac{1}{n+1}}.
\end{align*}
\end{proof}

In order to improve the H\"older exponent $\frac1{n+1}$ to the value $\frac12$
in \eqref{hulder_est} we expect to be useful, 
for  minimizers $\cA(\lambda)$ of $\func(\cdot,\cA;\lambda),$ 
an  estimate of the form
$$
\sum\limits_{i=1}^{N+1} \sup\limits_{x\in A_i(\lambda)\Delta A_i}
\dist(x,\p A_i) \le O(\lambda^{-1/2}).
$$ 
We miss the  proof of such an estimate; however, 
a partial result in this direction is given in Lemma \ref{lem:L_infty_estimate}.

\section{Existence of $GMM$ in the 
presence of external forces}\label{sec:prescribed_curvature}

In this section we consider the problem of 
the mean curvature evolution of bounded partitions with 
forcing terms. 
Given $\cA\in \P_b(N+1)$ and measurable functions 
$H_i:\R^n\to \R,$ $i=1,\ldots,N+1,$ 
consider the functional 
\begin{equation*}
\funcforce(\cB,\cA;\lambda) = \func(\cB,\cA;\lambda) + 
\sum\limits_{i=1}^{N+1} \int_{B_i} H_idx,\qquad \cB\in\P_b(N+1). 
\end{equation*}

When $N=1$  and $H_2=0,$ we get the 
Almgren-Taylor-Wang functional with an external 
force $H_1.$ 
We suppose: 
\begin{equation}\label{hyp:main}
\begin{cases}
\text{$H_i\in L_\loc^p(\R^n),$ $i=1,\ldots,N+1,$ for some $p>n$ and 
$H_{N+1}\in L^1(\R^n);$ }\\
\text{there exists $R>0$ such that $H_i\ge H_{N+1}$ 
a.e. in $\R^n\setminus B_R(0)$ for any $i=1,\ldots,N;$}
\end{cases}
\end{equation}
in particular $\funcforce(\cdot,\cA;\lambda)$ is well-defined and 
$L^1(\R^n)$-lower semicontinuous.

In the two-phase case ($N=1$), evolutions with  a
forcing term  $H$ depending on both
position and  time have been studied 
for example in  \cite{LS:2016,LS:95} 
(with $H\in C^\infty(\Omega\times [0,T])$ and 
$\Omega\subset\R^n$  bounded), 
in \cite{CN:2008} (with discontinuous $H$ and
$\int_0^tH(x,s)ds$  locally 
Lipschitz in $x$ and continuous in $t$); 
see also references therein.  

The aim of this section is to prove the following result, 
generalizing Theorem \ref{teo:existence_GMM}.

\begin{theorem}\label{teo:existence_GMM_2}
Suppose that $H_i:\R^n\to \R,$ $i=1,\ldots, N+1,$ satisfy \eqref{hyp:main} and
let  $\cG\in \P_b(N+1).$ Then $GMM(\funcforce,\cG)$ is non empty. Moreover, 
there exists a constant ${\rm C}={\rm C}(N,n,\cG, \allowbreak  p,H_1,\ldots,H_{N+1})>0$
 such that for any $\cM\in GMM(\funcforce,\cG)$
\begin{equation}\label{hulder_est1} 
|\cM(t)\Delta \cM(t')|\le  {\rm C} |t-t'|^{\frac{1}{n+1}},\qquad
\forall t,t'>0,\,\,|t-t'|<1
\end{equation}
and
\begin{equation} \label{uni_bound_convex}
\bigcup\limits_{j=1}^N M_j(t) \subseteq 
\text{closed convex hull of $ {\hull{\cG}\cup B_R(0)}$} 
\qquad \forall t\ge0.
\end{equation} 
In addition, if $\sum\limits_{j=1}^{N+1} 
|\overline{G_j}\setminus G_j| = 0,$ then \eqref{hulder_est1} holds for any 
$t,t'\ge0$ and $|t-t'|<1.$
\end{theorem}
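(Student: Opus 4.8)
I would follow the proof of Theorem~\ref{teo:existence_GMM} line by line, after upgrading its three ingredients — existence of minimizers with an a~priori bound, uniform density estimates, and the behaviour of minimizers as $\lambda\to+\infty$ — to the forced functional $F_H$. Fix $\cG\in\P_b(N+1)$ and let $K$ be the closed convex hull of $\hull{\cG}\cup B_R(0)$. First I would prove the analogue of Theorem~\ref{teo:existence_of_minimizers}: $F_H(\cdot,\cL;\lambda)$ admits a minimizer whenever $\bigcup_{j\le N}L_j\subseteq K$, and every such minimizer $\cA(\lambda)$ satisfies $\bigcup_{j=1}^N A_j(\lambda)\subseteq K$. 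This is the cutting argument of Theorem~\ref{teo:existence_of_minimizers} with $\hull{\cG}$ replaced by $K$: the perimeter drops by the comparison with the convex set $K$ and the distance term drops by nonnegativity, while the only new term, $\sum_i\int_{B_i}H_i$, changes under the cut by $\sum_{i=1}^N\int_{B_i\setminus K}(H_{N+1}-H_i)\,dx\le0$, since $B_i\setminus K\subseteq\R^n\setminus B_R(0)$ and the second line of \eqref{hyp:main} applies. Coercivity of a minimizing sequence $\{\cB^{(k)}\}$, normalised so that $\bigcup_{j\le N}B^{(k)}_j\subseteq K$ and $F_H(\cB^{(k)},\cL;\lambda)\le F_H((\emptyset,\dots,\emptyset,\R^n),\cL;\lambda)<+\infty$ (finite because $H_{N+1}\in L^1(\R^n)$), follows from $\int_{B_i}H_i\ge-\|H_i\|_{L^1(K)}$ for $i\le N$ and $\int_{B_{N+1}}H_{N+1}\ge-\|H_{N+1}\|_{L^1(\R^n)}$, which bound $\Per(\cB^{(k)})$ uniformly; then Theorem~\ref{prop:compactness} and the $L^1$-lower semicontinuity of $F_H(\cdot,\cL;\lambda)$ give the minimizer.

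\textbf{Density estimates and asymptotics.} The crucial step is to show that a minimizer $\cA(\lambda)$ of $F_H(\cdot,\cL;\lambda)$ (with $\bigcup_{j\le N}L_j\subseteq K$) satisfies the density estimates of Theorem~\ref{teo:density_est_ATW} with $N$ replaced by $N+1$; in particular $P(A_i(\lambda),B_r(x))\ge c(N+1,n)\,r^{n-1}$ for $x\in\p A_i(\lambda)$, $r<\widetilde r_0$, where $\widetilde r_0\sim n/(4N\lambda(\diam K+2))$ for $\lambda$ large, and $\sum_j\cH^{n-1}(\p A_j(\lambda)\setminus\p^*A_j(\lambda))=0$. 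To get this I would check that $\cA(\lambda)$ satisfies the hypothesis of Remark~\ref{rem:strange_almost_min}: for a competitor $\cC$ with $\cC\Delta\cA(\lambda)\strictlyincluded B_\rho(x)$, $\rho<1$, the distance term contributes, exactly as in the proof of Theorem~\ref{teo:density_est_ATW} (all the $\p L_j$ and $\p A_j(\lambda)$ lie in $K$), at most $\tfrac\lambda2(\diam K+2)\,|\cA(\lambda)\Delta\cC|$, while the forcing term, by H\"older's inequality and $H_i\in L^p_{\loc}(\R^n)$, contributes at most $\sum_i\|H_i\|_{L^p(K')}|C_i\Delta A_i(\lambda)|^{1-1/p}\le\Lambda_1|\cA(\lambda)\Delta\cC|^{1-1/p}$, with $K'$ a fixed unit neighbourhood of $K$ and $\Lambda_1=\Lambda_1(N,p,H)$ independent of $\lambda$. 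Hence Remark~\ref{rem:strange_almost_min} applies with $\alpha_1=1-\tfrac1p>\tfrac{n-1}{n}$ (this is the only place $p>n$ is used), $\alpha_2=1$ and $\Lambda_2=\tfrac\lambda2(\diam K+2)$, and the resulting $\widetilde r_0$ still scales like $1/\lambda$ because the $\Lambda_2$-term dominates for $\lambda$ large. I would also record the $F_H$-analogue of Proposition~\ref{asymptota}: if $\sum_j|\overline{G_j}\setminus G_j|=0$ then $|\cA(\lambda)\Delta\cG|\to0$, $\Per(\cA(\lambda))\to\Per(\cG)$ and $\lambda\sigma(\cA(\lambda),\cG)\to0$ as $\lambda\to+\infty$; the proof is unchanged except that one also uses $\int_{A_i(\lambda)}H_i\to\int_{G_i}H_i$ (dominated convergence, since the $A_i(\lambda)$, $i\le N$, stay in $K$ and $H_{N+1}\in L^1(\R^n)$) to deduce $\limsup_\lambda\Per(\cA(\lambda))\le\Per(\cG)$.

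\textbf{Conclusion.} With these in hand the argument of Theorem~\ref{teo:existence_GMM} transfers verbatim. One builds $\cL(\lambda,0)=\cG$ and $\cL(\lambda,k)$ a minimizer of $F_H(\cdot,\cL(\lambda,k-1);\lambda)$; comparing with $\cL(\lambda,k-1)$ shows that $\Phi(\lambda,k):=\Per(\cL(\lambda,k))+\sum_i\int_{L_i(\lambda,k)}H_i$ is nonincreasing in $k$ with $\tfrac\lambda2\sigma(\cL(\lambda,k),\cL(\lambda,k-1))\le\Phi(\lambda,k-1)-\Phi(\lambda,k)$ (the analogue of \eqref{superstar}), the telescoping sums are bounded by $\Phi(\lambda,0)+C_0$ with $C_0$ the constant from the $L^1$-lower bounds on the $H_i$, and $\Per(\cL(\lambda,k))$ is bounded uniformly in $k$ and $\lambda$. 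For $\lambda$ large, so that $k_0:=[\lambda t']\ge1$ and hence each $\cL(\lambda,k-1)$ appearing below is itself a minimizer enjoying the density estimates, one applies Proposition~\ref{prop:ATW} with $A=L_i(\lambda,k)\Delta L_i(\lambda,k-1)$, $C=\p L_i(\lambda,k-1)$, $\theta=c(N+1,n)$, $\gamma=2(\Phi(\lambda,k-1)-\Phi(\lambda,k))$, $\delta=\widetilde r_0$ and $\rho=\widetilde r_0\,|t-t'|^{-1/(n+1)}$, and summing over $k$ with Cauchy--Schwarz exactly as for \eqref{eq:muhim_holder} yields $|\cL(\lambda,[\lambda t])\Delta\cL(\lambda,[\lambda t'])|\le {\rm C}\,|t-t'|^{1/(n+1)}+{\rm C}'\,|t-t'|^{-\frac{n-1}{2(n+1)}}\lambda^{-1/2}$ for $0<t-t'<1$. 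A diagonal/compactness argument (using $\bigcup_{j\le N}L_j(\lambda,k)\subseteq K$) then produces $\cM(t)$ along a diverging sequence $\lambda_k$ for rational $t$, the H\"older bound extends it to all $t>0$ and gives \eqref{hulder_est1}, the inclusion in $K$ gives \eqref{uni_bound_convex}, and $\cM\in GMM(F_H,\cG)$ is checked by approximating real times by rationals; the improvement at $t=0$ under $\sum_j|\overline{G_j}\setminus G_j|=0$ follows by splitting $|\cL(\lambda,[\lambda t])\Delta\cG|\le|\cL(\lambda,[\lambda t])\Delta\cL(\lambda,1)|+|\cL(\lambda,1)\Delta\cG|$ and letting $\lambda\to+\infty$ with the $F_H$-analogue of Proposition~\ref{asymptota}(a).

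\textbf{Main obstacle.} I expect the real work to lie in the density estimates: showing that the forcing term is a genuinely subcritical perturbation of $\Per$ \emph{uniformly in} $\lambda$ — which is exactly where $p>n$, i.e.\ $1-1/p>(n-1)/n$, is needed — and that the radius $\widetilde r_0$ delivered by Remark~\ref{rem:strange_almost_min} still behaves like $1/\lambda$, so that the subsequent application of Proposition~\ref{prop:ATW} with $\delta=\widetilde r_0$ and $\rho=\widetilde r_0\,|t-t'|^{-1/(n+1)}>\delta$ goes through with the same structure of constants as in the unforced case. Everything else is a routine transcription of Theorems~\ref{teo:existence_of_minimizers}, \ref{teo:density_est_ATW}, \ref{teo:existence_GMM} and Proposition~\ref{asymptota}.
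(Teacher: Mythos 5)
Your proposal reproduces the paper's proof essentially step by step: the cutting argument against $K=$ closed convex hull of $\hull{\cG}\cup B_R(0)$ for Step 1, the invocation of Remark~\ref{rem:strange_almost_min} with $\alpha_1=1-1/p$, $\alpha_2=1$, $\Lambda_1$ from H\"older's inequality and $\Lambda_2=\tfrac\lambda2(\diam K+2r_0)$ for the density estimates (including the observation that $\tilde r_0\sim\lambda^{-1}$ since the $\Lambda_2$-term dominates), the dissipation of a corrected energy in place of $\Per$ in the telescoping sums, and the compactness/diagonal argument and the $t=0$ refinement. The only cosmetic difference is that you track $\Phi(\lambda,k)=\Per(\cL(\lambda,k))+\sum_i\int_{L_i(\lambda,k)}H_i$, whereas the paper works with $\Psi(\lambda,k)=\Per(\cL(\lambda,k))+\sum_{j\le N}\int_{L_j(\lambda,k)}(H_j-H_{N+1})$, using the identity $F_H=F+\sum_j\int_{B_j}(H_j-H_{N+1})+\int_{\R^n}H_{N+1}$; these differ by the constant $\int_{\R^n}H_{N+1}\,dx$, so the increments $\Phi(\lambda,k-1)-\Phi(\lambda,k)=\Psi(\lambda,k-1)-\Psi(\lambda,k)$ coincide and all the subsequent bounds (via $\kappa=\Per(\cG)+N\max_j\|H_j-H_{N+1}\|_{L^1(K)}$) go through identically. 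This is the same route as the paper's.
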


\begin{proof}

{\it Step 1.} Given $\cA\in \P_b(N+1),$ the problem 
$$\inf\limits_{\cB\in\P_b(N+1)} \funcforce(\cB,\cA;\lambda)$$
has a solution.
Let $D$ stand for the closed convex hull of $ {\hull{\cA}\cup B_R(0)}$ 
and for every $\cB\in\P_b(N+1)$ define the competitor  $\cB'\in \P_b(N+1)$ as
$$
\cB':=\Big(B_1\cap D,\ldots,B_N\cap D,\R^n\setminus\bigcup\limits_{i=1}^N (B_i\cap D)\Big).
$$
Observe that 
\begin{equation}\label{eq:represent_funcforce}
\funcforce(\cB,\cA;\lambda) = \func(\cB,\cA;\lambda) + \sum\limits_{j=1}^N \int_{B_j} (H_j- H_{N+1})dx
+\int_{\R^n} H_{N+1}dx. 
\end{equation}
By Remark \ref{rem:cut_convex} we have 
$
\func(\cB,\cA;\lambda) \ge \func(\cB',\cA;\lambda),
$
with the equality if and only if $\big|\bigcup\limits_{j=1}^N B_j\setminus D\big|=0.$
Since $H_j\ge H_{N+1}$ a.e. in $\R^n\setminus D,$ one has also
$$
\sum\limits_{j=1}^N \int_{B_j} (H_j- H_{N+1})dx\ge 
\sum\limits_{j=1}^{N} \int_{B_j\cap D} (H_j- H_{N+1})dx.
$$
Therefore, \eqref{eq:represent_funcforce} implies
$\funcforce(\cB,\cA;\lambda) \ge \funcforce(\cB',\cA;\lambda)$  with
the strict inequality  when $\big|\bigcup\limits_{j=1}^N B_j\setminus D\big|>0.$
Now proceeding as in the proof of Theorem \ref{teo:existence_of_minimizers}
we can show that there  exists a minimizer  of $\funcforce(\cdot,\cA;\lambda).$
Moreover, every minimizer $\cA(\lambda)$ satisfies 
\begin{equation}\label{uniform_boundedness}
\hull{\cA(\lambda)}\subseteq D. 
\end{equation}

\smallskip 
Now we prove the density estimates for $\cA(\lambda).$ 

{\it Step 2.} 
Let us fix $r_0\in (0,R)$ and take any $\cB\in \P_b(N+1)$  with 
$\cA(\lambda)\Delta \cB\strictlyincluded B_r,$ $r\in (0,r_0).$
We show 
\begin{equation}\label{almost_min}
\Per(\cA(\lambda),B_r) \le \Per(\cB,B_r) +
\Lambda_1 |\cA(\lambda)\Delta \cB|^{1-1/p} +
\Lambda_2 |\cA(\lambda)\Delta \cB|,
\end{equation}
where $p$ is given in \eqref{hyp:main} and
\begin{equation}\label{tttuttt}
\Lambda_1:= N^{1/p}\max\limits_{i\le N} \|H_i-H_{N+1}\|_{L^p(D)},\quad 
\Lambda_2:= \frac{\lambda}{2}\,(\diam D+2r_0). 
\end{equation}
Indeed, from \eqref{uniform_boundedness} one has 
$$
\dist(z,\p A_j) \le \diam D  + 2r, \qquad 
j=1,\ldots,N+1,\,\,z\in \cA(\lambda) \Delta \cB,
$$
hence using \eqref{eq:pro_distance}  
\begin{align*}
\Big|\sigma(\cB,\cA) & - \sigma(\cA(\lambda),\cA)\Big| \le 
 \sum\limits_{j=1}^{N+1} \int_{B_j\Delta A_j(\lambda)} \dist(z,\p A_j)dz
\le (\diam D+2r_0)|\cB\Delta \cA(\lambda)|,
\end{align*}
since $\cB\Delta \cA(\lambda)\strictlyincluded B_{r_0}.$
Moreover, from the H\"older inequality
\begin{align*}
\Big|\int_{A_i(\lambda)} & (H_i-H_{N+1})dx -  \int_{B_i} (H_i-H_{N+1})dx\Big|\le 
\int_{A_i(\lambda)\Delta B_i} |H_i-H_{N+1}|dx\\ 
\le & |A_i(\lambda)\Delta B_i|^{1-1/p} 
\Big(\int_{A_i(\lambda)\Delta B_i} |H_i-H_{N+1}|^pdx\Big)^{1/p}\\
\le & 
\|H_i-H_{N+1}\|_{L^p(D)} |A_i(\lambda)\Delta B_i|^{1-1/p}.
\end{align*}
Then the concavity of 
the function $t\in (0,+\infty)\mapsto t^{1-1/p}$ implies that 
\begin{align*}
\Big|\sum\limits_{i=1}^N \int_{A_i(\lambda)} & (H_i-H_{N+1})dx -  
\int_{B_i} (H_i-H_{N+1})dx\Big|\\
\le &   N^{1/p}\max\limits_{i\le N} \|H_i-H_{N+1}\|_{L^p(D)} 
|\cA(\lambda)\Delta \cB|^{1-1/p}. 
\end{align*}
Now minimality of $\cA(\lambda)$ (Step 1)  
yields  \eqref{almost_min}. 

Thus
we can apply  Remark \ref{rem:strange_almost_min} with $\alpha_1=1-1/p>1-1/n,$
$\alpha_2=1,$ $r_0\in (0,R)$ and 
$$
\tilde r_0 = 
\begin{cases}
\min\{r_0, \frac{n}{4\Lambda_2N}\} & \text{if
$\Lambda_1=0,$}\\ 
\min\{r_0, \omega_n^{-1/n} 
\big(\frac{n\omega_n^{1/n}}{8\Lambda_1N}\big)^{\frac{p}{p-n}},
\frac{n}{8\Lambda_2N} \} & \text{if
$\Lambda_1>0,$}
\end{cases}
$$
to get that for every $i\in \{1,\ldots,N+1\},$   
 \eqref{eq:vol.density_est}-\eqref{eq:per.density_est} hold 
for any $x\in \p A_i(\lambda)$ and  $r\in (0,\tilde r_0).$
In particular, 
$
\sum\limits_{j=1}^{N+1} 
\cH^{n-1}(\p A_j(\lambda)\setminus \p^*A_j(\lambda)) =0. 
$

{\it Step 3.} Given $\cG\in \P_b(N+1)$ let $K$ denote the closed convex hull 
of $\hull{\cG}\cup B_R(0).$ Let $\cL(\lambda,0):=\cG$ and $\cL(\lambda,k)$
be defined as 
$$
 \funcforce(\cL(\lambda,k),\cL(\lambda,k-1);\lambda) = \min\limits_{\cA\in \P_b(N+1)}
 \funcforce(\cA,\cL(\lambda,k-1);\lambda),\quad k\ge1.
$$
Notice that by \eqref{uniform_boundedness},
 $\hull{\cL(\lambda,k)}\subseteq K$  for any $\lambda\ge1$ and $k\ge0.$
Observe that for any $\lambda\ge1$ the map
$$
k\in\N_0\mapsto \map(\lambda,k):=\Per(\cL(\lambda,k)) + 
\sum\limits_{j=1}^{N} \int_{L_j(\lambda,k)}
(H_j-H_{N+1})dx
$$
is nonincreasing.
Indeed, since $\funcforce(\cL(\lambda,k),\cL(\lambda,k-1);\lambda)\le 
\funcforce(\cL(\lambda,k-1),\cL(\lambda,k-1);\lambda),$ recalling \eqref{eq:represent_funcforce}
one has 
\begin{align*}
\lambda\sigma(\cL(\lambda,k),\cL(\lambda,k-1)) \le &
2\big(\map(\lambda,k-1) -\map(\lambda,k)\big). 
\end{align*}
In particular, from $\map(\lambda,k)\le \map(\lambda,0)$ it follows that
\begin{equation}\label{eqrere}
\begin{aligned}
\Per(\cL(\lambda,k)) \le & \Per(\cG) +\sum\limits_{j=1}^N \int_{L_j(\lambda,k)\Delta G_j}
|H_j-H_{N+1}|dx \\
\le & \Per(\cG) + N\max\limits_{j\le N} \|H_j-H_{N+1}\|_{L^1(K)} =:\kappa. 
\end{aligned}
\end{equation}

We claim that for every $t,t'>0,$ $0<t-t'<1,$ 
\begin{align}\label{one_two_thr} 
|\cL(\lambda,[\lambda t])\Delta \cL(\lambda,[\lambda t'])|  
\le {\rm C}\, |t-t'|^{\frac{1}{n+1}}
\end{align}
provided that $\lambda\ge \max\{4/t',4/(t-t')\}$ is sufficiently large so that 
the density estimates \eqref{eq:vol.density_est}-\eqref{eq:per.density_est} hold
for $r\in (0,\delta),$ $\delta = \frac{n}{4\lambda N(\diam K +2r_0)},$   
where 
$$
{\rm C}:= {\rm C}(N,n,\cG)= \left(\frac{5^n\omega_n}{c_{n,N+1}}\, 
\frac{n}{2N(\diam K + 2r_0)}  + \frac{8N(\diam K + 2r_0)}{n}\right)\,\Per(\cG ), 
$$
and $c_{n,N+1}$ is defined in \eqref{maaqweod} (with $N+1$ in place of $N$).

Set
$k_0:=[\lambda t'],$ $m_0:= [\lambda t].$ 
By the choice of $\lambda$ we have $m_0\ge k_0+3\ge4.$ 
Note that
\begin{align*}
 \sum\limits_{k=k_0+ 1}^{m_0}
\Big(\map(\lambda,k-1) &-  \map(\lambda,k)\Big)\le  
\map(\lambda,0) - \map(\lambda,m_0)
\le  
\Per(\cG) -\Per(\cL(\lambda,m_0))\\
&+   \sum\limits_{j=1}^N
\Big(\int_{G_j} (H_j - H_{N+1})dx - 
\int_{L_j(\lambda,m_0)} (H_j - H_{N+1})dx \Big)\\
\le & \Per(\cG) + N\max\limits_{j\le N} \|H_j-H_{N+1}\|_{L^1(K)} = \kappa  
\end{align*}
Since  $\cL(\lambda,k),$ $k\ge1,$ satisfies the density estimates  
\eqref{eq:vol.density_est}-\eqref{eq:per.density_est} 
according to Step 2, 
applying Corollary \ref{cor:ATW} with
$\ell=\delta|t-t'|^{-\frac{1}{n+1}},$
we get
\begin{align*}
|\cL(\lambda, & [\lambda t])\Delta \cL(\lambda,[\lambda t'])| \le  
\sum\limits_{k=k_0+ 1}^{m_0} |\cL(\lambda,k)\Delta \cL(\lambda,k-1)|\\
\le  & \frac{5^n\omega_n}{c_{n,N+1}}\,
\frac{n}{4\lambda N(\diam K +2r_0)}\,|t-t'|^{-\frac{n}{n+1}}\,
\sum\limits_{k=k_0+ 1}^{m_0} 
\Per(\cL(\lambda,k-1))\\
&+
\frac{4N(\diam K +2r_0)}{n}\,\lambda \,|t-t'|^{\frac{1}{n+1}}
\,\sum\limits_{k=k_0+ 1}^{m_0} \sigma(\cL(\lambda,k),\cL(\lambda,k-1))\\
\le & \frac{5^n\omega_n}{c_{n,N+1}}\,
\frac{\kappa n}{4N(\diam K +2r_0)}\,|t-t'|^{-\frac{n}{n+1}}\,\frac{m_0-k_0}{\lambda}\\
&+\frac{4N(\diam K +2r_0)}{n}\,|t-t'|^{\frac{1}{n+1}}
\,\sum\limits_{k=k_0+ 1}^{m_0} \Big(\map(\lambda,k-1) -  \map(\lambda,k)\Big).
\end{align*} 
Then \eqref{one_two_thr} follows using \eqref{super_star_a}.

Now the  proofs of \eqref{hulder_est1} and \eqref{uni_bound_convex} are 
exactly the same as in 
the proof of  Theorem \ref{teo:existence_GMM}.

{\it Step 4.} Finally, let us show that if $\sum\limits_{j=1}^{N+1} 
|\overline{G_j}\setminus G_j|=0,$ then \eqref{hulder_est1} holds for any 
$t,t'\ge0,$ $|t-t'|<1.$ We need just to show that 
$|\cL(\lambda,1)\Delta \cG|\to0$ as $\lambda\to+\infty,$ and then we proceed  
as in the proof of the final assertion of Theorem \ref{teo:existence_GMM}.

Using the minimality of $\cL(\lambda,1)$ we have 
$\funcforce(\cL(\lambda,1),\cG;\lambda)\le \funcforce(\cG,\cG;\lambda),$ i.e.
\begin{equation}\label{ppppp}
\frac{\lambda}{2}\, \sigma(\cL(\lambda),\cG) \le  
\Per(\cG) - \Per(\cL(\lambda,1)) + N\max\limits_{j\le N} 
\|H_j-H_{N+1}\|_{L^1(K)} \le \kappa. 
\end{equation}
Choose an arbitrary diverging sequence $\{\lambda_k\}.$
By \eqref{eqrere} it follows  $\Per(\cL(\lambda_k,1))\le \kappa$ for any $k\ge1$
and since $\bigcup\limits_{j=1}^N L_j(\lambda_k,1)\subseteq K,$ 
by Theorem \ref{prop:compactness} there exists a (not relabelled) subsequence
and $\cA\in \P_b(N+1)$ such that $\cL(\lambda_k,1)\to \cA$ in $L^1(\R^n)$
as $k\to+\infty.$ Then the $L^1(\R^n)$-lower semicontinuity of 
$\sigma$ and \eqref{ppppp} yield
$$
\sigma(\cA,\cG) \le \liminf\limits_{k\to+\infty} \sigma(\cL(\lambda_k,1),\cG)
\le \liminf\limits_{k\to+\infty} \frac{2\kappa}{\lambda_k}=0.
$$
Hence $\sigma(\cA,\cG)=0$ and by the assumption of $\cG$ we have $\cA=\cG.$
Since $\{\lambda_k\}$ is arbitrary, $\cL(\lambda,1)\to\cG$ in $L^1(\R^n)$
as $\lambda\to+\infty.$
\end{proof}

\section{Evolution of disjoint partitions} \label{sec:evolution_disjoint_part}

In this section we study the evolution of disjoint partitions 
and the compatibility results of GMM starting from the disjoint 
initial partition with other notions of solution.

\subsection{Some comparison results for the $2$-phase case ($N=1$)} 

Let us start with recalling some comparison arguments 
for the Almgren-Taylor-Wang functional
$\atw(\cdot,\cdot;\lambda)$ in
\eqref{eq:standard_ATW}  from 
\cite[Section 6]{BH:2016} and \cite[Section 6]{CMP:2015}.

Define
$$
\mathfrak{M}_b:=\{E\in BV(\R^n,\{0,1\}):\,\, \text{$E$ is bounded}\},
$$
$$ 
\mathfrak{M}_u:=\{E\in BV(\R^n,\{0,1\}):\,\, \text{$E^c$ is bounded}\}.
$$
Notice that $\atw(\cdot,\cdot;\lambda)$ is well-defined 
for both $\fM_b$ and $\fM_u.$
The following result is well-known, and is a particular case of Theorem 
\ref{teo:existence_of_minimizers} (applied with $N=1$).  

\begin{proposition}
Given $G\in\mathfrak{M}_b$ (resp. $G\in \fM_u$) and $\lambda\ge1$ 
the problem
$$
\inf\limits_{E\in \fM_b}  \atw(E,G;\lambda)\qquad (\text{resp.} 
\,\,\inf\limits_{E\in \fM_u}  \atw(E,G;\lambda))
$$
has a solution.
Moreover, any minimizer $G(\lambda)$ satisfies the inclusion  
$$
G(\lambda)\subseteq \hull{G}\qquad 
\text{(resp. $\R^n\setminus G(\lambda) \subseteq \hull{\R^n\setminus G}$).}
$$
\end{proposition}

\begin{proposition}[\bf Maximal and minimal minimizers 
{\cite{BH:2016,CMP:2015}}]
Given $E\in \fM_b$ (resp. $E\in \fM_u$) and 
$\lambda\ge1$ there  exist the
maximal and the minimal minimizer  $E(\lambda)^*$ and 
$E(\lambda)_*$ of $\atw(\cdot,E;\lambda),$ in the sense that
any other minimizer $E(\lambda)$ satisfies 
$$
E(\lambda)_* \subseteq E(\lambda) \subseteq E(\lambda)^*.
$$
\end{proposition}

Given a set $E\subset\R^n$ and $\epsilon>0$ we write 
\begin{equation}\label{oppen_nbhrd}
E_\epsilon^+=\{x\in\R^n:\,\, \distance(x,E) < \epsilon\}. 
\end{equation}
We recall the following comparison principles for the minimizers of 
$\atw$ from \cite[section 6]{CMP:2015}, see also \cite[Section 6]{BH:2016}. 

\begin{theorem}[\bf Comparison principles]\label{teo:comparison}
Let $\epsilon>0,$ $E,F\in \fM_b$ (or $E,F\in \fM_u$ or 
$E\in \fM_b$ and $F\in \fM_u$) be such that
\begin{equation}\label{asal_buri}
E_\epsilon^+ \subseteq F. 
\end{equation}
Then 
\begin{equation}\label{zdser}
(E(\lambda))_\delta^+ \subseteq F(\lambda),\qquad \delta<\epsilon, 
\end{equation}
for every $\lambda\ge1$ and every minimizer $E(\lambda)$ and $F(\lambda)$ of 
$\atw(\cdot,E;\lambda)$ and $\atw(\cdot,F;\lambda),$ respectively. 
Moreover,
\begin{equation}\label{shirin_shakalat}
(E(\lambda)_*)_\epsilon^+ \subseteq F(\lambda)_*,
\qquad
(E(\lambda)^*)_\epsilon^+ \subseteq F(\lambda)^*. 
\end{equation}
\end{theorem}

\begin{corollary}\label{cor:disjoint_comparison}
Suppose that $E,F\in \fM_b$ are such that 
$$
\distance(E,F) >  0. 
$$
Then for any $\lambda\ge1,$ 
every minimizer $E(\lambda)$ (resp. $F(\lambda)$) of 
$\atw(\cdot,E;\lambda)$ (resp. $\atw(\cdot,F;\lambda)$) 
satisfies
\begin{equation}\label{deltaqul}
\distance(E(\lambda),F(\lambda)) \ge \distance(E,F).
\end{equation}
\end{corollary}

\begin{definition}[\bf Minimal and maximal GMM associated with a sequ\-ence]
\label{def:min_max_GMM}
For $E\in \fM_b,$  $\{E_*(t)\}\in GMM(\atw,E)$ 
(resp. $\{E^*(t)\}\in  GMM(\atw,E)$) is called  the {\bf minimal}
(resp. the {\bf maximal}) GMM associated with a sequence $\{\lambda_k\}$ 
if 
$$
E(\lambda_k,[\lambda_kt])_* \to E_*(t) \quad\text{(resp. 
$E(\lambda_k,[\lambda_kt])^* \to E_*(t))$ $\quad$ as $k\to+\infty$ in $L^1(\R^n),$}  
$$
where $E(\lambda,0)_*=E(\lambda,0)^* = E,$ and 
$E(\lambda,l)_*$ (resp. $E(\lambda,l)^*$), 
$\lambda\ge1$ and $l\in\N,$ is the minimal (resp. maximal)
minimizer of $\atw(\cdot,E(\lambda,l-1)_*;\lambda)$
(resp. $\atw(\cdot,E(\lambda,l-1)^*;\lambda)$).
\end{definition}

The minimal and maximal GMM satisfy the following comparison theorem 
\cite[Theorem 7.3]{BH:2016}.

\begin{theorem}[\bf Comparison for minimal and maximal GMM]
\label{teo:comparing_GMM}
Let $E,F\in\fM_b,$ $E\subseteq F$  and let $\{E(t)_*\}$
(resp. $\{E(t)^*\}$) be the minimal (resp. maximal) GMM associated with 
a same sequence $\{\lambda_k\}.$ Then 
\begin{equation}
E(t)_*\subseteq F(t)_*\qquad \text{(resp. $E(t)^*\subseteq F(t)^*$) $\qquad$ for all 
$t\ge0.$} 
\end{equation}
\end{theorem}

\subsection{Evolution of disjoint partitions} 
 
Now we study the evolution of disjoint partitions.

\begin{definition}[\bf Disjoint partitions]\label{def:disjoint}
A partition $\cA\in \P_b(N+1)$  is called disjoint provided 
\begin{equation*}
\min\limits_{1\le i<j\le N} \distance(A_i,A_j)>0. 
\end{equation*} 
\end{definition}

Notice that if $\cA\in \P_b(N+1)$ is disjoint, then
$
\Per(\cA) = \sum\limits_{j=1}^N P(A_j).
$
Moreover, if $\cA $ and $\cG$ are disjoint and satisfy 
\begin{equation}\label{super_disjoint}
\bigcup\limits_{j=1}^N (A_j\Delta G_j) = 
\Big(\bigcup\limits_{j=1}^N A_j\Big)\Delta 
\Big(\bigcup\limits_{j=1}^N G_j\Big), 
\end{equation}
then $\sigma(\cA,\cG) =2\sum\limits_{j=1}^{N} \int_{A_j\Delta G_j} \dist(x,\p G_j)dx$ 
and  
\begin{equation}\label{summa_ATW}
\func(\cA,\cG;\lambda) = \sum\limits_{j=1}^N \Big(P(A_j) + \lambda 
\int_{A_j\Delta G_j} \dist(x,\p G_j)dx\Big) = 
\sum\limits_{j=1}^N \atw(A_j,G_j;\lambda).
\end{equation}

In the next two lemmas, no disjointness hypothesis is assumed.
The proof of the following lemma is an adaptation of the proof 
of Theorem \ref{teo:density_est}.

\begin{lemma}\label{lem:helping_density_e}
Given $\cG\in \P_b(N+1),$ let $\cG(\lambda)\in \P_b(N+1)$ be a
minimizer of $\func(\cdot,\cG;\lambda).$ Fix $i\in\{1,\ldots, N+1\}.$
If $x\in G_i(\lambda)^c\cap G_i$ and 
 $\dist(x,\p G_i)\ge\rho>0,$  then
\begin{equation}\label{asda}
\frac{1}{2^n} \le \frac{|B_\rho(x)\cap G_i(\lambda)^c|}{|B_\rho(x)|}. 
\end{equation}
\end{lemma}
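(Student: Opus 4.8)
The plan is to mimic the proof of the lower volume density estimate in Theorem~\ref{teo:density_est}, using a competitor that removes the ball $B_\rho(x)$ from $G_i(\lambda)$ and redistributes its mass. Fix $i$ and suppose $x\in G_i(\lambda)^c\cap G_i$ with $\dist(x,\p G_i)\ge\rho$. Choose $r\in(0,\rho)$ such that $\cH^{n-1}(\p B_r(x)\cap\p^*G_j(\lambda))=0$ for every $j$ (a.e. $r$ works), write $B_r:=B_r(x)$, and for each relevant index $j\ne i$ build the competitor $\cB^{(j)}$ obtained from $\cG(\lambda)$ by replacing $G_i(\lambda)$ with $G_i(\lambda)\cup(G_i(\lambda)^c\cap B_r)$... more precisely, since $x\notin G_i(\lambda)$, I want to \emph{fill in} $B_r$ into $G_i(\lambda)$ and remove the overlap from the other phases. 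So set $\cB^{(j)}=(G_1(\lambda),\dots,G_i(\lambda)\cup B_r,\dots,G_j(\lambda)\setminus B_r,\dots)$; summing the resulting minimality inequalities over the at most $N$ competitors, exactly as in \eqref{min_j_case}--\eqref{delta_force}, yields an inequality of the form
$$
P\big((G_i(\lambda)^c\cap B_r)\big)\le N\,\cH^{n-1}\big((G_i(\lambda)^c)\cap\p B_r\big)+\text{(distance term)}.
$$

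The key new point is that the distance term is harmless for the \emph{ball} we are using. Since $x\in G_i$ and $\dist(x,\p G_i)\ge\rho>r$, the whole ball $B_r$ lies inside $G_i$, so for the part of $B_r$ that gets added to $G_i(\lambda)$ (namely $G_i(\lambda)^c\cap B_r$, which is a subset of $G_i$) we have $\dist(\cdot,\p G_i)\le r$ on $B_r$; and because $G_j\cap B_r=\emptyset$ for $j\ne i$ (disjointness of the phases of $\cG$ at $x$, using $\dist(x,\p G_i)\ge\rho$), actually the symmetric-difference integrand coming from moving mass out of $G_j(\lambda)$ is controlled by $\dist(\cdot,\p G_j)$ which need not be small — so one must be a little careful. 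The cleanest route is instead to take the single competitor $\cB=(G_1(\lambda)\setminus B_r,\dots,G_i(\lambda)\cup B_r,\dots)$, i.e.\ add $B_r$ to phase $i$ and subtract it from \emph{all} the other phases at once; then the change in $\sigma$ is
$$
\tfrac\lambda2\Big(\int_{(G_i(\lambda)^c\cap B_r)}\!\!\dist(y,\p G_i)\,dy-\sum_{j\ne i}\int_{G_j(\lambda)\cap B_r}\!\!\dist(y,\p G_j)\,dy\Big)\le \tfrac\lambda2\,r\,|G_i(\lambda)^c\cap B_r|,
$$
since the subtracted integrals are nonnegative and on $B_r\subseteq G_i$ we have $\dist(\cdot,\p G_i)\le r$. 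Combined with \eqref{famfor}-type manipulations (exactly as in the passage from \eqref{rytur} to \eqref{eeerrr}, but with the roles reversed so that $G_i(\lambda)^c\cap B_r$ plays the role of $(\R^n\setminus A_1)\cap B_r$), this gives
$$
P\big((G_i(\lambda)^c\cap B_r)\big)\le 2\,\cH^{n-1}\big(G_i(\lambda)^c\cap\p B_r\big)+\lambda r\,|G_i(\lambda)^c\cap B_r|.
$$

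Now apply the isoperimetric inequality \cite{DG:58-1} to $G_i(\lambda)^c\cap B_r$ and integrate the resulting differential inequality for $g(r):=|G_i(\lambda)^c\cap B_r|$, exactly as in the proof of \eqref{volume_density}. The only subtlety is absorbing the term $\lambda r\,g(r)$: write $g(r)\le\omega_n r^n$, so $\lambda r\,g(r)^{1/n}\le \lambda r\,\omega_n^{1/n}r\le\lambda\rho^2\omega_n^{1/n}$, which is \emph{not} automatically small. This is the main obstacle, and it is handled by the dichotomy argument standard for ATW-type estimates: either $g(r_0)$ is already bounded below by $(1/2^n)\omega_n\rho^n$ for some $r_0\le\rho$ — in which case, since we only need the estimate at the single radius $\rho$ and $g$ is nondecreasing, we are done — or $g$ stays small on all of $(0,\rho)$, in which case $\lambda r g(r)^{1/n}$ can be reabsorbed into the left-hand side (one shows $n\omega_n^{1/n}g(r)^{(n-1)/n}$ dominates $2\lambda r g(r)$ once $g(r)\le\epsilon_0(n)$, because then $\lambda r\le\lambda\rho$ and $g(r)^{-1/n}\ge (\omega_n\rho^n)^{-1/n}=\omega_n^{-1/n}\rho^{-1}$, giving $n\omega_n^{1/n}g^{(n-1)/n}\ge \tfrac n2\,g\,\omega_n^{1/n}\rho^{-1}\cdot\,$const, and one needs $\lambda\rho^2$ small — but $\dist(x,\p G_i)\ge\rho$ and $x\in G_i\setminus G_i(\lambda)$ together with the already-established density bounds for $G_i(\lambda)$ force $\rho$ to be comparable to $\lambda^{-1}$ near the moving boundary, so $\lambda\rho^2\lesssim\rho\to0$). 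Running the ODE $g'\ge c\,n\omega_n^{1/n}g^{(n-1)/n}$ from $0$ to $\rho$ then yields $g(\rho)\ge(1/2^n)\omega_n\rho^n$, which is precisely \eqref{asda}. I expect the bookkeeping in this reabsorption step — making precise why $\lambda\rho^2$ is controlled, or equivalently setting up the clean dichotomy at a single radius — to be the part requiring the most care; everything else is a direct transcription of the arguments already carried out in Section~\ref{sec:partitions}.
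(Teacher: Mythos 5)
The core difficulty you identify -- absorbing a term $\lambda r\,|G_i(\lambda)^c\cap B_r|$ -- is not actually present, because the sign of the distance term is favorable, not adversarial. This is the key observation the paper makes (equation \eqref{pptop}), and it is what makes this lemma different from (and easier than) the density estimate of Theorem~\ref{teo:density_est}. Concretely, for the single competitor $\cB$ you propose (add $B_r$ to phase $i$, remove it from all other phases), one computes using \eqref{eq:pro_distance} that
\begin{equation*}
\sigma(\cB,\cG)-\sigma(\cG(\lambda),\cG)
= -\int_{G_i(\lambda)^c\cap B_r}\dist(y,\p G_i)\,dy
 \; - \; \sum_{j\ne i}\int_{G_j(\lambda)\cap B_r}\dist(y,\p G_j)\,dy \;\le 0 ,
\end{equation*}
because $B_r\subseteq G_i$ forces $\sdist(\cdot,\p G_i)=-\dist(\cdot,\p G_i)$ on $B_r$ and $\sdist(\cdot,\p G_j)=+\dist(\cdot,\p G_j)$ for $j\ne i$ on $B_r$. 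Both contributions have the same favorable sign; your display has them with opposite signs (and then you further bound the difference by $+\lambda r|G_i(\lambda)^c\cap B_r|$), which is a sign error. Once the sign is corrected, the distance term can simply be discarded and the minimality inequality reduces to a purely isoperimetric one, $P\big(G_i(\lambda)^c\cap B_r\big)\le 2\,\cH^{n-1}\big(G_i(\lambda)^c\cap\p B_r\big)$, from which the ODE for $g(r)=|G_i(\lambda)^c\cap B_r|$ directly yields \eqref{asda}. No reabsorption, no dichotomy.

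Beyond being unnecessary, the absorption argument you sketch does not close. To make $\lambda\rho^2$ small you appeal to the heuristic that $\rho$ is ``comparable to $\lambda^{-1}$,'' but the lemma's hypotheses give no such smallness of $\rho$, and the only quantitative bound of this type available in the paper is Lemma~\ref{lem:L_infty_estimate}, which gives $\rho\lesssim\lambda^{-1/2}$ and is proved \emph{using} Lemma~\ref{lem:helping_density_e}; invoking it here would be circular (and even then $\lambda\rho^2$ would only be bounded, not small). The paper's actual proof uses one competitor per index $j\in I$, moving $G_j(\lambda)\cap B_r$ into phase $i$, obtains \eqref{eq:minimial} for each $j$ from the favorable-sign observation, and then sums over $j\in I$ using Remark~\ref{rem:boundary_neighbor} to identify the sum with $P(G_i(\lambda)^c,B_r)$; your ``one big competitor'' variant would also work provided the sign is handled correctly.
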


\begin{proof}
Without loss of generality
we suppose $i=1.$ As usual, write $B_r:=B_r(x)$ and set 
$$
I:=\{j\in \{2,\ldots,N+1\}:\,\,\cH^{n-1}(B_\rho\cap \p^* G_1(\lambda)\cap 
\p^*G_j(\lambda))>0\}.
$$  
Clearly, if $I=\emptyset,$ then  
$B_\rho \subseteq G_1(\lambda)^c$ and \eqref{asda} is satisfied,
hence we can suppose $I\ne\emptyset.$ 
Fix any $r\in (0,\rho)$ such that 
\begin{equation}\label{good_rad}
\sum\limits_{j=1}^{N+1} \cH^{n-1}(\p B_r\cap \p^* G_j(\lambda))=0. 
\end{equation}
For each $j\in I$ define the competitor  $\cC^{(j)}\in \P_b(N+1)$ as
\begin{equation}\label{competitor}
\cC^{(j)}:=(G_1(\lambda)\cup (G_j(\lambda)\cap B_r),G_2(\lambda)\ldots,
G_{j-1}(\lambda),G_j(\lambda)\setminus B_r,
G_{j+1}(\lambda),\ldots, G_{N+1}(\lambda)).
\end{equation}
Fix $s\in (r,\rho).$   Arguing as in 
the proofs of \eqref{eq3191} and \eqref{set_operation12},
\begin{align*}
P(G_1(\lambda)\cup(G_j(\lambda)\cap B_r),B_s)= &
P(G_1(\lambda),B_s)+\cH^{n-1}(G_j(\lambda)\cap \p B_r) + P(G_j(\lambda),B_r)\\
&-2\cH^{n-1}(B_r \cap \p^* G_1(\lambda)\cap \p^* G_j(\lambda)),\\
P(G_j(\lambda)\setminus B_r,B_s) = & P(G_j(\lambda),B_s\setminus \overline{B_r}) +
\cH^{n-1}(G_j(\lambda) \cap \p B_r).
\end{align*}
Therefore from \eqref{good_rad}
\begin{align*}
\lim\limits_{s\to r^+} &
\Big(P(G_1(\lambda)\cup(G_j(\lambda)\cap B_r),B_s)   \\
 & +  P(G_j(\lambda)\setminus B_r,B_s)
 - P(G_1(\lambda),B_s) - P(G_j(\lambda),B_s)\Big)\\
= &  2\cH^{n-1}(G_j(\lambda)\cap \p B_r) - 2\cH^{n-1}(B_r\cap \p^*G_1(\lambda)
\cap \p^*G_j(\lambda)).
\end{align*}
Hence the inequality $\func(\cG(\lambda),\cG;\lambda)
\le \func(\cC^{(j)},\cG;\lambda)$ due to the minimality of 
$\cG(\lambda)$ and \eqref{eq:pro_distance} 
imply
\begin{equation}\label{eq:keraksiz}
\begin{aligned}
\cH^{n-1}(G_j(\lambda)\cap \p B_r) - & \cH^{n-1}(B_r\cap \p^*G_1(\lambda)
\cap \p^*G_j(\lambda))\\
\ge &  \frac{\lambda}{2} 
 \int_{G_j(\lambda)\cap B_r} \big(\sdist(y,\p G_j) - 
\sdist(y,\p G_1)\big)dy.
\end{aligned}
\end{equation}
Since by assumption $B_\rho \subseteq G_1$ (and hence $B_\rho\cap G_j =\emptyset$) we have 
\begin{equation}\label{pptop}
\sdist(y,\p G_j) - \sdist(y,\p G_1) = \dist(y,\p G_j) + \dist(y,\p G_1)\ge0 
\qquad \forall y\in G_j(\lambda)\cap B_r,
\end{equation}
and therefore 
\begin{equation}\label{eq:minimial}
 \cH^{n-1}(B_r\cap \p^*G_1(\lambda)\cap \p^*G_j(\lambda))\le 
 \cH^{n-1}(G_j(\lambda)\cap \p B_r).
\end{equation}
Then summation of \eqref{eq:minimial} over $j\in I$ and the
use of  Remark \ref{rem:boundary_neighbor}
yield
\begin{align*}
P(G_{1}(\lambda)^c,B_r)   \le  & \sum\limits_{j\in I} \cH^{n-1}(G_j(\lambda)\cap \p B_r)\le
\sum\limits_{j=2}^{N+1} \cH^{n-1}(G_j(\lambda)\cap \p B_r)\\
 = & \cH^{n-1}(G_{1}(\lambda)^c\cap \p B_r).
\end{align*}
Now adding $\cH^{n-1}(G_{1}(\lambda)^c\cap \p B_r)$  to both sides 
we get 
$$
P(G_{1}(\lambda)^c\cap B_r) \le 2 \cH^{n-1}(G_{1}(\lambda)^c\cap \p B_r).
$$
From the isoperimetric inequality, for a.e. $r\in(0,\rho)$ we obtain
\begin{equation}\label{slsls}
n\omega_n^{1/n}|G_{1}(\lambda)^c\cap B_r|^{\frac{n-1}{n}} \le 
2 \cH^{n-1}(G_{1}(\lambda)^c\cap \p B_r). 
\end{equation}
Since $x\in G_{1}(\lambda)^c,$ one has $|G_{1}(\lambda)^c\cap B_r|>0$ for any $r>0,$
therefore integrating \eqref{slsls} in $(0,\rho),$ 
we get \eqref{asda}.
\end{proof}

\begin{lemma} \label{lem:L_infty_estimate}
Given $\cG\in \P_b(N+1)$ let $\cG(\lambda)\in \P_b(N+1)$ 
be a minimizer of $\func(\cdot,\cG;\lambda).$ 
Then for any $i\in \{1,\ldots,N+1\},$
\begin{equation*}
\sup\limits_{x\in G_i(\lambda)^c\cap G_i} \dist(x,\p G_i)\le
 \frac{\sqrt{2^{n+2}n}}{\sqrt\lambda}.
\end{equation*}
\end{lemma}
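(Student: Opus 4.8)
The plan is to bootstrap Lemma \ref{lem:helping_density_e} into an $L^\infty$ bound on the signed-distance ``displacement'' by combining the density estimate \eqref{asda} with the a priori energy control $\lambda\sigma(\cG(\lambda),\cG)\le 2\Per(\cG)$ coming from comparing with the competitor $\cG$ itself (cf. the argument in the proof of Proposition \ref{asymptota}, or more directly $F(\cG(\lambda),\cG;\lambda)\le F(\cG,\cG;\lambda)=\Per(\cG)$). Fix $i$ and, arguing by contradiction, suppose there is a point $x\in G_i(\lambda)^c\cap G_i$ with $\rho:=\dist(x,\p G_i)$ large. Since $B_\rho(x)\subseteq G_i$, for every $y\in B_\rho(x)$ one has $\dist(y,\p G_i)\ge \rho-|y-x|$, so on the ball $B_{\rho/2}(x)$ the weight $\dist(\cdot,\p G_i)$ is bounded below by $\rho/2$. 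By Lemma \ref{lem:helping_density_e} applied at $x$ (with radius $\rho/2$, which also lies inside $G_i$), $|B_{\rho/2}(x)\cap G_i(\lambda)^c|\ge 2^{-n}\omega_n(\rho/2)^n$. On this set $G_i$ and $G_i(\lambda)$ differ (the set sits in $G_i$ but outside $G_i(\lambda)$, so it is contained in $A_i\Delta B_i$ with $A_i=G_i(\lambda)$, $B_i=G_i$), hence

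First I would write down the resulting lower bound on the $i$-th term of $\sigma$:
\begin{equation*}
\sigma(\cG(\lambda),\cG)\ \ge\ \int_{G_i(\lambda)\Delta G_i}\dist(y,\p G_i)\,dy\ \ge\ \int_{B_{\rho/2}(x)\cap G_i(\lambda)^c}\dist(y,\p G_i)\,dy\ \ge\ \frac{\rho}{2}\cdot\frac{\omega_n\rho^n}{2^{2n}}.
\end{equation*}
Combining this with $\lambda\sigma(\cG(\lambda),\cG)\le 2\Per(\cG)$ gives $\lambda\,\omega_n\rho^{n+1}\le 2^{2n+1}\Per(\cG)$, i.e. an upper bound for $\rho$ of the form $C(n,\cG)\lambda^{-1/(n+1)}$. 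This already yields an estimate, but with the wrong exponent and a $\cG$-dependent constant; the stated bound $\sqrt{2^{n+2}n}/\sqrt\lambda$ is dimension-only and scales like $\lambda^{-1/2}$, so the crude volume computation above is too lossy. The main obstacle is therefore to extract the sharper $\lambda^{-1/2}$ rate.

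To get the sharp rate I would instead use the density estimate as a \emph{differential} inequality rather than at a single scale. The inequality \eqref{slsls} derived inside the proof of Lemma \ref{lem:helping_density_e}, namely $n\omega_n^{1/n}|G_i(\lambda)^c\cap B_r(x)|^{(n-1)/n}\le 2\cH^{n-1}(G_i(\lambda)^c\cap\p B_r(x))$, holds for a.e. $r\in(0,\rho)$; but its derivation only used $B_\rho(x)\subseteq G_i$ together with \eqref{pptop}, and one can keep the $\lambda$-term instead of discarding it via nonnegativity. Re-examining \eqref{eq:keraksiz}--\eqref{eq:minimial}, the summed estimate reads
\begin{equation*}
P(G_i(\lambda)^c\cap B_r(x))\ \le\ 2\cH^{n-1}(G_i(\lambda)^c\cap\p B_r(x))\ -\ \lambda\int_{G_i(\lambda)^c\cap B_r(x)}\dist(y,\p G_i)\,dy,
\end{equation*}
since on $G_i(\lambda)^c\cap B_r$ (which is where $\sum_{j\in I}G_j(\lambda)\cap B_r$ lives) one has $\sdist(y,\p G_j)-\sdist(y,\p G_1)=\dist(y,\p G_j)+\dist(y,\p G_i)\ge\dist(y,\p G_i)$. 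Writing $m(r):=|G_i(\lambda)^c\cap B_r(x)|$, we have $m'(r)=\cH^{n-1}(G_i(\lambda)^c\cap\p B_r(x))$ for a.e.\ $r$, and on $B_\rho(x)$ the bound $\dist(y,\p G_i)\ge\rho-r$ gives a pointwise lower bound for the integrand. Dropping the good term $P(G_i(\lambda)^c\cap B_r)\ge0$ and plugging in $\dist\ge\rho-r$ turns this into $\lambda(\rho-r)\,m(r)\le 2m'(r)$; since also $m(r)\ge c_n m'(r)^{n/(n-1)}$-type isoperimetry forces $m(r)>0$ to grow at least like $r^n$, integrating $\tfrac{d}{dr}\log m(r)\ge \tfrac{\lambda}{2}(\rho-r)$ over $(0,\rho)$ against the lower bound $m(\rho)\le\omega_n\rho^n$ and $m(r)\sim\omega_n r^n$ for small $r$ produces $\exp(\lambda\rho^2/4)\lesssim (\text{const})$, hence $\rho^2\le (\text{const}/\lambda)\log(\text{const})$; tracking the constants carefully — using the exact isoperimetric normalization and the exact value $|B_\rho(x)|=\omega_n\rho^n$ — is what I expect to deliver the clean numerical factor $2^{n+2}n$. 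The delicate point in this step is handling the lower bound $m(r)\gtrsim r^n$ near $r=0$ rigorously (so that $\log m$ is controlled at the left endpoint); this follows from \eqref{asda} itself applied at small radii, or from the relative isoperimetric inequality together with $x\in G_i(\lambda)^c$, so the logic is circular-free but needs to be spelled out. I expect this ODE-integration bookkeeping, not the geometry, to be the real work.
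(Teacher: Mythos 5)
Your core mechanism --- re-derive the competitor inequality from the proof of Lemma \ref{lem:helping_density_e} but \emph{keep} the $\lambda$-term rather than discarding it via nonnegativity, then feed in the volume density estimate \eqref{asda} --- is exactly the right idea, and you correctly diagnose why the first (energy-comparison) attempt is too lossy. Where the paper diverges from you is in how it closes the argument: instead of turning the retained $\lambda$-term into a differential inequality $\tfrac{d}{dr}\log m(r)\gtrsim\lambda(\rho-r)$ and integrating, the paper evaluates everything at the \emph{single} radius $\rho/2$. Concretely, for $y\in B_{\rho/2}(x)$ one has $\dist(y,\p G_j)\ge\rho/2$ and $\dist(y,\p G_1)\ge\rho/2$ (you only use the second term, which costs you a factor of $2$), so the weight is $\ge\rho$; summing \eqref{eq:keraksiz} over $j$ and dropping the nonnegative interface terms gives
$\cH^{n-1}(G_1(\lambda)^c\cap\p B_{\rho/2})\ge\frac{\lambda\rho}{2}\,|G_1(\lambda)^c\cap B_{\rho/2}|$. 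The left side is trivially $\le n\omega_n(\rho/2)^{n-1}$ (full sphere area), the right side is $\ge\frac{\lambda\rho}{2}\cdot 2^{-n}\omega_n(\rho/2)^n$ by Lemma \ref{lem:helping_density_e}, and solving for $\rho$ delivers $\rho\le\sqrt{2^{n+2}n/\lambda}$ directly, with no integration. Your ODE route would indeed give the correct $\lambda^{-1/2}$ rate with a dimension-only constant (provided you fix the stray factor of $2$ --- your inequality should read $\frac{\lambda}{2}(\rho-r)m(r)\le 2m'(r)$ --- and choose a fixed inner radius $r_0=s\rho$ rather than letting $r_0\to 0$, which your own remark about the left endpoint flags), but it will produce a constant of the form $c\sqrt{n\log(1/s)}/(1-s)$, which does not reproduce the stated $\sqrt{2^{n+2}n}$; the simpler single-scale comparison to the sphere area is what gives that clean value. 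So: same engine, different and unnecessarily heavier transmission --- look for the trivial sphere-area bound before reaching for the ODE.
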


\begin{proof}
Without loss of generality we suppose $i=1.$
By contradiction, let   $x\in G_1(\lambda)^c\cap G_1$ be such that 
$\dist(x,\p G_1) \ge \rho:= \frac{\sqrt{2^{n+2}n} +\epsilon }{\sqrt\lambda}$
for some $\epsilon>0.$ We may suppose that 
$x\in \p G_1(\lambda)$ and $\epsilon$ are such that 
$$
\cH^{n-1}(\p^* G_1(\lambda)\cap \p B_\rho) = 0,
$$
where $B_\rho:=B_\rho(x).$ Then the set 
$$
J:=\{j\in\{2,\ldots,N+1\}:\,\,|B_{\rho/2}\cap G_j(\lambda)|>0\}
$$ 
is nonempty. By assumption on $x$ and $\rho,$
$B_{\rho/2}(y)\subset G_1$ for every $y\in B_{\rho/2},$ 
and hence
$$
\dist(y,\p G_j) \ge \dist(y,\p G_1) \ge \rho/2\qquad
\forall j\in J.
$$
Therefore, for each $j\in J,$ 
defining the competitor as in \eqref{competitor} with $r=\rho/2,$
from the minimality of $\cG(\lambda),$  \eqref{eq:pro_distance}
and \eqref{eq:keraksiz} 
we get 
\begin{align*}
\cH^{n-1}(G_j(\lambda)\cap &\p B_{\rho/2})  - 
\cH^{n-1}(B_{\rho/2}\cap \p^*G_1(\lambda)\cap \p^*G_j(\lambda))\\
& \ge \frac{\lambda }{2}
 \int_{G_j(\lambda)\cap B_{\rho/2}} \big(\sdist(y,\p G_j) - 
\sdist(y,\p G_1)\big)dy \ge \frac{ \lambda\rho }{2} |G_j(\lambda)\cap B_{\rho/2}|,
\end{align*}
since $\sdist(y,\p G_j) = \dist(y,\p G_j)$ and 
$\sdist(y,\p G_1) = -\dist(y,\p G_1)$ for any $y\in B_{\rho/2}.$ 
Summing these inequalities over $j\in J$ and using 
$
\bigcup\limits_{j=1}^{N+1} (G_j(\lambda)\cap B_{\rho/2})
=\bigcup\limits_{j\in J} (G_j(\lambda)\cap B_{\rho/2}) = 
G_{1}(\lambda)^c\cap B_{\rho/2} 
$
(up to a negligible set), we get
$$
\cH^{n-1}(G_{1}(\lambda)^c\cap \p B_{\rho/2}) \ge 
\sum\limits_{j\in J} \cH^{n-1}(B_{\rho/2}\cap \p^*G_1(\lambda)\cap 
\p^*G_j(\lambda)) + 
\frac {\lambda\rho}{2} |G_{1}(\lambda)^c\cap B_{\rho/2}|.
$$
Now Lemma \ref{lem:helping_density_e} yields
$$
\Big(\frac{1}{2}\Big)^{n +1 } \,\lambda\rho \omega_n 
\Big(\frac{\rho}{2}\Big)^n \le \cH^{n-1}(G_{1}(\lambda)^c\cap \p B_{\rho/2}), 
$$
and clearly, $\cH^{n-1}(G_{1}(\lambda)^c\cap \p B_{\rho/2}) 
\le n\omega_n \Big(\frac{\rho}{2}\Big)^{n-1}.$
Therefore,
$
\rho= \frac{\sqrt{2^{n+2}n}+\epsilon}{\sqrt\lambda}\le 
\frac{\sqrt{2^{n+2}n}}{\sqrt\lambda}, 
$
a contradiction, since $\epsilon>0.$
\end{proof}

The following theorem shows that if  
the components of the initial  partition 
$\cG$ are far from each other, then so are the components of minimizers of 
$\func(\cdot,\cG;\lambda),$ provided $\lambda$ is large enough.

\begin{theorem}[\bf Minimizers of $\func $ for a 
disjoint initial partition]\label{teo:disjoint_initial_part}
Suppose that $\cG\in\P_b(N+1)$ is disjoint and set
\begin{equation}\label{eq:disj_pro}
\min\limits_{1\le i<j\le N} \distance(G_i,G_j) =:\epsilon_0>0. 
\end{equation}
Then for   $\lambda > 2^{n+6}n\epsilon_0^{-2}$ any minimizer $\cG(\lambda)$
of $\func(\cdot,\cG;\lambda)$ satisfies 
\begin{equation}\label{hausdorf_nbhd}
G_j(\lambda)\subseteq (G_j)_{\epsilon_0/4}^+,\qquad j=1,\ldots,N. 
\end{equation}
\end{theorem}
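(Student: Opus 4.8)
The plan is to apply Lemma~\ref{lem:L_infty_estimate} twice. Write $\delta:=\sqrt{2^{n+2}n/\lambda}$; the hypothesis $\lambda\ge 2^{n+6}n\epsilon_0^{-2}$ is exactly the inequality $\delta\le\epsilon_0/4$, and this is all that will be used about $\lambda$.

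First I would confine every bounded component of the minimizer to the union of the $\epsilon_0/4$-neighbourhoods of the $G_j$'s. Since $\cG$ is disjoint, the closures $\overline{G_1},\dots,\overline{G_N}$ are pairwise at distance $\ge\epsilon_0$, hence $\p G_{N+1}=\bigcup_{j=1}^N\p G_j$ and $\dist(x,\p G_{N+1})=\min_{j\le N}\dist(x,G_j)$ whenever $\min_{j\le N}\dist(x,G_j)>0$. Lemma~\ref{lem:L_infty_estimate} applied to the index $N+1$ says that $x\in G_{N+1}$ together with $\dist(x,\p G_{N+1})>\delta$ forces $x\in G_{N+1}(\lambda)$; as $\delta\le\epsilon_0/4$ this yields $\{x:\min_{j\le N}\dist(x,G_j)>\epsilon_0/4\}\subseteq G_{N+1}(\lambda)$, and therefore, by disjointness of the $G_i(\lambda)$,
\begin{equation*}
G_i(\lambda)\subseteq\bigcup_{j=1}^N (G_j)_{\epsilon_0/4}^+\qquad (i=1,\dots,N),
\end{equation*}
where the neighbourhoods on the right are pairwise at distance $\ge\epsilon_0/2$.

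Next I would excise, componentwise, the stray piece. Fix $i\le N$, say $i=1$, and $j\in\{2,\dots,N\}$, and set $W:=G_1(\lambda)\cap(G_j)_{\epsilon_0/4}^+$; assume $|W|>0$, to be contradicted. By the confinement step $W\subseteq(G_j)_{\epsilon_0/4}^+$ while $G_1(\lambda)\setminus W\subseteq\bigcup_{l\ne j}(G_l)_{\epsilon_0/4}^+$, so $\distance\big(W,G_1(\lambda)\setminus W\big)\ge\epsilon_0/2>0$; hence $P(G_1(\lambda))=P(W)+P(G_1(\lambda)\setminus W)$ (and $W$ is of finite perimeter because the confinement step also gives $W=G_1(\lambda)\cap\{\dist(\cdot,G_j)\le t\}$ for every $t\in[\epsilon_0/4,\epsilon_0/2)$, the right-hand side being of finite perimeter for a.e.\ such $t$). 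Consider the competitor $\cB:=\big(G_1(\lambda)\setminus W,\,G_2(\lambda),\dots,G_N(\lambda),\,G_{N+1}(\lambda)\cup W\big)\in\P_b(N+1)$, which reassigns $W$ to the unbounded phase; by \eqref{famfor} and the previous identity $\Per(\cB)\le\Per(\cG(\lambda))$. For the distance term, every $y\in W$ has $\dist(y,\p G_1)=\dist(y,G_1)\ge\distance(G_1,G_j)-\dist(y,G_j)\ge\tfrac34\epsilon_0$, whereas $W\setminus G_{N+1}\subseteq G_1(\lambda)\cap G_j\subseteq G_j\setminus G_j(\lambda)$, so Lemma~\ref{lem:L_infty_estimate} (index $j$) together with $\p G_{N+1}=\bigcup_l\p G_l$ gives $\dist(y,\p G_{N+1})\le\dist(y,\p G_j)\le\delta\le\tfrac14\epsilon_0$ on $W\setminus G_{N+1}$. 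Since $W\subseteq G_1(\lambda)\setminus G_1$ and $W\cap G_{N+1}(\lambda)=\emptyset$, a direct computation of the two summands of $\sigma(\cdot,\cG)$ that change gives
\begin{align*}
\sigma(\cB,\cG)-\sigma(\cG(\lambda),\cG)
&\le -\int_W\dist(y,\p G_1)\,dy+\int_{W\setminus G_{N+1}}\dist(y,\p G_{N+1})\,dy\\
&\le -\tfrac34\epsilon_0|W|+\tfrac14\epsilon_0|W| = -\tfrac12\epsilon_0|W|.
\end{align*}
Thus $F(\cB,\cG;\lambda)\le F(\cG(\lambda),\cG;\lambda)-\tfrac14\lambda\epsilon_0|W|<F(\cG(\lambda),\cG;\lambda)$, contradicting minimality. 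Hence $|W|=0$ for all $j\ne i$, and with the confinement step this gives $G_i(\lambda)=G_i(\lambda)\cap(G_i)_{\epsilon_0/4}^+\subseteq(G_i)_{\epsilon_0/4}^+$.

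The one delicate point is the bound $\dist(\cdot,\p G_{N+1})\le\delta$ on $W$: this is where disjointness enters, both to identify $\p G_{N+1}$ with $\bigcup_j\p G_j$ and to place $W$ inside the $\delta$-thin shell of $\p G_j$ via the confinement step and Lemma~\ref{lem:L_infty_estimate} for the indices $j$ and $N+1$. The rest is soft — submodularity of the perimeter controls $\Per$ for free, precisely because the stray piece $W$ sits at positive distance from the rest of its phase, and the sign of the distance penalty does the rest.
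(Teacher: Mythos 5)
Your proof is correct, and it takes a genuinely different (and in one respect sounder) route than the paper's.

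The confinement step is identical to the paper's (Lemma~\ref{lem:L_infty_estimate} for $i=N+1$, then disjointness to split $(G_{N+1}^c)^+_{\epsilon_0/4}$ into the pairwise well-separated sets $(G_j)^+_{\epsilon_0/4}$). The divergence is in the excision argument. The paper reassigns a small ball $B_r$ with $\overline{B_r}\subset G_1(\lambda)\cap (G_j)^+_{\epsilon_0/4}$ to the phase $j$ and invokes the identity $P(G_1(\lambda))-P(G_1(\lambda)\setminus B_r)=P(B_r)$. But for a ball strictly inside the (Lebesgue-open) set $G_1(\lambda)$ one has $\p^*(G_1(\lambda)\setminus B_r)=\p^*G_1(\lambda)\cup\p B_r$ up to $\cH^{n-1}$-null sets, so $P(G_1(\lambda)\setminus B_r)=P(G_1(\lambda))+P(B_r)$: the sign is reversed, and with the correct sign the perimeter contribution is $-2P(B_r)$, which for a small interior ball cannot be absorbed by the $O(\lambda\epsilon_0|B_r|)$ gain in the distance term. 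Your version avoids this: by excising the \emph{entire} stray piece $W=G_1(\lambda)\cap(G_j)^+_{\epsilon_0/4}$, which sits at distance $\ge\epsilon_0/2$ from $G_1(\lambda)\setminus W$, you genuinely have $P(G_1(\lambda)\setminus W)=P(G_1(\lambda))-P(W)$, and submodularity absorbs the $+P(W)$ added to the receiving phase. So the perimeter balance works out cleanly. You also change the target phase (reassigning $W$ to the exterior phase $N+1$ rather than to phase $j$), which is why you need a \emph{second} application of Lemma~\ref{lem:L_infty_estimate}, this time for index $j$, to control $\dist(\cdot,\p G_{N+1})$ on $W\setminus G_{N+1}$; that step is correct, but it is dispensable: reassigning $W$ to $G_j(\lambda)$ instead (as in the paper), the distance bookkeeping uses only $\dist(\cdot,\p G_1)\ge\tfrac34\epsilon_0$ and $\dist(\cdot,\p G_j)\le\tfrac14\epsilon_0$ on $W\setminus G_j$, which are available from the confinement alone, giving the same $-\tfrac12\epsilon_0|W|$ bound without the extra invocation. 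In short: same confinement, different and sign-correct excision; your proof buys correctness of the perimeter estimate at the (small) cost of one more use of the $L^\infty$ lemma, a cost you could remove by reassigning to phase $j$.

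One small point worth spelling out if you write this up formally: the finite-perimeter claim for $W$ deserves a sentence, and your device ($W=G_1(\lambda)\cap\{\dist(\cdot,G_j)\le t\}$ for $t\in[\epsilon_0/4,\epsilon_0/2)$, with $\{\dist(\cdot,G_j)\le t\}$ of finite perimeter for a.e.\ $t$ by the coarea formula) is exactly the right one; alternatively, the splitting $P(G_1(\lambda))=P(W)+P(G_1(\lambda)\setminus W)$ for a decomposition into pieces at positive mutual distance is standard and can be cited directly.
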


\begin{proof}
We claim that the choice of $\lambda$ implies
\begin{equation}\label{epsilon_border}
G_{N+1}(\lambda)^c \subseteq (G_{N+1}^c)_{\epsilon_0/4}^+. 
\end{equation}
Indeed, obviously  $G_{N+1}(\lambda)^c \cap G_{N+1}^c \subseteq 
(G_{N+1}^c)_{\epsilon_0/4}^+.$ 
Now if $x\in G_{N+1}(\lambda)^c \cap G_{N+1},$ then  
$\dist(x, G_{N+1}^c) = \dist(x,\p G_{N+1})$ and therefore
by Lemma \ref{lem:L_infty_estimate} 
$$
\dist(x, G_{N+1}^c) \le 
\sup\limits_{y\in G_{N+1}(\lambda)^c \cap G_{N+1}} \dist(y, \p G_{N+1})\le 
\frac{\sqrt{2^{n+2}n}}{\sqrt\lambda} < \frac{\epsilon_0}{4}.
$$ 
Hence $x\in (G_{N+1}^c)_{\epsilon_0/4}^+$ and \eqref{epsilon_border} follows.

We prove \eqref{hausdorf_nbhd} arguing by contradiction.
Suppose for example $j=1$ and $G_1(\lambda)$ is not contained in 
$(G_1)_{\epsilon_0/4}^+.$ In view of \eqref{epsilon_border}
and \eqref{eq:disj_pro}
\begin{equation}\label{ojjjjj}
G_1(\lambda)\subseteq \bigcup\limits_{j=1}^N G_j(\lambda) \subseteq 
\Big(\bigcup\limits_{j=1}^N G_j\Big)_{\epsilon_0/4}^+= 
\bigcup\limits_{j=1}^N (G_j)_{\epsilon_0/4}^+. 
\end{equation}
Since  $G_1(\lambda)\setminus (G_1)_{\epsilon_0/4}^+ \ne \emptyset,$ 
\eqref{ojjjjj} implies 
$G_1(\lambda)\cap (G_j)_{\epsilon_0/4}^+\ne \emptyset$ 
for some $j\in \{2,\ldots,N\}.$
By virtue of Remark \ref{rem:interior_reg}  the set
$G_1(\lambda)$ can be supposed to be open so that there exists 
a ball $B_r$ of radius $r>0 $ whose closure is 
contained in $ G_1(\lambda)\cap (G_j)_{\epsilon_0/4}^+.$  
For shortness, let $j=2.$
Thus setting $\cB:=(G_1(\lambda)\setminus B_r,G_2(\lambda)\cup B_r,G_3(\lambda),
\ldots,G_{N+1}(\lambda)),$ and using 
$P(G_1(\lambda)) - P(G_1(\lambda)\setminus B_r) = P(B_r),$
we obtain 
\begin{align*}
2\func(\cG(\lambda),\cG;\lambda) - 2\func(\cB,\cG;\lambda)  = &
P(B_r)+ P(G_2(\lambda)) - P(G_2(\lambda)\cup  B_r)\\
&+ 
 \lambda\int_{B_r} \big(\sdist(x,\p G_1) - \sdist(x,\p G_2)\big)dx.
\end{align*}
Now, 
$$
P(B_r)+ P(G_2(\lambda)) - P(B_r\cup G_2(\lambda))\ge0.
$$
In addition, by the definition of $\epsilon_0,$ 
$\dist(\cdot,G_1) \ge \frac{3\epsilon_0}{4}$ in $B_r$
(thus  $\sdist(\cdot,\p G_1)=\dist(\cdot,\p G_1)$ in $B_r$);
moreover, since $B_r\subseteq (G_2)_{\epsilon_0/4}^+,$
one has 
$$
\sdist(x,\p G_1) - \sdist(x,\p G_2) \ge \frac{\epsilon_0}{4}
\qquad\forall x\in B_r
$$
and therefore  
\begin{align*}
\func(\cG(\lambda),\cG;\lambda) - \func(\cB,\cG;\lambda)  \ge &
\frac{\lambda \epsilon_0}{8} |B_r|>0. 
\end{align*}
This implies that $\cG(\lambda)$ is not a minimizer of $\func(\cdot,\cG;\lambda).$
\end{proof}

\begin{corollary}\label{cor:coming_to_ATW}
Suppose that $\cG\in \P_b(N+1)$ is disjoint and let $\epsilon_0$ 
be as in \eqref{eq:disj_pro}.
Then for $\lambda$ sufficiently large (depending only on 
$\epsilon_0$ and $n$\!),
$\cG(\lambda)$ is a minimizer of $\func(\cdot,\cG;\lambda)$ if and only if
each bounded component $G_j(\lambda),$ $j=1,\ldots,N,$ of $\cG(\lambda)$ 
is a minimizer of $\atw(\cdot,G_j;\lambda).$
Moreover, every minimizer $\cG(\lambda)$ satisfies 
\begin{equation}\label{eq:solution_disj}
\min\limits_{1\le i<j\le N} \distance(G_i(\lambda),G_j(\lambda)) \ge\epsilon_0. 
\end{equation}
\end{corollary}

\begin{proof}
By \cite[Lemma 2.1]{LS:95} 
there exists $c(n)>0$ (depending only  on $n$\!) such that  
for every $\lambda\ge1$ and every minimizer 
$A_j(\lambda),$ $j=1,\ldots,N,$  
of $\atw(\cdot,G_j;\lambda)$ one has
$$
\sup\limits_{x\in A_j(\lambda) \Delta G_j}\dist(x,\p G_j) \le 
\sqrt{ \frac{c(n)}{\lambda}}.
$$
Therefore, taking 
\begin{equation}\label{yyzzyy}
\lambda > \tilde c(n)\epsilon_0^{-2},\quad
\tilde c(n):=\max\{2^{n+6}n,16c(n)\}, 
\end{equation}
we deduce 
$A_j(\lambda) \subseteq (G_j)_{\epsilon_0/4}^+,$ 
$j=1,\ldots,N.$  

Set $\cA(\lambda) = (A_1(\lambda),\ldots, 
A_N(\lambda),\R^n\setminus \bigcup\limits_{j=1}^N A_j(\lambda)).$ 
Let us show that for
$\lambda$ as in \eqref{yyzzyy},
$\cA(\lambda)$ minimizes $\func(\cdot,\cG;\lambda).$ Indeed, take any 
minimizer $\cG(\lambda)$ of $\func(\cdot,\cG;\lambda).$ 
By Theorem \ref{teo:disjoint_initial_part} we have
$G_j(\lambda) \subseteq (G_j)_{\epsilon_0/4}^+,$  therefore
both  $(\cA(\lambda),\cG)$ and $(\cG(\lambda),\cG)$ satisfy \eqref{super_disjoint}.
Hence, \eqref{summa_ATW} and the minimality of $A_j(\lambda)$ yield
\begin{align*}
\func(\cG(\lambda),\cG;\lambda) = & 
\sum\limits_{j=1}^N \Big(P(G_j(\lambda)) + \lambda 
\int_{G_j(\lambda)\Delta G_j} \dist(x,\p G_j)dx\Big)\\
\ge & \sum\limits_{j=1}^N \Big(P(A_j(\lambda)) + \lambda 
\int_{A_j(\lambda)\Delta G_j} \dist(x,\p G_j)dx\Big) = 
\func(\cA(\lambda),\cG;\lambda).
\end{align*}
This implies that $\cA(\lambda)$ is also a minimizer $\func(\cdot,\cG;\lambda).$

Conversely, suppose that $\lambda$ satisfies \eqref{yyzzyy}  and
$\cG(\lambda)$ minimizes $\func(\cdot,\cG;\lambda)$
and let $A_j(\lambda),$ $j=1,\ldots,N,$  be a 
minimizer of $\atw(\cdot,G_j;\lambda).$
By \eqref{hausdorf_nbhd},  $A_j(\lambda) \subseteq (G_j)_{\epsilon_0/4}^+,$ 
$j=1,\ldots,N.$ Set $\cA(\lambda) = (A_1(\lambda),\ldots, 
A_N(\lambda),\R^n\setminus \bigcup\limits_{j=1}^N A_j(\lambda)).$  
Then from the minimality of $A_j(\lambda)$ and  $\cG(\lambda),$ as well as
\eqref{summa_ATW}, we deduce
\begin{align*}
\func(\cG(\lambda),\cG;\lambda) \le  & \func(\cA(\lambda),\cG;\lambda) =
\sum\limits_{j=1}^N \Big(P(A_j(\lambda)) + \lambda 
\int_{A_j(\lambda)\Delta G_j} \dist(x,\p G_j)dx\Big)\\
\le & \sum\limits_{j=1}^N \Big(P(G_j(\lambda)) + \lambda 
\int_{G_j(\lambda)\Delta G_j} \dist(x,\p G_j)dx\Big)= 
\func(\cG(\lambda),\cG;\lambda).
\end{align*}
Thus all inequalities are in fact equalities, which is possible
if and only if 
$$
P(G_j(\lambda)) + \lambda 
\int_{G_j(\lambda)\Delta G_j} \dist(x,\p G_j)=
P(A_j(\lambda)) + \lambda 
\int_{A_j(\lambda)\Delta G_j} \dist(x,\p G_j)dx
$$
for any $j=1,\ldots,N.$
Hence, $G_j(\lambda)$ is a minimizer  of $\atw(\cdot, G_j;\lambda).$

Finally, \eqref{eq:solution_disj} directly follows from 
Corollary \ref{cor:disjoint_comparison}.
\end{proof}

\begin{theorem}[\bf Evolution of disjoint partitions]\label{teo:disjoint_evolution}
Assume that $\cG\in \P_b(N+1)$ is disjoint, 
and $\{\cM\}=\{(M_1,\ldots,M_{N+1})\} \in GMM(\func, \cG).$ 
Then $M_i\in GMM(\atw,\allowbreak G_i)$ for any $i=1,\ldots,N.$ In particular, 
there exists $C(n)>0$ such that 
\begin{equation}\label{half_holder}
|\cM(t)\Delta \cM(t')| \le C(n)\,\Per(\cG)\,|t-t'|^{1/2}\qquad 
\forall t,t'>0,\quad |t-t'|<1. 
\end{equation}
\end{theorem}

\begin{proof}
Let $\epsilon_o>0$ be defined as in \eqref{eq:disj_pro}
and take  $c_o:=c_o(n,\epsilon_o)$  
so that Corollary \ref{cor:coming_to_ATW} holds for $\lambda>c_o.$

Let $\cM\in GMM(\func,\cG)$ and let 
\begin{equation}\label{convergencs}
\lim \limits_{l\to+\infty} |\cM(t)\Delta \cL(\lambda_l,[\lambda_lt])| = 
\sum\limits_{i=1}^{N+1} \lim \limits_{l\to+\infty} 
|M_i(t)\Delta L_i(\lambda_l,[\lambda_lt])|=0,\qquad t\ge0, 
\end{equation}
where $\cL(\lambda,k)$ is defined as $\cL(\lambda,0):=\cG,$  and 
$\cL(\lambda,k),$ $k\ge1,$ is a solution of 
$$
\min\limits_{\cA\in \P_b(N+1)} \func(\cA,\cL(\lambda,k-1);\lambda),
$$
and $\{\lambda_l\}_{l\in\N}$ is a diverging sequence.
By induction on $k\ge1,$ and by Corollary \ref{cor:coming_to_ATW},
one can show that 
\begin{equation}\label{eq:disk_disj_M}
\min\limits_{1\le i<j\le N} \distance(L_i(\lambda,k),L_j(\lambda,k)) \ge\epsilon_0  
\end{equation}
for all $\lambda > c_o$  and $k\ge1.$
Therefore, by virtue of Corollary \ref{cor:coming_to_ATW}, for every $k\ge1$
and $\lambda>c_o,$ each $L_i(\lambda,k),$ $i=1,\ldots,N,$  
minimizes $\atw(\cdot,L_i(\lambda,k-1);\lambda).$ Moreover,
from \eqref{convergencs} we obtain
$$
\lim \limits_{l\to+\infty} 
|M_i(t)\Delta L_i(\lambda_l,[\lambda_lt])|=0,\qquad t\ge0. 
$$
Since $L_i(\lambda,0) = G_i,$
from Definition \ref{def:GMM} we obtain $M_i\in GMM(\atw, G_i).$
\smallskip 

Finally, by \cite{BH:2016,LS:95}, there exists 
$C(n)>0$ such that each $M_i\in GMM(\atw,G_i),$ $i=1,\ldots,N,$
satisfies 
\begin{equation}\label{sssfghh}
|M_i(t)\Delta M_i(t')|\le C(n) \,P(G_i)\,|t-t'|^{1/2}\qquad \forall t,t'>0,\,\,
|t-t'|<1.
\end{equation}
Now \eqref{half_holder} follows summing \eqref{sssfghh}
in $i=1,\ldots,N,$ and using 
$
|\cA\Delta\cB| \le 2\sum\limits_{i=1}^N|A_i\Delta B_i|.
$
\end{proof}

\begin{remark}
Let $M_i\in GMM(\atw,G_i),$ $i=1,\ldots,N,$  and 
$\{\lambda_l\}$ be a diverging sequence such that
\begin{equation}\label{compo_convo}
\lim\limits_{l \to+\infty} |M_i(t)\Delta L_i(\lambda_l,[\lambda_l t])| 
= 0,\qquad t\ge0, 
\end{equation}
where $L_i(\lambda,k)$ is defined as $L_i(\lambda,0):=G_i$ and $L_i(\lambda,k),$ $k\ge1,$
is a solution of 
$$
\min\limits_{A\in BV(\R^n,\{0,1\})} \atw(A,L_i(\lambda,k-1);\lambda).
$$
Applying an induction argument on 
$k$ and Corollary \ref{cor:disjoint_comparison},
we establish \eqref{eq:disk_disj_M} for all $\lambda\ge c_o$ and $k\ge1.$
Therefore, again an induction argument on $k\ge1$ 
and Corollary \ref{cor:coming_to_ATW} imply that the partition 
$\cL(\lambda,k)$ defined for such $\lambda$ and $k$ as 
$$
\cL(\lambda,k):=\Big(L_1(\lambda,k),\ldots, L_N(\lambda,k),\R^n\setminus 
\bigcup\limits_{i=1}^N L_i(\lambda,k) \Big)
$$
minimizes $\func(\cdot,\cL(\lambda,k);\lambda).$
Finally, if we denote by $\cM$ the partition whose bounded 
components are $M_i,$ $i=1,\ldots,N,$ then 
by \eqref{compo_convo},
\begin{align*}
\limsup\limits_{l\to+\infty}  |\cL(\lambda_l,[\lambda_lt])\Delta \cM(t)|\le 
2\sum\limits_{i=1}^N
\lim\limits_{l \to+\infty} |M_i(t)\Delta L_i(\lambda_l,[\lambda_l t])| 
= 0,\qquad t\ge0, 
\end{align*}
and hence  $\cM\in GMM(\func,\cG).$
\end{remark}

Now we are in a position to prove Theorem \ref{teo:consistency_intro}.

\begin{proof}
a) follows combining \cite[Theorem 7.4]{ATW93} 
and Theorem \ref{teo:disjoint_evolution}, 
whereas b) is a consequence of Theorem \ref{teo:disjoint_evolution}  and 
\cite[Theorem 4]{Ch:2005}.
\end{proof}

One can say more about the evolution of convex disjoint partitions.

\begin{definition}[\bf Convex disjoint partitions]\label{def:convex_disjoint}
A disjoint partition $\cA\in \P_b(N+1)$  is called convex if the bounded
components of $\cA$ are convex. 
\end{definition}

We define the Hausdorff distance between  two
partitions $\cA,\cB\in\P_b(N+1)$ as
$$
\HD(\cA,\cB): =\sum\limits_{i=1}^N \HD(A_i,B_i), 
$$
where $\HD(A_i,B_i)$ denotes the Hausdorff 
distance between $A_i$ and $B_i$.

\begin{theorem}[\bf Evolution and stability of convex 
disjoint partitions]\label{teo:convex}
Assume that $\cC\in \P_b(N+1)$ is disjoint and convex. 
Then 
$$GMM(\func, \cC)=\{\cM\}=\{(M_1,\ldots,M_{N+1})\}$$ 
is a singleton. 
In particular, for any $i,j\in\{1,\ldots,N\},$ $i\ne j,$  the function 
\begin{equation}\label{sssdff}
t\in [0,\min\{ t_i^\dagger, t_j^\dagger\})\mapsto 
\distance(M_i(t),M_j(t))  
\end{equation}
is nondecreasing. 
Moreover, for any $i=1,\ldots,N,$ $M_i(\cdot)$ 
agrees with the classical mean curvature flow starting from 
$C_i$ up to its extinction time $t_i^\dag$  \cite{Hui:84}.
Finally, if the sequence $\{\cG^{(h)}\}\subset\P_b(N+1)$ 
converges  to $\cC$ in the Hausdorff distance $\HD$ as $h\to+\infty,$ then 
for any $\cM^{(h)} \in  GMM(\func,\cG^{(h)}),$
\begin{equation*}
\lim\limits_{h\to+\infty} \HD(\cM^{(h)}(t),\cM(t)) = 
0 \qquad\forall t\in 
[0,\min\limits_{i\le N}t_i^\dagger).
\end{equation*}
\end{theorem}

\begin{proof}
The first part of the theorem follows 
from Theorem \ref{teo:disjoint_evolution}
and \cite[Corollary 5]{BCCN:05}. 
Before proving the second part of the theorem,
 we show the following stability property of convex sets.
\\[1mm]
{\noindent
{\it Claim.} Let $C\subset\R^n$ be a nonempty 
bounded convex set and let 
$\{G^{(h)}\}$ be a sequence of sets of finite perimeter 
converging to $C$ in the Hausdorff distance as 
$h\to+\infty.$ Then 
\begin{equation}\label{eeeerrrr}
G^{(h)}(t)\overset {\HD}{\to} C(t),\quad t\in [0,t_C^\dag),
\end{equation}
where $G^{(h)}(t)$ and $C(t)$ are Almgren-Taylor-Wang solutions 
starting from $G^{(h)}$ and $C$ respectively (recall that $C(\cdot)$ is 
unique \cite[Corollary 5]{BCCN:05}),  and $t_C^\dag$  is 
the extinction time of $C.$ }

Indeed, consider arbitrary 
sequences $\{A^{(l)}\},$ $\{B^{(l)}\}$ of nonempty bounded 
convex sets such that 
$A^{(l)}\strictlyincluded C \strictlyincluded B^{(l)},$ $l\ge1,$ 
and $A^{(l)},B^{(l)}\overset{\HD}{\to} C$ as $l\to+\infty.$
Then for any $l\ge1,$  there exists $h_l\in\N$ such that 
$A^{(l)}\subseteq G^{(h)}\subseteq B^{(l)}$ for any $h>h_l.$
We may suppose that $h_l\to+\infty$ as $l\to+\infty.$
Let $A^{(l)}(t)$  (resp. $B^{(l)}(t)$)  be the minimizing movement 
starting from $A^{(l)}$  (resp. $B^{(l)}$)  for the 
Almgren-Taylor-Wang functional \eqref{eq:standard_ATW}
and $G^{(h)}(t)^*$ and $G^{(h)}(t)_*$ 
be the maximal and minimal $GMM$\!s (Definition \ref{def:min_max_GMM})
for \eqref{eq:standard_ATW} starting from $G^{(h)},$  
so that ${G^{(h)}}(t)_*\subseteq G^{(h)}(t)\subseteq {G^{(h)}}(t)^*$
for all $t\ge0.$  By  Theorem \ref{teo:comparing_GMM},
$A^{(l)}(t) \subseteq {G^{(h)}}(t)_*$ and 
${G^{(h)}}(t)^* \subseteq B^{(l)}(t)$ for any $t\ge0$ and $h> h_l.$
Moreover, from \cite[Theorem 12]{BCCN:05}  
we have $A^{(l)}(t),B^{(l)}(t)\overset{\HD}{\to} C(t)$
as $l\to+\infty$ for any $t\in [0,t_C),$ and since $h_l\to+\infty,$
\eqref{eeeerrrr} follows.

Now we prove the second part of Theorem \ref{teo:convex}. 
Since the partition $\cC$ is disjoint and $\HD(\cG^{(h)},\cC)\to0$ 
as $h\to+\infty,$ one has that $\cG^{(h)}$ is also disjoint provided 
$h$ is large enough.  Let $\cM^{(h)} \in GMM(\func,\cG^{(h)});$
by Theorem \ref{teo:disjoint_evolution} 
$M_i^{(h)} \in GMM(\atw,G_i^{(h)}),$ $i=1,\ldots,N,$ and 
therefore by virtue of $G_i^{(h)}\overset{\HD}{\to} C_i$ and 
the previous claim, 
$M_i^{(h)}(t)\overset{\HD}{\to} M_i(t),$ $i=1,\ldots,N,$  as $h\to+\infty$
for any $t\in[0,t_i^\dag).$
%
%
\end{proof}

\subsection*{Acknowledgements}
The first author is 
partially supported by GNAMPA of INdAM.

\end{document}